\documentclass[11pt,a4paper]{article}

\usepackage[T1]{fontenc}
\usepackage[utf8]{inputenc}
\usepackage{lmodern}
\usepackage[margin=1in]{geometry}

\usepackage{amsmath,amssymb,mathtools}
\usepackage{amsthm}
\usepackage{mathrsfs}
\usepackage{xcolor}

\usepackage{graphicx}
\usepackage{subcaption}
\usepackage{float}

\usepackage{algorithm}
\usepackage[noend]{algpseudocode}

\usepackage{centernot}
\usepackage{ragged2e}
\usepackage{booktabs}
\usepackage{tabularx}
\usepackage{array}

\usepackage{tikz}
\usetikzlibrary{arrows.meta,positioning,fit,calc,backgrounds}

\usepackage[authoryear]{natbib}
\usepackage[colorlinks=true,linkcolor=blue,citecolor=blue,urlcolor=blue]{hyperref}
\usepackage{orcidlink}
\usepackage[nameinlink,capitalize]{cleveref}

\newcolumntype{L}[1]{>{\RaggedRight\arraybackslash}p{#1}}
\newcolumntype{C}[1]{>{\centering\arraybackslash}p{#1}}

\newtheorem{theorem}{Theorem}
\newtheorem{proposition}[theorem]{Proposition}
\newtheorem{example}{Example}
\newtheorem{remark}{Remark}
\newtheorem{lemma}{Lemma}
\newtheorem{corollary}{Corollary}
\newtheorem{definition}{Definition}

\title{A lattice-theoretic framework for hesitant fuzzy convexity beyond scalar observables}

\author{
Carlos Salvatierra\thanks{All authors contributed equally to this work. Corresponding author: \href{mailto:carlosss@ugr.es}{carlosss@ugr.es}.}\\
Department of Algebra, University of Granada, Granada 18071, Spain
\and
Pedro Huidobro\\
Department of Statistics and O.R., University of Oviedo, Oviedo 33003, Spain
\and
Raquel Fernandez-Peralta\\
Mathematical Institute, Slovak Academy of Sciences, Bratislava, Slovakia
}

\date{}

\begin{document}

\maketitle

\begin{abstract}
In fuzzy-set theory, the notion of convexity has often been formulated through scalar reductions, typically via scores and aggregation functions. Although useful, such reductions may obscure relevant order-theoretic information in the codomain, especially in complex set-valued settings. This article develops a lattice-theoretic framework for convexity on lattice-valued mappings over point-convex segment-generated abstract convexity spaces. The framework separates the segment structure of the domain from the lattice structure of the codomain, distinguishing intrinsic convexity, defined through the lattice meet, from relational and observable convexities induced by preorders or scalar maps. The article characterizes when scalar observables preserve or reconstruct intrinsic convexity, and which codomain operators preserve it through their pointwise extensions. It then specializes the framework to hesitant fuzzy sets endowed with the symmetric lattice, recovering classical fuzzy and interval-valued convexities as natural restrictions. The structural result shows that the symmetric order cannot be represented by any finite family of scalar observables. This obstruction appears even on two-point typical hesitant fuzzy elements. Consequently, symmetric hesitant convexity cannot, in general, be reconstructed by any finite family of scalar observables. Moreover, every finite family of monotone scalar observables admits a three-point hesitant profile that is scalar-convex for all selected descriptions but not symmetrically convex.

\end{abstract}

\noindent\textbf{Keywords:} Fuzzy convexity; L-fuzzy sets; Hesitant fuzzy sets; Symmetric order; Scalar observables

\section{Introduction}

Convexity is one of the basic structural notions in several branches of mathematics, including optimization, analysis, geometry, and decision theory. In fuzzy set theory, convexity emerged naturally from the seminal work of Zadeh \citep{Zadeh1965}. Several extensions of the concept have been proposed since then, in order to adapt it to richer models of uncertainty, such as interval-valued fuzzy sets (IVFSs) \citep{Sambuc75} or hesitant fuzzy sets (HFSs) \citep{Torra2010}. However, many of these proposals have been formulated in broad terms, without explicitly accounting for the specific structural features of the fuzzy-set model under consideration. Thus, a natural question arises: from a structural perspective, are the existing notions of convexity equally appropriate for all fuzzy-set models?

\smallskip

A substantial part of the literature on fuzzy convexity has traditionally focused on scalar descriptions of it. For instance, in the classical setting of \citet{Zadeh1965}, min-based fuzzy convexity was introduced for comparing membership degrees directly in the unit interval through the minimum operator. \citet{Diaz2017} generalized this idea by introducing an operator-based viewpoint through the notion of \(F\)-convexity, which replaces the minimum with a general operator \(F:[0,1]^2\to[0,1]\). In turn, in the hesitant fuzzy setting, \citet{RashidBeg2016} introduced a scalar-based notion of convexity for typical hesitant fuzzy sets (THFSs), namely fuzzy sets whose membership degrees are finite nonempty subsets of \([0,1]\), commonly called typical hesitant fuzzy elements (THFEs) \cite{Bedregal2014}. Their proposal relied on the arithmetic mean score function introduced by \citet{XuXia2011}. However, \citet{JanisMontesRencova2018} showed that this proposal was not stable under intersections in general, and \citet{Huidobro2021} addressed this limitation by proposing a general aggregation-based approach.

\smallskip

A different line of research has avoided the former preliminary scalarization by relying directly on order comparisons. In this approach, endpoint and intermediate values are compared through a suitable order on the codomain, rather than through a score or an aggregation function. Order-based convexities have been studied in particular for IVFSs \citep{Huidobro2022} and for THFSs \citep{Huidobro2025}.

\smallskip

Thus, the existing literature reveals two complementary lines of development: a scalar or observable approach, based on numerical summaries, and an order-theoretic approach, based on direct comparison. One of the aims of this article is to place both viewpoints within a common framework that may help determine when scalar observables can faithfully preserve the intrinsic order-theoretic structure. This clarification is especially relevant in set-valued contexts, where membership degrees are nonempty subsets of \([0,1]\) rather than single numbers. In such settings, the difficulty of working with suitable lattice structures has often encouraged the use of scores and aggregation functions, even though these scalar summaries may conceal structurally relevant information.

\smallskip

To address this issue, we study convexity on lattice-valued mappings over point-convex segment-generated abstract convexity spaces. This setting allows us to separate the geometric structure of the domain from the order-theoretic structure of the codomain, while recovering as particular cases ordinary segment convexity in real vector spaces, interval convexity on linearly ordered universes, and more general non-linearly ordered betweenness structures. On the codomain side, taken throughout to be a lattice $L$, we distinguish intrinsic convexity, defined through the lattice meet, from relational and observable convexities induced by preorders or scalar maps. We then determine when scalar observables preserve or reconstruct intrinsic convexity, and which codomain operators \(T:L^m\to L\) preserve it through their pointwise extensions on \(L^X\). These results show that preservation by scalar observables is governed by compatibility with finite meets as minima, exact scalar reconstruction requires order reflection, and preservation under pointwise induced operators depends on compatibility with the meet operation of the codomain lattice. The latter result follows the general framework of induced operators on lattices developed by \citet{LobilloMerinoNavarroSantos2021} and \citet{MerinoNavarroSantos2022}.

\smallskip

The main structural development of the article is obtained by specializing the previous framework to the symmetric lattice of HFSs. The relevance of this choice is reinforced by \citet{Rodriguez2016Position}, who identified as a foundational challenge the construction of HFS operations that recover the classical behaviour on standard fuzzy extensions while providing a lattice structure. In our setting, this structural perspective allows us to recover classical fuzzy and interval-valued convexities as natural restrictions of a common hesitant formulation. More importantly, by combining a reconstruction theorem for families of observables with an order-dimension obstruction, we show that the symmetric order contains finite subposets of arbitrarily large order dimension already within the class of two-point THFEs. Consequently, symmetric hesitant convexity cannot, in general, be globally reconstructed by any finite family of scalar observables. More concretely, every such family admits a three-point THFS profile that is scalar-convex for all selected descriptions but not symmetrically convex. This obstruction shows that the symmetric order cannot be globally recovered from finitely many scalar observables, in sharp contrast with chain-valued settings.

\smallskip

The article makes four main contributions. First, we formulate a lattice-theoretic framework for fuzzy convexity over point-convex segment-generated abstract convexity spaces, distinguishing an operational perspective based on codomain operators from a relational perspective based on preorders and scalar observables. This formulation provides a common perspective on several existing approaches to fuzzy convexity. Second, we establish the conditions under which scalar observables preserve or reconstruct intrinsic lattice convexity, and clarify its relation with lattice-order convexity. Third, we characterize the codomain operators whose pointwise extensions preserve intrinsic convexity. Finally, we specialize the framework to HFSs endowed with the symmetric lattice of \citet{JMNS22} and derive the finite scalar non-representability results that constitute the central structural contribution of the article.

\smallskip

The structure of this article is as follows. Section~\ref{sec:preliminaries} provides the necessary background and reviews the main convexity viewpoints introduced in the literature. Section~\ref{sec:convexity} introduces the operational and relational convexity perspectives on \(L\)-fuzzy sets. Section~\ref{section:bridges} studies the bridges between intrinsic and scalarized convexity. Section~\ref{sec:pointwise} characterizes the pointwise operators that preserve intrinsic convexity. Section~\ref{sec:hesitant} develops the hesitant specialization through the symmetric lattice and establishes the finite scalar non-representability results. Finally, Section~\ref{sec:conclusions} concludes the article and outlines future research directions.


\section{Preliminaries}\label{sec:preliminaries}

Throughout this article, we use the following mathematical background. A \emph{partially ordered set}, or \emph{poset}, is a pair \((P,\leq)\), where \(\leq\) is reflexive, antisymmetric, and transitive. If antisymmetry is omitted, one obtains a \emph{preordered set}. A preorder or order is called \emph{total} if, for any \(a,b\in P\), either \(a\leq b\) or \(b\leq a\). Given a poset \((P,\leq)\), a subset \(U\subseteq P\) is called an \emph{up-set} if \(a\in U\) and \(a\leq b\) imply \(b\in U\).

\smallskip

A \emph{lattice} is a poset \((L,\leq)\) such that every pair of elements \(a,b\in L\) has an infimum \(a\wedge b\) and a supremum \(a\vee b\). A lattice is \emph{bounded} if it has a least element \(0\) and a greatest element \(1\). It is \emph{complete} if every subset \(S\subseteq L\) has both an infimum \(\bigwedge S\) and a supremum \(\bigvee S\). In particular, every complete lattice is bounded, since \(\bigvee\varnothing\) and \(\bigwedge\varnothing\) are, respectively, the least and greatest elements of \(L\).

\smallskip

In this article, we work inside the general framework of lattice-valued fuzzy sets. If \((L,\leq)\) is a lattice and \(X\) is a universe, we denote by \(L^X\) the class of all mappings from \(X\) to \(L\). A map \(A\in L^X\) is called an \(L\)-fuzzy set \citep{Goguen1967}. The pointwise order on \(L^X\) is defined by
\[
A\sqsubseteq B \iff A(x)\leq B(x)\ \text{for all }x\in X,
\]
and the pointwise lattice operations are given by
\[
(A\sqcap B)(x):=A(x)\wedge B(x),
\qquad
(A\sqcup B)(x):=A(x)\vee B(x),
\qquad x\in X.
\]
For instance, taking \(L=[0,1]\), endowed with the usual order, one recovers classical fuzzy sets on \(X\) \citep{Zadeh1965}, i.e., a fuzzy set is a mapping \(A:X\to[0,1]\). Likewise, by choosing \(L=I([0,1])\), the family of closed intervals of \([0,1]\) endowed with the usual interval order, one obtains standard interval-valued fuzzy sets \citep{Bustince2016,GrattanGuinness1976,Jahn1975,Sambuc75,Zadeh1971}.


\subsection{Related work on fuzzy convexity}

\smallskip

The development of fuzzy convexity can be traced back to the classical notion of convexity for crisp subsets of a linear space \citep{Klee1971}. A subset \(C\) of a linear space \(X\) is convex if, for every \(x,y\in C\) and every \(\lambda\in[0,1]\), one has \(\lambda x+(1-\lambda)y\in C\). Given a fuzzy set \(A:X\to[0,1]\), one possible arithmetic way to transfer this idea is to impose the Jensen-type condition considered, for instance, by \citet{AmmarMetz1992},
\begin{equation}\label{eq:jensen-fuzzy-convexity}
A(\lambda x+(1-\lambda)y)
\geq
\lambda A(x)+(1-\lambda)A(y).
\end{equation}
Although natural, this formulation depends on the arithmetic structure of the codomain and is therefore not directly available in general lattice-valued settings.

\smallskip

The standard cut-compatible formulation in fuzzy convexity is the min-based condition introduced by \citet{Zadeh1965} in the classical fuzzy setting,
\begin{equation}\label{eq:min-based-fuzzy-convexity}
A(\lambda x+(1-\lambda)y)
\geq
\min\{A(x),A(y)\}.
\end{equation}
This definition is stable under intersections and is equivalent to the convexity of all \(\alpha\)-cuts,
\begin{equation}\label{eq:alpha-cuts-classical}
[A]_\alpha=\{x\in X:A(x)\geq \alpha\},
\qquad \alpha\in(0,1].
\end{equation}
This cut-compatible viewpoint became one of the basic foundations of convex fuzzy-set theory; see, for instance, the systematic treatment of \citet{Lowen1980}.

\smallskip

A direct lattice-valued counterpart of this min-based condition was considered by \citet{Maruyama2009} in a Euclidean context. For a completely distributive lattice \(L\), an \(L\)-fuzzy set \(A:\mathbb{R}^n\to L\) is said to be \(L\)-fuzzy convex when
\begin{equation}\label{eq:maruyama-l-fuzzy-convexity}
A(\lambda x+(1-\lambda)y)
\geq
A(x)\wedge A(y)
\end{equation}
for all \(x,y\in\mathbb{R}^n\) and every \(\lambda\in[0,1]\). Moreover, \citet{Maruyama2009} showed that this condition is equivalent to the convexity of all lattice-valued level cuts
\(
\{x\in\mathbb{R}^n:a\leq A(x)\},
~ a\in L.
\)
This places the classical cut-compatible viewpoint within a meet-based lattice-valued setting and anticipates the intrinsic convexity that we consider below.

\smallskip

An operator-oriented generalization of min-based convexity was introduced by \citet{Diaz2017} through the notion of \(F\)-convexity. Given a binary operator \(F:[0,1]^2\to[0,1]\), a fuzzy set \(A:X\to[0,1]\) is said to be \(F\)-convex when
\begin{equation}\label{eq:F-convexity}
A(y)\geq F(A(x),A(z))
\end{equation}
for all \(x\leq y\leq z\), \(x,y,z\in X\). The usual min-based fuzzy convexity is recovered when \(F=\min\). This approach shifts the focus from a fixed convexity inequality to the choice of a codomain operator.

\smallskip

In hesitant fuzzy settings, the study of convexity has mainly focused on scalar summaries of typical hesitant information. \citet{RashidBeg2016}, for instance, proposed a score-based approach for typical hesitant fuzzy sets (THFSs). We denote by \(F^*([0,1])\) the family of all nonempty finite subsets of \([0,1]\). The membership degrees of a THFS are elements of \(F^*([0,1])\), commonly called typical hesitant fuzzy elements (THFEs) \cite[Def.~3.1]{Bedregal2014}. Given a THFS \(A:X\to F^*([0,1])\), they used the arithmetic mean score function \citep{Farhadinia2014,XuXia2011}
\begin{equation}\label{eq:arithmetic-mean-score}
s_A(x)=\frac{1}{|A(x)|}\sum_{\gamma\in A(x)}\gamma,
\end{equation}
and defined convexity by
\begin{equation}\label{eq:score-based-thfs-convexity}
s_A(\lambda x+(1-\lambda)y)
\geq
\min\{s_A(x),s_A(y)\}.
\end{equation}

This definition preserves a cut-based interpretation when the cuts are given by \([A]^s_\alpha=\{x\in X:s_A(x)\geq \alpha\}\). However, \citet{JanisMontesRencova2018} showed that it is not stable under the usual intersection of THFSs. This loss of stability is relevant, since closure under intersections is a basic structural property of the classical fuzzy approach.

\smallskip

To address this drawback, \citet{JanisMontesRencova2018} introduced a max-based notion of convexity for THFSs. Given a THFS \(A\), and \(A_{\max}(x):=\max A(x)\), the corresponding condition is given by
\begin{equation}\label{eq:max-based-thfs-convexity}
A_{\max}(\lambda x+(1-\lambda)y)
\geq
\min\{A_{\max}(x),A_{\max}(y)\}.
\end{equation}

The aggregation-based approach of \citet{Huidobro2021} subsequently generalized this idea. Given an aggregation function \(\mathcal S\), a THFS A over a totally ordered universe is said to be \(\mathcal S\)-convex if
\begin{equation}\label{eq:aggregation-based-thfs-convexity}
\mathcal S(A(y))
\geq
\min\{\mathcal S(A(x)),\mathcal S(A(z))\}
\end{equation}
for every \(x\leq y\leq z\),~~\(x,y,z\in X\). This notion recovers the arithmetic mean score approach of Rashid and Beg and the max-based approach as particular cases. However, all these formulations still reduce each typical hesitant value to a scalar quantity.

\smallskip

To avoid these preliminary reductions, some order-based approaches have also been proposed. These attempts have focused, in particular, on IVFSs \cite{Huidobro2022} and on THFSs \cite{Huidobro2025}. In these, convexity is imposed directly through a codomain order \(\preccurlyeq\). Thus, given a totally ordered universe \(X\), for an IVFS or a THFS \(A\), the condition is of the form
\begin{equation}\label{eq:order-based-convexity-literature}
A(x)\preccurlyeq A(y)
\quad\text{or}\quad
A(z)\preccurlyeq A(y),
\qquad x<y<z, \qquad x,y,z\in X.
\end{equation}
In other words, the intermediate value must dominate at least one endpoint value with respect to the chosen order. In this way, the comparison is carried out directly at the level of the codomain values, rather than after a scalar aggregation.

\smallskip

Most of the previous notions are formulated either on real vector spaces 
or on linearly ordered universes. Although these domains may be natural in certain concrete contexts, they become restrictive whenever the relevant notion of intermediate point is not encoded by linear combinations or by a total order, as occurs, for instance, in graph-like, geodesic, or more general betweenness structures. This is the reason why we work inside point-convex segment-generated abstract convexity spaces instead. This setting allows us to treat the classical segment \(\lambda x+(1-\lambda)y\), the order interval \([x,z]\), and other betweenness structures through a common segment mechanism.

\smallskip

A complementary structural perspective on lattice-valued convexity has been developed through the theory of \(L\)-convex spaces. In particular, \citet{PangShi2017} studied \(L\)-convex structures generated from convexity of level cuts. This viewpoint is related to the cut-based characterization of intrinsic convexity considered later in this article. Our focus, however, is different. Rather than constructing or classifying convex structures on \(L^X\), we fix a segment-generated abstract convexity on the domain and study concrete convexity notions for individual lattice-valued mappings from \(X\) to \(L\), together with their comparison, preservation, and reconstruction properties. 



\section{Two convexity perspectives on \(L\)-fuzzy sets}\label{sec:convexity}

In this section, we introduce the two new convexity schemes. The purpose is to separate two ingredients that are often mixed in fuzzy convexity: the segment structure of the domain and the order-theoretic structure of the codomain. We begin by fixing the abstract notion of segment used on the domain.

\smallskip

Following \citep{vandeVel1993}, an \emph{abstract convexity space} is a pair \((X,\Gamma)\), where \(X\) is a nonempty set and \(\Gamma\subseteq\mathcal P(X)\) is a family of subsets of \(X\), called convex sets, such that \(\varnothing,X\in\Gamma\), \(\Gamma\) is closed under arbitrary intersections, and \(\Gamma\) is closed under nested unions. For every \(S\subseteq X\), its convex hull is defined by
\begin{equation*}
\operatorname{co}_{\Gamma}(S):=\bigcap\{C\in\Gamma:S\subseteq C\}.
\end{equation*}

In this article, we restrict our attention to \emph{point-convex segment-generated abstract convexity spaces}. More precisely, we assume that \(\operatorname{co}_{\Gamma}(\{x\})=\{x\}\) for every \(x\in X\), and that a subset \(C\subseteq X\) belongs to \(\Gamma\) if and only if
\begin{equation*}
x,z\in C \Longrightarrow \operatorname{co}_{\Gamma}(\{x,z\})\subseteq C.
\end{equation*}
We write
\begin{equation*}
[x,z]_{\Gamma}:=\operatorname{co}_{\Gamma}(\{x,z\}).
\end{equation*}
Thus, in a segment-generated setting, a subset \(C\subseteq X\) is convex if and only if \(x,z\in C\) imply \([x,z]_{\Gamma}\subseteq C\). This agrees with the interval-convexity viewpoint in betweenness structures \citep{AndersonBankstonMcCluskey2021}, where convexity is determined by requiring that every interval between two points of a set remains inside the set. For instance, this setting includes the linearly ordered universes usually considered in the literature, by taking \([x,z]_{\Gamma}\) as the order interval between \(x\) and \(z\), and real vector spaces, by taking \([x,z]_{\Gamma}\) as the usual segment joining \(x\) and \(z\). It also allows non-linearly ordered domains, as illustrated below. From now on, \((X,\Gamma)\) will denote a point-convex segment-generated abstract convexity space.

\begin{remark}\label{rem:why-segment-generated}
The segment-generated assumption ensures that the local conditions formulated through points \(y\in[x,z]_{\Gamma}\) characterize \(\Gamma\)-convexity. Without this assumption, they would only guarantee closure under two-point segments, which need not imply membership in \(\Gamma\) for a general abstract convexity space. The point-convex condition ensures that \([x,x]_{\Gamma}=\{x\}\) for every \(x\in X\).
\end{remark}


\subsection{Operational perspective: \((L,\otimes)\)-convexity}

\smallskip

The first perspective of convexity is operational. This vision abstracts the role played by the codomain operator in min-based fuzzy convexity and in its operator-based generalizations.

\begin{definition}\label{defLconvex}
Let \((X,\Gamma)\) be a point-convex segment-generated abstract convexity space, let \((L,\leq)\) be a partially ordered set, and let \(\otimes:L\times L\to L\) be a binary operator. A mapping \(A:X\to L\) is said to be \emph{\(\otimes\)-convex} if
\begin{equation*}
A(y)\ge A(x)\otimes A(z)
\end{equation*}
for all \(x,z\in X\) and every \(y\in[x,z]_{\Gamma}\).
\end{definition}

\begin{remark}
When \(L=[0,1]\), \(\otimes=F\), and the domain convexity is induced by a linear order, Definition~\ref{defLconvex} recovers the condition in \eqref{eq:F-convexity}. In particular, the usual min-based fuzzy convexity in \eqref{eq:min-based-fuzzy-convexity} is obtained by taking \(F=\min\). The standard segment-based formulation is also recovered when \([x,z]_{\Gamma}\) is the segment joining \(x\) and \(z\) in a real vector space.
\end{remark}

\smallskip

In a lattice-valued context, the canonical operational choice is \(\otimes=\wedge\). This gives the intrinsic notion used throughout the article. Unlike scalar or aggregation-based approaches, this definition does not reduce the codomain value to a numerical observable.

\subsubsection{Intrinsic lattice convexity}

\smallskip

\begin{definition}\label{defintrinsic}
Let \((X,\Gamma)\) be a point-convex segment-generated abstract convexity space, and let \((L,\leq,\wedge,\vee)\) be a lattice. A mapping \(A:X\to L\) is said to be \emph{intrinsically convex} if
\begin{equation*}
A(y)\ge A(x)\wedge A(z)
\end{equation*}
for all \(x,z\in X\) and every \(y\in[x,z]_{\Gamma}\).
\end{definition}

The following example shows that this definition is not restricted to linearly ordered domains.

\begin{example}\label{ex:non-chain-domain}
Let \(X=\{a,b,c,d\}\) be the vertex set of the cycle graph \(C_4\) of Figure~\ref{fig:non-chain-convexity}, with edges
\begin{equation*}
a-b,\quad b-c,\quad c-d,\quad d-a.
\end{equation*}
We consider \(X\) endowed with the geodesic convexity induced by the graph \citep{FarberJamison1987}. Thus, for \(x,z\in X\), the segment \([x,z]_{\Gamma}\) is the set of all vertices lying on shortest paths between \(x\) and \(z\). In particular,
\begin{equation*}
[a,b]_{\Gamma}=\{a,b\},\quad [b,c]_{\Gamma}=\{b,c\},\quad
[c,d]_{\Gamma}=\{c,d\},\quad [d,a]_{\Gamma}=\{d,a\},
\end{equation*}
whereas
\begin{equation*}
[a,c]_{\Gamma}=X,\qquad [b,d]_{\Gamma}=X.
\end{equation*}
Hence \((X,\Gamma)\) is a point-convex segment-generated abstract convexity space, but it is not induced by a linear order.

Let \(L=[0,1]\) with its usual lattice structure, and let \(A:X\to[0,1]\) be a fuzzy set given by
\begin{equation*}
A(a)=0.2,\qquad A(b)=0.2,\qquad A(c)=0.8,\qquad A(d)=0.6.
\end{equation*}
Then \(A\) is intrinsically convex. Indeed, for adjacent vertices the segment contains only the two endpoints, so the condition is immediate. For the opposite pair \(a,c\), one has \(\min\{A(a),A(c)\}=0.2\), and all vertices have membership degree at least \(0.2\). The same argument applies to the opposite pair \(b,d\), since \(\min\{A(b),A(d)\}=0.2\). Therefore,
\begin{equation*}
A(y)\geq \min\{A(x),A(z)\}
\end{equation*}
for all \(x,z\in X\) and every \(y\in[x,z]_{\Gamma}\).

\begin{figure}[htbp]
\centering
\begin{tikzpicture}[
    scale=1.05,
    every node/.style={font=\small},
    vertex/.style={circle,draw,thick,minimum size=10mm,inner sep=0pt,fill=white},
    value/.style={font=\footnotesize},
    edge/.style={thick},
    diag/.style={dashed,thick}
]

\node[vertex] (a) at (0,0) {$a$};
\node[vertex] (b) at (3,0) {$b$};
\node[vertex] (c) at (3,3) {$c$};
\node[vertex] (d) at (0,3) {$d$};

\draw[edge] (a)--(b)--(c)--(d)--(a);

\draw[diag] (a)--(c);
\draw[diag] (b)--(d);

\node[value,below=4pt] at (a.south) {$A(a)=0.2$};
\node[value,below=4pt] at (b.south) {$A(b)=0.2$};
\node[value,above=4pt] at (c.north) {$A(c)=0.8$};
\node[value,above=4pt] at (d.north) {$A(d)=0.6$};

\node[align=center,font=\footnotesize] at (4.5,1.65)
{\([a,c]_{\Gamma}=X\)\\ \([b,d]_{\Gamma}=X\)};

\end{tikzpicture}
\caption{A convex fuzzy set on a non-linearly ordered domain.}
\label{fig:non-chain-convexity}
\end{figure}
\end{example}

We next record the cut characterization of intrinsic convexity. In the Euclidean setting, \citet{Maruyama2009} showed that meet-based \(L\)-fuzzy convexity can be characterized through the convexity of all lattice-valued level cuts. Related cut-based analyses for lattice- and poset-valued convex mappings were later developed by \citet{JanisSeseljaTepavcevic2017}. The result below provides the corresponding characterization in our framework of point-convex segment-generated abstract convexity spaces.

\begin{definition}\label{defprincut}
Let \((X,\Gamma)\) be a point-convex segment-generated abstract convexity space, let \((L,\leq)\) be a partially ordered set, let \(A:X\to L\), and let \(a\in L\). The \emph{principal cut} of \(A\) at level \(a\) is
\begin{equation*}
[A]_a:=\{x\in X:\ a\le A(x)\}.
\end{equation*}
\end{definition}

\begin{theorem}\label{teo_princ_cuts}
Let \((X,\Gamma)\) be a point-convex segment-generated abstract convexity space, let \((L,\leq,\wedge,\vee)\) be a lattice, and let \(A:X\to L\). Then the following statements are equivalent:
\begin{enumerate}
    \item \(A\) is intrinsically convex;
    \item for every \(a\in L\), the principal cut \([A]_a\) is convex, that is, \([A]_a\in\Gamma\).
\end{enumerate}
\end{theorem}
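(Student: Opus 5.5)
The plan is to prove both implications directly from Definition~\ref{defintrinsic}, using the segment-generated hypothesis to reduce membership in \(\Gamma\) to closure under two-point segments. The only lattice fact needed is that \(a\le b\) and \(a\le c\) imply \(a\le b\wedge c\).

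\emph{(1)\(\Rightarrow\)(2).} Fix \(a\in L\). Since \((X,\Gamma)\) is segment-generated, \([A]_a\in\Gamma\) holds as soon as \(x,z\in[A]_a\) implies \([x,z]_\Gamma\subseteq[A]_a\). So take \(x,z\in[A]_a\) and \(y\in[x,z]_\Gamma\). Then \(a\le A(x)\) and \(a\le A(z)\), hence \(a\le A(x)\wedge A(z)\). Intrinsic convexity gives \(A(x)\wedge A(z)\le A(y)\), so \(a\le A(y)\) by transitivity, and \(y\in[A]_a\). The empty cut needs no special treatment, since the two-point condition holds vacuously and \(\varnothing\in\Gamma\) in any case.

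\emph{(2)\(\Rightarrow\)(1).} Fix \(x,z\in X\) and \(y\in[x,z]_\Gamma\). Choose the level \(a:=A(x)\wedge A(z)\), which exists because \(L\) is a lattice. Then \(a\le A(x)\) and \(a\le A(z)\), so \(x,z\in[A]_a\). By hypothesis \([A]_a\in\Gamma\), and \(\Gamma\) is a convexity containing \(x\) and \(z\), so \([x,z]_\Gamma=\operatorname{co}_\Gamma(\{x,z\})\subseteq[A]_a\). Here we only use that the hull is the intersection of all convex supersets. Hence \(y\in[A]_a\), that is, \(A(y)\ge A(x)\wedge A(z)\).

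There is no substantial obstacle. The one point to handle carefully is the use of segment-generation in the first implication: checking the two-point condition for \([A]_a\) yields \([A]_a\in\Gamma\) only because \(\Gamma\) is assumed segment-generated, as noted in Remark~\ref{rem:why-segment-generated}. The second implication holds in any abstract convexity space. Point-convexity is not needed; the case \(x=z\) is covered automatically.
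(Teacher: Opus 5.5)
Your proof is correct and follows the paper's argument essentially step for step: segment-generation turns the two-point closure of $[A]_a$ into $[A]_a\in\Gamma$ for (1)$\Rightarrow$(2), and the level $a:=A(x)\wedge A(z)$ gives (2)$\Rightarrow$(1). Your side remarks are also accurate: the reverse implication uses only that $[x,z]_\Gamma$ is the intersection of all convex supersets of $\{x,z\}$, and point-convexity is not needed.
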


\begin{proof}
Assume first that \(A\) is intrinsically convex. Let \(a\in L\), and let \(x,z\in[A]_a\). Then \(a\le A(x)\) and \(a\le A(z)\), hence \(a\le A(x)\wedge A(z)\). If \(y\in[x,z]_{\Gamma}\), intrinsic convexity gives \(A(y)\ge A(x)\wedge A(z)\ge a\), so \(y\in[A]_a\). Thus \([x,z]_{\Gamma}\subseteq[A]_a\). Since this holds for all \(x,z\in[A]_a\), the segment-generated property implies that \([A]_a\) is convex.

Conversely, assume that every principal cut is convex. Let \(x,z\in X\) and let \(y\in[x,z]_{\Gamma}\). Set \(a:=A(x)\wedge A(z)\). Then \(a\le A(x)\) and \(a\le A(z)\), so \(x,z\in[A]_a\). Since \([A]_a\) is convex, we have \([x,z]_{\Gamma}\subseteq[A]_a\), and therefore \(y\in[A]_a\). Hence \(A(y)\ge a=A(x)\wedge A(z)\), and \(A\) is intrinsically convex.
\end{proof}


\begin{remark}\label{rem:intrinsic-induced-L-convex}
Theorem~\ref{teo_princ_cuts} shows that intrinsic convexity admits a complete cut-based description. Thus, a mapping \(A:X\to L\) is intrinsically convex if and only if all its principal cuts belong to \(\Gamma\). This connects our notion with the cut-generated viewpoint developed by \citet[Proposition~5.2]{PangShi2017}, where families of \(L\)-valued mappings are selected through the convexity of their level cuts and studied as \(L\)-convex structures. Our focus, however, is different. Rather than constructing an \(L\)-convex structure on \(L^X\), we fix a point-convex segment-generated abstract convexity \(\Gamma\) on the domain and use principal cuts to characterize intrinsic convexity for individual mappings \(A:X\to L\).
\end{remark}


\begin{example}\label{ex1}
Let \(X=\{x_1<x_2<x_3<x_4<x_5\}\), endowed with its usual order convexity, that is,
\begin{equation*}
[x_i,x_j]_{\Gamma}:=\{x_k:\min\{i,j\}\le k\le \max\{i,j\}\}.
\end{equation*}
Let \(L=I([0,1])\) be endowed with the usual interval order
\begin{equation*}
[a,b]\le_I[c,d]\iff a\le c\ \text{and}\ b\le d,
\end{equation*}
and define an IVFS \(A:X\to I([0,1])\) by
\begin{equation*}
A(x_1)=[0.10,0.30],\quad
A(x_2)=[0.35,0.55],\quad
A(x_3)=[0.70,0.90],
\end{equation*}
\begin{equation*}
A(x_4)=[0.50,0.75],\quad
A(x_5)=[0.20,0.45].
\end{equation*}
Then \(A\) is intrinsically convex. Indeed, its lower and upper endpoint profiles are both convex in the usual min-based sense. Equivalently, all its principal cuts are convex subsets of \(X\). For instance,
\begin{equation*}
[A]_{[0.35,0.55]}=\{x_2,x_3,x_4\},
\qquad
[A]_{[0.65,0.85]}=\{x_3\}.
\end{equation*}
Hence, by Theorem~\ref{teo_princ_cuts}, \(A\) is intrinsically convex. Figure~\ref{fig:level-cuts-example} illustrates this example.

\begin{figure}[htbp]
\centering
\scriptsize
\begin{tikzpicture}[x=1.1cm,y=4.8cm,>=Latex,font=\scriptsize]

\draw[->] (-0.3,0) -- (5.3,0) node[right] {$X$};
\draw (0,-0.03) -- (0,1);

\foreach \y in {0,0.10,0.20,0.30,0.35,0.45,0.50,0.55,0.65,0.70,0.75,0.85,0.90,1}{
  \draw (-0.04,\y)--(0.04,\y);
}
\node[left=4pt] at (0,0) {$0$};
\node[left=4pt] at (0,0.35) {$0.35$};
\node[left=4pt] at (0,0.55) {$0.55$};
\node[left=4pt] at (0,0.65) {$0.65$};
\node[left=4pt] at (0,0.85) {$0.85$};
\node[left=4pt] at (0,1) {$1$};

\foreach \x/\lab in {0.6/x_1,1.6/x_2,2.6/x_3,3.6/x_4,4.6/x_5}{
  \draw (\x,-0.012)--(\x,0.012);
  \node[below=6pt] at (\x,0) {$\lab$};
}

\draw[dashed] (0.05,0.35)--(5.0,0.35);
\draw[dashed] (0.05,0.55)--(5.0,0.55);
\draw[densely dashed] (0.05,0.65)--(5.0,0.65);
\draw[densely dashed] (0.05,0.85)--(5.0,0.85);

\foreach \x/\a/\b in {
  0.6/0.10/0.30,
  1.6/0.35/0.55,
  2.6/0.70/0.90,
  3.6/0.50/0.75,
  4.6/0.20/0.45}{
  \draw[line width=2pt] (\x,\a)--(\x,\b);
  \fill (\x,\a) circle (1.4pt);
  \fill (\x,\b) circle (1.4pt);
}

\end{tikzpicture}
\caption{Convexity of an interval-valued fuzzy set.}
\label{fig:level-cuts-example}
\end{figure}
\end{example}

\begin{remark}\label{rem:closure-intrinsic}
The cut characterization also gives the standard closure behaviour of intrinsic convexity under infima. If \(L\) is complete, intrinsically convex \(L\)-fuzzy sets are closed under arbitrary pointwise infima, since principal cuts of an infimum are intersections of principal cuts and \(\Gamma\) is closed under arbitrary intersections.

A corresponding closure property for suprema of chains requires an additional compatibility condition on the lattice. Assume that, for every pair of chains \((a_i)_{i\in I}\) and \((b_i)_{i\in I}\) in \(L\),
\begin{equation*}
\bigvee_{i\in I}(a_i\wedge b_i)=
\left(\bigvee_{i\in I}a_i\right)\wedge
\left(\bigvee_{i\in I}b_i\right).
\end{equation*}
Then intrinsically convex \(L\)-fuzzy sets are closed under pointwise suprema of chains. Indeed, let \((A_i)_{i\in I}\) be a chain of intrinsically convex mappings and define \(A:=\bigvee_{i\in I}A_i\) pointwise. For \(x,z\in X\) and \(y\in[x,z]_{\Gamma}\), one has
\begin{equation*}
A(y)
=
\bigvee_{i\in I}A_i(y)
\geq
\bigvee_{i\in I}\bigl(A_i(x)\wedge A_i(z)\bigr)
=
A(x)\wedge A(z).
\end{equation*}
Hence \(A\) is intrinsically convex.
\end{remark}

\subsection{Relational perspective: preorder-based convexity}

\smallskip

The second perspective on convexity is relational. This viewpoint uses a comparison relation on the codomain, instead of a binary operator. As we show, this is the natural setting for order-based and scalar approaches to convexity. 

\begin{definition}\label{def_order_convex}
Let \((X,\Gamma)\) be a point-convex segment-generated abstract convexity space, let \((L,\preccurlyeq)\) be a preordered set, and let \(A:X\to L\). We say that \(A\) is \emph{\(\preccurlyeq\)-convex} if, for every up-set \(U\subseteq L\), the inverse image \(A^{-1}(U)\) is convex, that is, \(A^{-1}(U)\in\Gamma\).
\end{definition}


This definition provides an order-theoretic analogue of the classical cut-based interpretation of fuzzy convexity in \eqref{eq:alpha-cuts-classical} and of the lattice-valued level-cut viewpoint in \eqref{eq:maruyama-l-fuzzy-convexity}. Moreover, when the domain convexity is induced by a linear order, it recovers the order-based condition in \eqref{eq:order-based-convexity-literature}. The following elementary characterization gives its local form.

\begin{theorem}\label{teo_orden_local}
Let \((X,\Gamma)\) be a point-convex segment-generated abstract convexity space, let \((L,\preccurlyeq)\) be a preordered set, and let \(A:X\to L\). Then the following statements are equivalent:
\begin{enumerate}
    \item \(A\) is \(\preccurlyeq\)-convex;
    \item for every \(x,z\in X\) and every \(y\in[x,z]_{\Gamma}\), one has
    \[
    A(x)\preccurlyeq A(y)
    \quad\text{or}\quad
    A(z)\preccurlyeq A(y).
    \]
\end{enumerate}
\end{theorem}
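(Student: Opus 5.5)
We prove the two implications separately, following the pattern of Theorem~\ref{teo_princ_cuts}. The direction (1)$\Rightarrow$(2) rests on a single well-chosen up-set, and the direction (2)$\Rightarrow$(1) uses the segment-generated property of $\Gamma$.

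For (1)$\Rightarrow$(2), fix $x,z\in X$ and $y\in[x,z]_{\Gamma}$, and argue by contradiction. Suppose that $A(x)\not\preccurlyeq A(y)$ and $A(z)\not\preccurlyeq A(y)$. Consider the set
\[
U:=\{b\in L:\ b\not\preccurlyeq A(y)\}.
\]
This set is an up-set. Indeed, if $b\in U$ and $b\preccurlyeq c$, then $c\preccurlyeq A(y)$ would give $b\preccurlyeq A(y)$ by transitivity, which is impossible, so $c\in U$. By hypothesis, $x,z\in A^{-1}(U)$. By (1), the set $A^{-1}(U)$ belongs to $\Gamma$, so it contains $\operatorname{co}_{\Gamma}(\{x,z\})=[x,z]_{\Gamma}\ni y$. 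Hence $A(y)\not\preccurlyeq A(y)$, which contradicts reflexivity.

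An alternative for this direction is to use the principal up-sets ${\uparrow}A(x)\cup{\uparrow}A(z)$, which is itself an up-set. It contains $A(x)$ and $A(z)$ by reflexivity, so convexity of its preimage gives $A(y)\in{\uparrow}A(x)\cup{\uparrow}A(z)$ directly. This route is even cleaner, and we would present it.

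For (2)$\Rightarrow$(1), let $U$ be any up-set and set $C:=A^{-1}(U)$. By the segment-generated assumption, it suffices to show that $x,z\in C$ implies $[x,z]_{\Gamma}\subseteq C$. Take $y\in[x,z]_{\Gamma}$. By (2), we have $A(x)\preccurlyeq A(y)$ or $A(z)\preccurlyeq A(y)$. In either case $A(y)\in U$, because $U$ is an up-set containing both $A(x)$ and $A(z)$. Hence $y\in C$, and $C\in\Gamma$.

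We expect no real obstacle. The only subtle points are, first, remembering that up-sets are defined here for a preorder (only transitivity and reflexivity are needed), and second, invoking the segment-generated property (cf.\ Remark~\ref{rem:why-segment-generated}) so that closure under segments yields membership in $\Gamma$.
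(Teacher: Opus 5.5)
Your proposal is correct, and the route you say you would present is essentially the paper's proof: the up-set ${\uparrow}A(x)\cup{\uparrow}A(z)$ for (1)$\Rightarrow$(2), and the segment-generated property for (2)$\Rightarrow$(1). Your contradiction variant using $\{b\in L: b\not\preccurlyeq A(y)\}$ is also valid, since that set is an up-set by transitivity and reflexivity gives the contradiction.
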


\begin{proof}
Assume first that \(A\) is \(\preccurlyeq\)-convex. Fix \(x,z\in X\) and \(y\in[x,z]_{\Gamma}\), and define
\[
U:=\uparrow A(x)\cup \uparrow A(z),
\qquad
\uparrow a:=\{b\in L:\ a\preccurlyeq b\}.
\]
Then \(U\) is an up-set, and \(x,z\in A^{-1}(U)\). Since \(A^{-1}(U)\) is convex, we get \([x,z]_{\Gamma}\subseteq A^{-1}(U)\). Hence \(y\in A^{-1}(U)\), and therefore \(A(y)\in \uparrow A(x)\cup\uparrow A(z)\). Thus \(A(x)\preccurlyeq A(y)\) or \(A(z)\preccurlyeq A(y)\).

Conversely, assume (2), and let \(U\subseteq L\) be an up-set. Take \(x,z\in A^{-1}(U)\), and let \(y\in[x,z]_{\Gamma}\). Since \(A(x),A(z)\in U\), condition (2) gives either \(A(x)\preccurlyeq A(y)\) or \(A(z)\preccurlyeq A(y)\). As \(U\) is an up-set, in both cases \(A(y)\in U\). Therefore \([x,z]_{\Gamma}\subseteq A^{-1}(U)\). Since this holds for all \(x,z\in A^{-1}(U)\), the segment-generated property implies that \(A^{-1}(U)\) is convex.
\end{proof}

If \((L,\preccurlyeq)\) is totally preordered, then \(\preccurlyeq\)-convexity can be written as
\[
\min\nolimits_{\preccurlyeq}\{A(x),A(z)\}\preccurlyeq A(y),
\qquad y\in[x,z]_{\Gamma},
\]
where \(\min_{\preccurlyeq}\) denotes a minimal element, with respect to \(\preccurlyeq\), among \(A(x)\) and \(A(z)\).

\begin{example}\label{ex2}
Let \(X=\{x_1<x_2<x_3\}\), endowed with its usual order convexity, and let \(L=\{0,p,q,1\}\) be the four-element diamond lattice, where \(0\preccurlyeq p\preccurlyeq 1\), \(0\preccurlyeq q\preccurlyeq 1\), and \(p\) and \(q\) are incomparable \citep{Gratzer2011}. Define an L-fuzzy set \(A:X\to L\) by
\[
A(x_1)=p,\qquad A(x_2)=1,\qquad A(x_3)=q.
\]
Then \(A\) is \(\preccurlyeq\)-convex. Indeed, for the only nontrivial segment, \(x_2\in[x_1,x_3]_{\Gamma}\), one has
\[
A(x_1)=p\preccurlyeq 1=A(x_2),
\qquad
A(x_3)=q\preccurlyeq 1=A(x_2).
\]
Thus the local condition in Theorem~\ref{teo_orden_local} is satisfied.

Equivalently, one may check the definition directly. The up-sets of \(L\) have inverse images among \(\varnothing\), \(\{x_2\}\), \(\{x_1,x_2\}\), \(\{x_2,x_3\}\), and \(X\), all of which are convex subsets of \(X\). Figure~\ref{fig:upset-preimage-example} illustrates this situation.

\begin{figure}[htbp]
\centering
\begin{tikzpicture}[
    >=Latex,
    font=\small,
    every node/.style={align=center},
    point/.style={circle,fill=black,inner sep=2.2pt},
    lnode/.style={circle,draw,minimum size=8.5mm,inner sep=0pt,fill=white},
    upsetnode/.style={circle,draw,fill=gray!15,minimum size=8.5mm,inner sep=0pt}
]

\node[point,label=below:$x_1$] (x1) at (0,0) {};
\node[point,label=below:$x_2$] (x2) at (1.7,0) {};
\node[point,label=below:$x_3$] (x3) at (3.4,0) {};

\draw[thick] (x1)--(x2)--(x3);
\node[below=13pt] at (1.7,0) {\emph{domain} \(X\)};

\node[lnode]     (zero) at (7.7,0) {$0$};
\node[upsetnode] (p)    at (6.8,1.35) {$p$};
\node[lnode]     (q)    at (8.6,1.35) {$q$};
\node[upsetnode] (one)  at (7.7,2.75) {$1$};

\draw[thick] (zero)--(p)--(one);
\draw[thick] (zero)--(q)--(one);

\node[below=13pt] at (7.7,0) {\emph{codomain} \(L\)};

\node[right=16pt] at (8.55,2.2) {$\uparrow p=\{p,1\}$};

\draw[dashed,->,shorten >=4pt,shorten <=3pt] (x1) to[bend left=12] (p.west);
\draw[dashed,->,shorten >=4pt,shorten <=3pt] (x2) to[bend left=18] (one.west);
\draw[dashed,->,shorten >=4pt,shorten <=3pt] (x3) to[bend left=8]  (q.west);

\node[above=1.35cm] at (1.45,0) {$A^{-1}(\uparrow p)=\{x_1,x_2\}$};

\end{tikzpicture}
\caption{Preimage of an up-set under an \(L\)-fuzzy set.}
\label{fig:upset-preimage-example}
\end{figure}
\end{example}

\smallskip

\smallskip

A particular case is obtained by taking the preorder to be the lattice order.

\begin{definition}\label{def_lattice_order_convex}
Let \((L,\le,\wedge,\vee)\) be a lattice. When the preorder \(\preccurlyeq\) in Definition~\ref{def_order_convex} is the lattice order \(\le\), we call the resulting notion \emph{lattice-order convexity}.
\end{definition}

A different specialization of Definition~\ref{def_order_convex} arises when the comparison among codomain values is induced not by the lattice order itself, but by a scalar map into \([0,1]\).


\subsubsection{Observable convexity}

\smallskip

In the following paragraphs, we turn to the scalar case. Several notions of fuzzy convexity rely on numerical summaries of the codomain values. In the present framework, we regard all these maps as general observables. We adopt this term to make a clear distinction from particular constructions such as score or aggregation functions. Indeed, direct inspection shows, for instance, that every score function is an observable, whereas an observable need not satisfy the specific axioms usually required of scores \citep{Alcantud}.

\begin{definition}\label{def_observable_convex}
Let \((X,\Gamma)\) be a point-convex segment-generated abstract convexity space, let \(L\) be a nonempty set, let \(\varphi:L\to[0,1]\) be a map, and let \(A:X\to L\). For \(\alpha\in[0,1]\), define
\[
[A]^\varphi_\alpha:=\{x\in X:\ \varphi(A(x))\ge \alpha\}.
\]
We say that \(A\) is \emph{\(\varphi\)-convex} if \([A]^\varphi_\alpha\) is convex for every \(\alpha\in[0,1]\). The map \(\varphi\) will be called an \emph{observable}.
\end{definition}

The following characterization is the scalar analogue of Theorem~\ref{teo_orden_local}.

\begin{theorem}\label{teo_obs_local}
Let \((X,\Gamma)\) be a point-convex segment-generated abstract convexity space, let \(L\) be a nonempty set, let \(\varphi:L\to[0,1]\), and let \(A:X\to L\). Then the following statements are equivalent:
\begin{enumerate}
    \item \(A\) is \(\varphi\)-convex;
    \item for every \(x,z\in X\) and every \(y\in[x,z]_{\Gamma}\), one has
    \[
    \varphi(A(y))\ge \min\{\varphi(A(x)),\varphi(A(z))\}.
    \]
\end{enumerate}
\end{theorem}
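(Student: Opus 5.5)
The argument will mirror the proofs of Theorem~\ref{teo_princ_cuts} and Theorem~\ref{teo_orden_local}. Each direction is a direct translation between the level sets \([A]^\varphi_\alpha\) and the local inequality, using only the segment-generated property of \((X,\Gamma)\). No structure on \(L\) is needed beyond the values of \(\varphi\) in the chain \([0,1]\).

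For (1)\(\Rightarrow\)(2), fix \(x,z\in X\) and \(y\in[x,z]_{\Gamma}\). Choose the level
\[
\alpha:=\min\{\varphi(A(x)),\varphi(A(z))\}\in[0,1].
\]
Then \(\varphi(A(x))\ge\alpha\) and \(\varphi(A(z))\ge\alpha\), so \(x,z\in[A]^\varphi_\alpha\). By hypothesis this cut is convex. The segment-generated property (convex sets contain the segments between their points) then gives \([x,z]_{\Gamma}\subseteq[A]^\varphi_\alpha\). Hence \(y\in[A]^\varphi_\alpha\), that is, \(\varphi(A(y))\ge\alpha\), which is exactly the inequality in (2).

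For (2)\(\Rightarrow\)(1), fix \(\alpha\in[0,1]\) and take \(x,z\in[A]^\varphi_\alpha\). Then \(\min\{\varphi(A(x)),\varphi(A(z))\}\ge\alpha\), since \([0,1]\) is totally ordered. For any \(y\in[x,z]_{\Gamma}\), condition (2) yields \(\varphi(A(y))\ge\alpha\), so \([x,z]_{\Gamma}\subseteq[A]^\varphi_\alpha\). Because \(\Gamma\) is segment-generated, closure under two-point segments is equivalent to membership in \(\Gamma\), so \([A]^\varphi_\alpha\in\Gamma\). This holds for every \(\alpha\), so \(A\) is \(\varphi\)-convex. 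The empty cut needs no separate treatment, since it satisfies the segment condition vacuously and \(\varnothing\in\Gamma\).

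There is no real obstacle. The only point needing care is the one flagged in Remark~\ref{rem:why-segment-generated}: the segment-generated assumption is what turns the local condition back into genuine convexity of the cuts. Alternatively, one could derive the theorem from Theorem~\ref{teo_orden_local}. The idea is to pull back the total order of \([0,1]\) to the preorder \(a\preccurlyeq b\iff\varphi(a)\le\varphi(b)\) on \(L\) and check that its up-sets have preimages of the form \([A]^\varphi_\alpha\) or their strict analogues. The direct argument above avoids the bookkeeping of strict versus non-strict levels, so I would present that one.
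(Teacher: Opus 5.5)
Your proposal is correct and follows essentially the same argument as the paper. In (1)\(\Rightarrow\)(2) you take the level \(\alpha=\min\{\varphi(A(x)),\varphi(A(z))\}\) and use convexity of that cut to place \([x,z]_{\Gamma}\) inside it; in (2)\(\Rightarrow\)(1) you show that every cut is closed under segments and invoke the segment-generated property, exactly as the paper does (the paper then derives the equivalence with \(\preccurlyeq_{\varphi}\)-convexity from this result, the reverse of your suggested alternative).
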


\begin{proof}
Assume first that \(A\) is \(\varphi\)-convex. Let \(x,z\in X\), let \(y\in[x,z]_{\Gamma}\), and define
\[
\alpha:=\min\{\varphi(A(x)),\varphi(A(z))\}.
\]
Then \(x,z\in[A]^\varphi_\alpha\). Since this cut is convex, we obtain \([x,z]_{\Gamma}\subseteq[A]^\varphi_\alpha\), and therefore \(y\in[A]^\varphi_\alpha\). Hence
\[
\varphi(A(y))\ge \alpha=\min\{\varphi(A(x)),\varphi(A(z))\}.
\]

Conversely, assume that the second statement holds, and let \(\alpha\in[0,1]\). Take \(x,z\in[A]^\varphi_\alpha\), and let \(y\in[x,z]_{\Gamma}\). Then \(\varphi(A(x))\ge\alpha\) and \(\varphi(A(z))\ge\alpha\). By the second statement,
\[
\varphi(A(y))\ge \min\{\varphi(A(x)),\varphi(A(z))\}\ge\alpha,
\]
so \(y\in[A]^\varphi_\alpha\). Therefore \([x,z]_{\Gamma}\subseteq[A]^\varphi_\alpha\). Since this holds for all \(x,z\in[A]^\varphi_\alpha\), the segment-generated property implies that \([A]^\varphi_\alpha\) is convex.
\end{proof}

Observable convexity is a particular case of preorder-based convexity. Indeed, every observable \(\varphi:L\to[0,1]\) induces a total preorder on \(L\), defined by
\[
a\preccurlyeq_{\varphi} b \Longleftrightarrow \varphi(a)\le \varphi(b).
\]
Since \([0,1]\) is totally ordered, \(\preccurlyeq_{\varphi}\) is always a total preorder.

\begin{theorem}\label{teo_observable_induced_order}
Let \((X,\Gamma)\) be a point-convex segment-generated abstract convexity space, let \(L\) be a nonempty set, let \(\varphi:L\to[0,1]\), and let \(A:X\to L\). Then the following statements are equivalent:
\begin{enumerate}
    \item \(A\) is \(\varphi\)-convex;
    \item \(A\) is \(\preccurlyeq_{\varphi}\)-convex.
\end{enumerate}
\end{theorem}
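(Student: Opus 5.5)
The plan is to reduce both conditions to their local forms and observe that these coincide. By Theorem~\ref{teo_obs_local}, \(A\) is \(\varphi\)-convex if and only if
\[
\varphi(A(y))\ge \min\{\varphi(A(x)),\varphi(A(z))\}
\]
for all \(x,z\in X\) and every \(y\in[x,z]_{\Gamma}\). By Theorem~\ref{teo_orden_local}, applied to the preordered set \((L,\preccurlyeq_{\varphi})\), \(A\) is \(\preccurlyeq_{\varphi}\)-convex if and only if, for the same triples, \(A(x)\preccurlyeq_{\varphi}A(y)\) or \(A(z)\preccurlyeq_{\varphi}A(y)\). Unfolding the definition of \(\preccurlyeq_{\varphi}\), the latter reads \(\varphi(A(x))\le\varphi(A(y))\) or \(\varphi(A(z))\le\varphi(A(y))\).

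It then remains to check the elementary fact that, for real numbers \(s,t,u\in[0,1]\), we have \(u\ge\min\{s,t\}\) if and only if \(u\ge s\) or \(u\ge t\). This holds because \([0,1]\) is totally ordered, so \(\min\{s,t\}\) equals one of \(s\) or \(t\). Applying it with \(s=\varphi(A(x))\), \(t=\varphi(A(z))\), \(u=\varphi(A(y))\) identifies the two local conditions pointwise in \((x,y,z)\), and the equivalence follows.

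As a cross-check, I could also argue at the level of cuts. Each \([A]^\varphi_\alpha\) is the preimage of the up-set \(\{a:\varphi(a)\ge\alpha\}\) of \((L,\preccurlyeq_{\varphi})\), which gives (2)\(\Rightarrow\)(1) directly from Definition~\ref{def_order_convex}. The converse is more awkward by this route, since an arbitrary up-set of a total preorder may be of the form \(\{a:\varphi(a)>\alpha\}\) rather than a closed cut. That direction would need writing such an up-set as a nested union of closed cuts and invoking closure of \(\Gamma\) under nested unions. Routing through the local characterizations avoids this subtlety, so that is the path I will take. There is no genuine obstacle; the only care needed is to invoke Theorem~\ref{teo_orden_local} for the preorder \(\preccurlyeq_{\varphi}\), which requires only a preorder and not antisymmetry.
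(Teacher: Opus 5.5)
Your proposal is correct and matches the paper's proof: it reduces both notions to their local forms via Theorems~\ref{teo_obs_local} and~\ref{teo_orden_local} and identifies the two local conditions using the totality of \([0,1]\). Your cut-level cross-check is also sound, since strict up-sets are nested unions of closed cuts, but the main argument does not need it.
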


\begin{proof}
By Theorem~\ref{teo_obs_local}, \(A\) is \(\varphi\)-convex if and only if, for every \(x,z\in X\) and every \(y\in[x,z]_{\Gamma}\),
\[
\varphi(A(y))\ge \min\{\varphi(A(x)),\varphi(A(z))\}.
\]
Since \([0,1]\) is totally ordered, this is equivalent to
\[
\varphi(A(x))\le \varphi(A(y))
\quad\text{or}\quad
\varphi(A(z))\le \varphi(A(y)).
\]
By the definition of \(\preccurlyeq_{\varphi}\), this means
\[
A(x)\preccurlyeq_{\varphi}A(y)
\quad\text{or}\quad
A(z)\preccurlyeq_{\varphi}A(y).
\]
By Theorem~\ref{teo_orden_local}, this is equivalent to \(A\) being \(\preccurlyeq_{\varphi}\)-convex.
\end{proof}

\smallskip

\begin{remark}
The aggregation-based convexity condition recalled in \eqref{eq:aggregation-based-thfs-convexity} is recovered from Definition~\ref{def_observable_convex} by taking \(L\) as the family of typical hesitant fuzzy values and choosing the aggregation function as the observable \(\varphi\), in the case where the domain convexity is induced by a linear order.
\end{remark}

\smallskip

The next example visualizes why observable convexity should not be confused with intrinsic lattice convexity.

\begin{example}\label{ex3}
Let \(L=\{0<u<1\}\) be the three-element chain, and define an observable \(\varphi:L\to[0,1]\) by
\[
\varphi(0)=0,\qquad \varphi(u)=1,\qquad \varphi(1)=1.
\]
Let \(X=\{x_1<x_2<x_3\}\), endowed with its usual order convexity, and define \(A:X\to L\) by
\[
A(x_1)=1,\qquad A(x_2)=u,\qquad A(x_3)=1.
\]
Then \(A\) is \(\varphi\)-convex, since
\[
\varphi(A(x_1))=\varphi(A(x_2))=\varphi(A(x_3))=1.
\]
However, \(A\) is not intrinsically convex, because \(x_2\in[x_1,x_3]_{\Gamma}\),
\[
A(x_1)\wedge A(x_3)=1\wedge 1=1,
\]
whereas \(A(x_2)=u<1\). Thus, the observable hides the loss of lattice information at the intermediate point. Figure~\ref{fig:observable-not-intrinsic-example} visualizes this example.

\begin{figure}[htbp]
\centering
\begin{tikzpicture}[
    x=1.05cm,
    y=1.05cm,
    >=Latex,
    font=\small,
    point/.style={circle,fill=black,inner sep=2pt}
]

\node at (1.45,3.65) {\emph{lattice-valued profile}};

\draw[->] (-0.25,0) -- (3.35,0) node[right] {$X$};

\node[left=5pt] at (0,0) {$0$};
\node[left=5pt] at (0,1.4) {$u$};
\node[left=5pt] at (0,2.8) {$1$};

\foreach \x/\lab in {0.55/x_1,1.55/x_2,2.55/x_3}{
  \draw (\x,-0.05) -- (\x,0.05);
  \node[below=6pt] at (\x,0) {$\lab$};
}

\coordinate (A1) at (0.55,2.8);
\coordinate (A2) at (1.55,1.4);
\coordinate (A3) at (2.55,2.8);

\draw[thick] (A1)--(A2)--(A3);
\foreach \P in {A1,A2,A3}{
  \fill (\P) circle (2pt);
}

\begin{scope}[xshift=5.25cm]

\node at (1.45,3.65) {\emph{observable profile}};

\draw[->] (-0.25,0) -- (3.35,0) node[right] {$X$};

\node[left=5pt] at (0,0) {$0$};
\node[left=5pt] at (0,2.8) {$1$};

\foreach \x/\lab in {0.55/x_1,1.55/x_2,2.55/x_3}{
  \draw (\x,-0.05) -- (\x,0.05);
  \node[below=6pt] at (\x,0) {$\lab$};
}

\coordinate (B1) at (0.55,2.8);
\coordinate (B2) at (1.55,2.8);
\coordinate (B3) at (2.55,2.8);

\draw[thick] (B1)--(B2)--(B3);
\foreach \P in {B1,B2,B3}{
  \fill (\P) circle (2pt);
}

\end{scope}

\end{tikzpicture}
\caption{Observable convexity may hold even when intrinsic convexity fails.}
\label{fig:observable-not-intrinsic-example}
\end{figure}
\end{example}

In the next section, we study when such scalar descriptions preserve or reconstruct the intrinsic one.

\smallskip

Figure~\ref{fig_convexity1} summarizes the relationships between the two schemes introduced in this section.

\begin{figure}[htbp]
\centering
\begin{tikzpicture}[
    scale=0.72,
    transform shape,
    >=Latex,
    every node/.style={align=center},
    box/.style={
        draw,
        rounded corners,
        thick,
        minimum width=40mm,
        minimum height=10mm,
        inner sep=4pt
    },
    litbox/.style={
        draw,
        rounded corners,
        thin,
        fill=gray!8,
        minimum width=54mm,
        inner sep=3pt,
        font=\footnotesize
    },
    title/.style={
        font=\bfseries
    },
    bridge/.style={
        dashed,
        thick
    },
    group/.style={
        draw,
        rounded corners,
        dashed,
        inner sep=8pt,
        thick
    }
]

\node[title] (t1) at (0,2.7) {Operational perspective};
\node[title] (t2) at (10.7,2.7) {Relational perspective};

\node[box] (lotimes) at (0,1.0)
{\((L,\otimes)\)-convexity\\[1mm]
\(A(y)\geq A(x)\otimes A(z)\)};

\node[box] (intrinsic) at (0,-1.1)
{Intrinsic convexity\\[1mm]
\(A(y)\geq A(x)\wedge A(z)\)};

\draw[->, thick] (lotimes) -- (intrinsic)
node[midway, right=1mm] {\footnotesize \(\otimes=\wedge\)};

\node[litbox] (litleft) at (0,-3.85)
{Min-based fuzzy convexity \citep{Zadeh1965}\\[1mm]
\(F\)-convexity \citep{Diaz2017}\\[1mm]
Lattice-/poset-valued cut-based viewpoints\\
\citep{Maruyama2009,JanisSeseljaTepavcevic2017}};

\node[group, fit=(lotimes)(intrinsic)(litleft)] (internalgroup) {};

\node[box] (preorder) at (10.7,1.0)
{Preorder-based convexity\\[1mm]
\(A(x)\preccurlyeq A(y)\) or \(A(z)\preccurlyeq A(y)\)};

\node[box] (latticeorder) at (7.7,-1.1)
{Lattice-order convexity\\[1mm]
\(\preccurlyeq=\leq\)};

\node[box] (observable) at (13.7,-1.1)
{Observable convexity\\[1mm]
\(\varphi(A(y))\ge \min\{\varphi(A(x)),\varphi(A(z))\}\)};

\draw[->, thick] (preorder.south west) -- (latticeorder.north)
node[pos=0.45, above left=1mm] {\footnotesize lattice order};

\draw[->, thick] (preorder.south east) -- (observable.north)
node[pos=0.45, above right=1mm] {\footnotesize induced by\\[-1mm] \(\varphi:L\to[0,1]\)};

\node[litbox] (litright) at (10.7,-4.05)
{Score-based convexity for THFSs \citep{RashidBeg2016}\\[1mm]
Aggregation-based convexity \citep{Huidobro2021}\\[1mm]
Order-based convexity for IVFSs and THFSs\\
\citep{Huidobro2022,Huidobro2025}};

\node[group, fit=(preorder)(latticeorder)(observable)(litright)] (externalgroup) {};

\node[group, fit=(latticeorder)(observable), inner sep=6pt] (relsubcases) {};

\draw[bridge] (intrinsic.east) -- 
node[midway, above=1.2mm, font=\footnotesize]
{}
(relsubcases.west);

\end{tikzpicture}
\caption{Operational and relational schemes for convexity on \(L\)-fuzzy sets.}
\label{fig_convexity1}
\end{figure}


\section{Bridges between intrinsic and scalarized convexity}\label{section:bridges}

In the previous section we introduced two general convexity models on \(L\)-fuzzy sets. The operational perspective contains intrinsic lattice convexity as the distinguished case, while the relational perspective contains preorder-based and observable convexities. The purpose of this section is to compare these notions and to identify when a scalar or relational description is faithful to the intrinsic lattice structure.

\smallskip

In the converse statements below, we shall use the following non-degeneracy condition. We say that \((X,\Gamma)\) has a \emph{nontrivial segment} if there exist \(p,r\in X\) and \(q\in[p,r]_{\Gamma}\setminus\{p,r\}\). This assumption only excludes degenerate domains with no genuine intermediate points. This condition is used to obtain necessity in the characterization results.

\subsection{Intrinsic convexity versus observable convexity}

\smallskip

We first study when intrinsically convex mappings remain convex after applying a scalar observable. The following result shows that this is possible only when the observable transforms lattice meets into minima.

\begin{theorem}\label{teo_sharp_observable}
Let \((X,\Gamma)\) be a point-convex segment-generated abstract convexity space with a nontrivial segment, let \((L,\leq,\wedge,\vee)\) be a lattice, and let \(\varphi:L\to[0,1]\) be an observable. Then the following statements are equivalent:
\begin{enumerate}
    \item every intrinsically convex \(L\)-fuzzy set \(A:X\to L\) is \(\varphi\)-convex;
    \item \(\varphi(a\wedge b)=\min\{\varphi(a),\varphi(b)\}\) for all \(a,b\in L\).
\end{enumerate}
\end{theorem}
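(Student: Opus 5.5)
The two implications need different tools. For (2) $\Rightarrow$ (1) the local characterizations already proved suffice. For (1) $\Rightarrow$ (2) we will build, for given $a,b$, two small test mappings that use the nontrivial segment.

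\emph{(2) $\Rightarrow$ (1).} We first note that (2) forces $\varphi$ to be monotone. If $a\le b$, then $a=a\wedge b$, so $\varphi(a)=\min\{\varphi(a),\varphi(b)\}\le\varphi(b)$. Now let $A$ be intrinsically convex, and take $x,z\in X$ and $y\in[x,z]_\Gamma$. Then $A(y)\ge A(x)\wedge A(z)$. Applying monotonicity and then (2) gives
\[
\varphi(A(y))\ge\varphi(A(x)\wedge A(z))=\min\{\varphi(A(x)),\varphi(A(z))\}.
\]
By Theorem~\ref{teo_obs_local}, $A$ is $\varphi$-convex.

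\emph{(1) $\Rightarrow$ (2).} Fix $p,r\in X$ and $q\in[p,r]_\Gamma\setminus\{p,r\}$. For $a,b\in L$ we must prove two inequalities.

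\emph{Step 1: $\varphi(a\wedge b)\ge\min\{\varphi(a),\varphi(b)\}$.} Define $A(p)=a$, $A(r)=b$, and $A(w)=a\wedge b$ for every other $w\in X$, including $q$. We check intrinsic convexity by the cut criterion, Theorem~\ref{teo_princ_cuts}. For $c\in L$, the cut $[A]_c$ has one of four forms:
\begin{itemize}
\item if $c\le a\wedge b$, then $[A]_c=X$;
\item if $c\le a$ but $c\not\le b$, then $[A]_c=\{p\}$;
\item if $c\le b$ but $c\not\le a$, then $[A]_c=\{r\}$;
\item otherwise $[A]_c=\varnothing$.
\end{itemize}
All four sets are convex, using point-convexity for the singletons. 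So $A$ is intrinsically convex, and by (1) it is $\varphi$-convex. Theorem~\ref{teo_obs_local} applied to $q\in[p,r]_\Gamma$ yields the inequality.

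\emph{Step 2: $\varphi(a\wedge b)\le\min\{\varphi(a),\varphi(b)\}$.} By symmetry it suffices to show monotonicity: $c\le d$ implies $\varphi(c)\le\varphi(d)$. This is the delicate step. We would like a profile with value $c$ at an endpoint, $d$ at the middle and $c$ elsewhere, but an observable inequality only bounds the middle from below. So the middle point must carry the smaller value $c$, with $d$ at both endpoints of some segment. Such a profile, $d$ at the endpoints and $c$ inside, is generally not intrinsically convex. Instead, use the sets $[p,w]_\Gamma$. Define
\[
B(w)=d \text{ if } w\in[p,q]_\Gamma,\qquad B(w)=c \text{ otherwise}.
\]
Its cuts are $X$, $[p,q]_\Gamma$, or $\varnothing$, all convex, so $B$ is intrinsically convex.

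To extract $\varphi(c)\le\varphi(d)$ we need a segment with one endpoint valued $c$ and the other valued $d$, whose intermediate point is valued $c$. The inequality from (1) then reads $\varphi(c)\ge\min\{\varphi(c),\varphi(d)\}$, which is trivial. So the test has to run the other way: an intrinsically convex profile that is $\varphi$-convex only when $\varphi(c)\le\varphi(d)$. The natural attempt sets $A(p)=d$, $A(q)=c$, $A(r)=c$ and $c$ elsewhere. Its cuts are $X$ or $\{p\}$, so it is intrinsically convex. But $\varphi$-convexity at $q$ gives $\varphi(c)\ge\min\{\varphi(d),\varphi(c)\}$, again trivial. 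This shows that monotonicity is \emph{not} forced by (1) alone.

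The main obstacle is therefore the reverse inequality. I expect it may actually fail. Take $L$ to be a two-element chain $0<1$ with $\varphi(0)=1$ and $\varphi(1)=0$. Every function into a chain satisfies $\varphi(A(y))\ge\min$ automatically when the preorder is reversed and every profile is quasi-concave, which needs checking. My plan is first to test this counterexample carefully. If it holds, the right conclusion is only the inequality $\varphi(a\wedge b)\ge\min\{\varphi(a),\varphi(b)\}$, and one needs monotonicity of $\varphi$ as an added hypothesis, or built into the notion of observable, to obtain equality. Under that hypothesis Step 2 is immediate, since $a\wedge b\le a,b$.
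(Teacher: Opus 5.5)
There is a genuine gap. Your direction (2)$\Rightarrow$(1) and your Step~1 are correct; Step~1 uses the same profile as the paper, checked via cuts rather than directly. The gap is Step~2. You never prove that $\varphi$ is monotone, and your claim that (1) does not force monotonicity is wrong. The error is in the sentence ``an observable inequality only bounds the middle from below. So the middle point must carry the smaller value $c$''. This is inverted. The inequality bounds $\varphi$ of the middle value from below, so to obtain $\varphi(c)\le\varphi(d)$ you want $d$ in the middle.

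Take $c\le d$ and define $B(q)=d$ and $B(x)=c$ for every $x\neq q$. Its principal cuts are:
\begin{itemize}
\item $X$, for levels $e\le c$;
\item $\{q\}$, for levels $e\le d$ with $e\not\le c$;
\item $\varnothing$, otherwise.
\end{itemize}
All of these are convex, the singleton by point-convexity. Hence $B$ is intrinsically convex by Theorem~\ref{teo_princ_cuts}. By (1) and Theorem~\ref{teo_obs_local} applied to $q\in[p,r]_\Gamma$, you get $\varphi(d)=\varphi(B(q))\ge\min\{\varphi(B(p)),\varphi(B(r))\}=\varphi(c)$. This is exactly the paper's argument. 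No endpoint needs to carry $d$; you only need $p,r\neq q$. Once monotonicity is in hand, your Step~2 closes as you say, since $a\wedge b\le a$ and $a\wedge b\le b$.

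Your proposed counterexample ($L=\{0<1\}$, $\varphi(0)=1$, $\varphi(1)=0$) is refuted by the same profile. The map $A(q)=1$, $A\equiv 0$ elsewhere is intrinsically convex, but $\varphi(A(q))=0<1=\min\{\varphi(A(p)),\varphi(A(r))\}$. So (1) fails for this $\varphi$, consistently with the failure of (2), since $\varphi(0\wedge 1)=1\neq 0$. The theorem is true as stated, and no extra monotonicity hypothesis on observables is needed.
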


\begin{proof}
Assume first (2). Observe that condition (2) implies that \(\varphi\) is monotone. Indeed, if \(a\leq b\), then \(a=a\wedge b\), and hence
\begin{equation*}
\varphi(a)
=
\varphi(a\wedge b)
=
\min\{\varphi(a),\varphi(b)\},
\end{equation*}
which gives \(\varphi(a)\leq\varphi(b)\).

Let \(A:X\to L\) be intrinsically convex, let \(x,z\in X\), and let \(y\in[x,z]_{\Gamma}\). Then \(A(y)\ge A(x)\wedge A(z)\). By the monotonicity of \(\varphi\),
\begin{equation*}
\varphi(A(y))
\ge
\varphi(A(x)\wedge A(z))
=
\min\{\varphi(A(x)),\varphi(A(z))\}.
\end{equation*}
Hence \(A\) is \(\varphi\)-convex by Theorem~\ref{teo_obs_local}.

Conversely, assume (1). We first show that \(\varphi\) is monotone. Let \(a,b\in L\) with \(a\leq b\). Choose \(p,r\in X\) and \(q\in[p,r]_{\Gamma}\setminus\{p,r\}\), and define \(B:X\to L\) by
\begin{equation*}
B(q)=b,\qquad B(x)=a\quad\text{for every }x\in X\setminus\{q\}.
\end{equation*}
We claim that \(B\) is intrinsically convex. Let \(u,v\in X\) and \(w\in[u,v]_{\Gamma}\). If \(B(u)=B(v)=b\), then \(u=v=q\), and point-convexity gives \(w=q\), so the inequality is immediate. Otherwise, at least one of \(B(u)\) and \(B(v)\) is equal to \(a\), and therefore
\begin{equation*}
B(u)\wedge B(v)\leq a\leq B(w).
\end{equation*}
Hence \(B\) is intrinsically convex. By (1), \(B\) is \(\varphi\)-convex. Applying Theorem~\ref{teo_obs_local} to \(p,r\) and \(q\in[p,r]_{\Gamma}\), we obtain
\begin{equation*}
\varphi(b)
=
\varphi(B(q))
\geq
\min\{\varphi(B(p)),\varphi(B(r))\}
=
\varphi(a).
\end{equation*}
Thus \(\varphi\) is monotone.

Now fix \(a,b\in L\), set \(m:=a\wedge b\), and define \(A:X\to L\) by
\begin{equation*}
A(p)=a,\qquad A(r)=b,\qquad A(x)=m\quad\text{for every }x\in X\setminus\{p,r\}.
\end{equation*}
We claim that \(A\) is intrinsically convex. Let \(u,v\in X\) and \(w\in[u,v]_{\Gamma}\). If \(u=v=p\) or \(u=v=r\), then point-convexity gives \(w=u\), and the inequality is immediate. If \(\{u,v\}=\{p,r\}\), then \(A(u)\wedge A(v)=m\le A(w)\). In all remaining cases, at least one of \(A(u)\) and \(A(v)\) is equal to \(m\), so \(A(u)\wedge A(v)\le m\le A(w)\). Hence \(A\) is intrinsically convex.

By (1), \(A\) is \(\varphi\)-convex. Applying Theorem~\ref{teo_obs_local} to \(p,r\) and \(q\in[p,r]_{\Gamma}\), we obtain
\begin{equation*}
\varphi(a\wedge b)=\varphi(A(q))
\ge
\min\{\varphi(A(p)),\varphi(A(r))\}
=
\min\{\varphi(a),\varphi(b)\}.
\end{equation*}
On the other hand, since \(a\wedge b\le a\) and \(a\wedge b\le b\), the monotonicity of \(\varphi\) gives
\begin{equation*}
\varphi(a\wedge b)\le \varphi(a),
\qquad
\varphi(a\wedge b)\le \varphi(b).
\end{equation*}
Therefore \(\varphi(a\wedge b)\le \min\{\varphi(a),\varphi(b)\}\), and equality follows.
\end{proof}

Thus, scalarization preserves intrinsic convexity only under a meet-preservation condition. This result is particularly important because it already excludes many numerical summaries from being faithful to intrinsic convexity.

\subsubsection{Reconstruction by one observable}

\smallskip

We must note that preservation is not the same as reconstruction. We now characterize when a single observable not only preserves intrinsic convexity, but completely recovers it.

\begin{theorem}\label{teo_single_observable_exact}
Let \((X,\Gamma)\) be a point-convex segment-generated abstract convexity space with a nontrivial segment, let \((L,\leq,\wedge,\vee)\) be a lattice, and let \(\varphi:L\to[0,1]\) be an observable. Then the following statements are equivalent:
\begin{enumerate}
    \item for every \(L\)-fuzzy set \(A:X\to L\), \(A\) is intrinsically convex if and only if \(A\) is \(\varphi\)-convex;
    \item \(\varphi(a\wedge b)=\min\{\varphi(a),\varphi(b)\}\) for all \(a,b\in L\), and \(\varphi(a)\le \varphi(b)\) implies \(a\le b\) for all \(a,b\in L\), that is, \(\varphi\) reflects the order.
\end{enumerate}
\end{theorem}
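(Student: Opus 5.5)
For (2)$\Rightarrow$(1), I will use Theorem~\ref{teo_sharp_observable}. Meet-preservation already gives that every intrinsically convex $A$ is $\varphi$-convex, so only the converse is needed. Let $A$ be $\varphi$-convex, and take $x,z\in X$ and $y\in[x,z]_{\Gamma}$. Theorem~\ref{teo_obs_local} gives
\[
\varphi(A(y))\ge\min\{\varphi(A(x)),\varphi(A(z))\}=\varphi(A(x)\wedge A(z)),
\]
where the equality again uses meet-preservation. Since $\varphi$ reflects the order, this yields $A(x)\wedge A(z)\le A(y)$. Hence $A$ is intrinsically convex.

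For (1)$\Rightarrow$(2), meet-preservation follows at once. Statement (1) contains the hypothesis of Theorem~\ref{teo_sharp_observable}(1), so that theorem gives $\varphi(a\wedge b)=\min\{\varphi(a),\varphi(b)\}$, and in particular $\varphi$ is monotone. For order reflection, I will argue by contraposition. Suppose $a,b\in L$ satisfy $\varphi(a)\le\varphi(b)$ but $a\not\le b$. Fix the nontrivial segment $q\in[p,r]_{\Gamma}\setminus\{p,r\}$ and build a mapping that is $\varphi$-convex but not intrinsically convex. The natural candidate is $B(q)=b$ and $B(x)=a$ for every $x\neq q$. Then $B(p)\wedge B(r)=a\not\le b=B(q)$, so $B$ is not intrinsically convex.

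It remains to check that $B$ is $\varphi$-convex via Theorem~\ref{teo_obs_local}, so let $w\in[u,v]_{\Gamma}$. If $u=v=q$, point-convexity gives $w=q$ and the inequality is trivial. Otherwise at least one of $u,v$ has value $a$, so $\min\{\varphi(B(u)),\varphi(B(v))\}\le\varphi(a)$. Moreover $\varphi(B(w))\in\{\varphi(a),\varphi(b)\}$ is always at least $\varphi(a)$, because $\varphi(a)\le\varphi(b)$. So $B$ is $\varphi$-convex, contradicting (1).

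I expect the main subtlety to be the choice of this witness. It must be chosen so that $\varphi$-convexity reduces to the single scalar inequality $\varphi(a)\le\varphi(b)$, without needing any lattice comparison between $a$ and $b$. The one-point perturbation above achieves this, mirroring the construction in the proof of Theorem~\ref{teo_sharp_observable}. The remaining verifications are routine case checks using point-convexity.
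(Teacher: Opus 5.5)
Your proposal is correct and follows essentially the same route as the paper: (2)$\Rightarrow$(1) combines Theorem~\ref{teo_sharp_observable} with order reflection, and (1)$\Rightarrow$(2) obtains meet-preservation from Theorem~\ref{teo_sharp_observable} and order reflection from the same one-point witness ($b$ at $q$, $a$ elsewhere). The only difference is that you argue order reflection by contraposition, whereas the paper shows the witness is $\varphi$-convex, invokes (1), and reads off $a=A(p)\wedge A(r)\le A(q)=b$ directly.
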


\begin{proof}
Assume (2). By the meet-minimum identity, \(\varphi\) is monotone. If \(A\) is intrinsically convex, then \(A\) is \(\varphi\)-convex by Theorem~\ref{teo_sharp_observable}. Conversely, assume that \(A\) is \(\varphi\)-convex. Let \(x,z\in X\) and let \(y\in[x,z]_{\Gamma}\). By Theorem~\ref{teo_obs_local},
\begin{equation*}
\varphi(A(y))
\ge
\min\{\varphi(A(x)),\varphi(A(z))\}
=
\varphi(A(x)\wedge A(z)).
\end{equation*}
Since \(\varphi\) reflects the order, it follows that \(A(x)\wedge A(z)\le A(y)\). Hence \(A\) is intrinsically convex.

Assume now (1). By Theorem~\ref{teo_sharp_observable}, one has
\begin{equation*}
\varphi(a\wedge b)=\min\{\varphi(a),\varphi(b)\}
\end{equation*}
for all \(a,b\in L\). It remains to prove that \(\varphi\) reflects the order. Let \(a,b\in L\) satisfy \(\varphi(a)\le \varphi(b)\). Choose \(p,r\in X\) and \(q\in[p,r]_{\Gamma}\setminus\{p,r\}\), and define \(A:X\to L\) by
\begin{equation*}
A(q)=b,\qquad A(x)=a\quad\text{for every }x\in X\setminus\{q\}.
\end{equation*}

We claim that \(A\) is \(\varphi\)-convex. By Theorem~\ref{teo_obs_local}, it is enough to check that
\begin{equation*}
\varphi(A(y))\ge \min\{\varphi(A(x)),\varphi(A(z))\}
\end{equation*}
for all \(x,z\in X\) and \(y\in[x,z]_{\Gamma}\). If \(A(x)=A(z)=b\), then \(x=z=q\), and point-convexity gives \(y=q\), so the inequality is immediate. Otherwise, at least one of \(A(x)\) and \(A(z)\) is equal to \(a\), and therefore
\begin{equation*}
\min\{\varphi(A(x)),\varphi(A(z))\}\le \varphi(a).
\end{equation*}
Since \(A(y)\in\{a,b\}\) and \(\varphi(a)\le \varphi(b)\), one has \(\varphi(A(y))\ge \varphi(a)\). Hence \(A\) is \(\varphi\)-convex.

By (1), \(A\) is intrinsically convex. Applying intrinsic convexity to \(p,r\) and \(q\in[p,r]_{\Gamma}\), we get
\begin{equation*}
a=A(p)\wedge A(r)\le A(q)=b.
\end{equation*}
Thus \(\varphi(a)\le\varphi(b)\) implies \(a\le b\), so \(\varphi\) reflects the order.
\end{proof}

Theorem~\ref{teo_single_observable_exact} gives an obstruction to scalar reconstruction. A single observable must preserve meets as minima and must also reflect the underlying lattice order. Since \([0,1]\) is totally ordered, the existence of an order-reflecting map \(\varphi:L\to[0,1]\) forces \(L\) itself to be totally ordered. Hence this reconstruction mechanism is unavailable in non-chain lattices.

\begin{example}\label{ex4}
Let \(L=\{0,p,q,1\}\) be the four-element diamond lattice, where \(0<p<1\), \(0<q<1\), and \(p\) and \(q\) are incomparable \citep{Gratzer2011}. No map \(\varphi:L\to[0,1]\) can reflect the order of \(L\). Indeed, since \([0,1]\) is totally ordered, either \(\varphi(p)\le\varphi(q)\) or \(\varphi(q)\le\varphi(p)\). If \(\varphi\) reflected the order, this would imply either \(p\le q\) or \(q\le p\), which is impossible. Hence no single observable can reconstruct intrinsic convexity on this lattice.
\end{example}

The implication from the second statement in Theorem~\ref{teo_single_observable_exact} to the first one remains valid without assuming the existence of a nontrivial segment.

\begin{corollary}\label{corolario_single_observable}
Let \((X,\Gamma)\) be a point-convex segment-generated abstract convexity space, let \((L,\leq,\wedge,\vee)\) be a lattice, and let \(\varphi:L\to[0,1]\) be an observable. Assume that \(\varphi(a\wedge b)=\min\{\varphi(a),\varphi(b)\}\) for all \(a,b\in L\), and that \(\varphi\) reflects the order. Then, for every \(L\)-fuzzy set \(A:X\to L\), the following statements are equivalent:
\begin{enumerate}
    \item \(A\) is intrinsically convex;
    \item \(A\) is \(\varphi\)-convex.
\end{enumerate}
\end{corollary}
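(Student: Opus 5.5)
The plan is to re-run the implication from (2) to (1) in the proof of Theorem~\ref{teo_single_observable_exact}, verifying that it never uses the nontrivial segment. That hypothesis enters only in the converse direction, through the test mappings \(B\) and \(A\) built on a point \(q\in[p,r]_{\Gamma}\setminus\{p,r\}\). The forward implication works directly with arbitrary \(x,z\in X\) and \(y\in[x,z]_{\Gamma}\). So the corollary should follow by a direct argument based on the local characterizations of Theorem~\ref{teo_obs_local} and of intrinsic convexity. Neither of these requires a nontrivial segment.

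\emph{Step 1.} Extract monotonicity of \(\varphi\) from the meet--minimum identity. If \(a\le b\), then \(\varphi(a)=\varphi(a\wedge b)=\min\{\varphi(a),\varphi(b)\}\le\varphi(b)\). This is exactly the first paragraph of the proof of Theorem~\ref{teo_sharp_observable}.

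\emph{Step 2 (from (1) to (2)).} Take \(A\) intrinsically convex, together with \(x,z\in X\) and \(y\in[x,z]_{\Gamma}\). Then \(A(y)\ge A(x)\wedge A(z)\). By monotonicity and the identity,
\[
\varphi(A(y))\ge\varphi(A(x)\wedge A(z))=\min\{\varphi(A(x)),\varphi(A(z))\}.
\]
Theorem~\ref{teo_obs_local} then gives \(\varphi\)-convexity. I would not cite Theorem~\ref{teo_sharp_observable} as a black box here, because its statement assumes a nontrivial segment. Instead I would repeat its sufficiency argument, which uses nothing of the sort.

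\emph{Step 3 (from (2) to (1)).} Assume \(A\) is \(\varphi\)-convex, and fix \(x,z\) and \(y\in[x,z]_{\Gamma}\). Theorem~\ref{teo_obs_local} gives
\[
\varphi(A(y))\ge\min\{\varphi(A(x)),\varphi(A(z))\}=\varphi(A(x)\wedge A(z)).
\]
Order reflection then yields \(A(x)\wedge A(z)\le A(y)\), so \(A\) is intrinsically convex.

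There is no genuine obstacle. The only point requiring care is bookkeeping: one must confirm that every earlier result invoked, namely Theorem~\ref{teo_obs_local} and the segment-generated local characterization, holds for an arbitrary point-convex segment-generated space. One must also avoid invoking the statements of Theorems~\ref{teo_sharp_observable} and~\ref{teo_single_observable_exact}, whose hypotheses include the nontrivial segment, and reproduce only their hypothesis-free parts.
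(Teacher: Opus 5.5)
Your proposal is correct and follows essentially the paper's proof. The paper likewise obtains (1)$\Rightarrow$(2) by repeating the monotonicity-plus-meet--minimum argument from the sufficiency part of Theorem~\ref{teo_sharp_observable}, and (2)$\Rightarrow$(1) by the order-reflection step from the first part of the proof of Theorem~\ref{teo_single_observable_exact}, reusing those arguments rather than the statements that carry the nontrivial-segment hypothesis, just as you do.
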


\begin{proof}
The implication from intrinsic convexity to \(\varphi\)-convexity follows as in the proof of Theorem~\ref{teo_sharp_observable}. Conversely, if \(A\) is \(\varphi\)-convex, then the argument in the first part of the proof of Theorem~\ref{teo_single_observable_exact} gives \(A(x)\wedge A(z)\le A(y)\) for all \(x,z\in X\) and \(y\in[x,z]_{\Gamma}\). Hence \(A\) is intrinsically convex.
\end{proof}

Note that, by contrast, the converse implication in Theorem~\ref{teo_single_observable_exact} requires the existence of a nontrivial segment. This assumption is necessary to derive the meet-minimum identity and the order-reflection property from exact scalar reconstruction.


\subsubsection{Reconstruction by families of observables}

\smallskip

When a single observable is not enough, one may consider a family of observables. In this case, we demonstrate that the relevant condition for the family is that it separates the order.

\begin{definition}\label{def_separating}
Let \((L,\leq)\) be a poset. A family of maps \(\Phi=\{\varphi_i:L\to[0,1]\}_{i\in I}\) is said to be \emph{separating} if, for all \(a,b\in L\),
\begin{equation*}
a\le b
\quad\Longleftrightarrow\quad
\varphi_i(a)\le \varphi_i(b)\quad\text{for every }i\in I.
\end{equation*}
\end{definition}

\begin{theorem}\label{teo_exact_family}
Let \((X,\Gamma)\) be a point-convex segment-generated abstract convexity space with a nontrivial segment, let \((L,\leq,\wedge,\vee)\) be a lattice, and let \(\Phi=\{\varphi_i:L\to[0,1]\}_{i\in I}\) be a family of observables. Then the following statements are equivalent:
\begin{enumerate}
    \item for every \(L\)-fuzzy set \(A:X\to L\), \(A\) is intrinsically convex if and only if \(A\) is \(\varphi_i\)-convex for every \(i\in I\);
    \item \(\Phi\) is separating, that is, \(a\leq b\) if and only if \(\varphi_i(a)\leq\varphi_i(b)\) for every \(i\in I\), and each \(\varphi_i\) preserves finite meets as minima, namely
    \[
    \varphi_i(a\wedge b)=\min\{\varphi_i(a),\varphi_i(b)\}
    \]
    for all \(a,b\in L\) and every \(i\in I\).
\end{enumerate}
\end{theorem}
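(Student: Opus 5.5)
The plan is to mirror the proof of Theorem~\ref{teo_single_observable_exact}, replacing the single order-reflection condition by the separating property of the family. The key observation is that \(A\) is \(\varphi_i\)-convex for every \(i\in I\) if and only if, for all \(x,z\in X\) and \(y\in[x,z]_{\Gamma}\), one has \(\varphi_i(A(y))\ge\min\{\varphi_i(A(x)),\varphi_i(A(z))\}\) for every \(i\); this is Theorem~\ref{teo_obs_local} applied componentwise.

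\emph{From (2) to (1).} Each \(\varphi_i\) satisfies the meet-minimum identity. By Theorem~\ref{teo_sharp_observable}, applied to each \(i\) separately, every intrinsically convex \(A\) is \(\varphi_i\)-convex for all \(i\). Conversely, suppose \(A\) is \(\varphi_i\)-convex for every \(i\). Then
\[
\varphi_i(A(y))\ge\min\{\varphi_i(A(x)),\varphi_i(A(z))\}=\varphi_i(A(x)\wedge A(z))
\]
for all \(i\), and the separating property yields \(A(x)\wedge A(z)\le A(y)\). So \(A\) is intrinsically convex. This direction does not use the nontrivial segment.

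\emph{From (1) to (2).} First I would obtain the meet-minimum identity for each fixed \(i\). Statement (1) implies that every intrinsically convex \(A\) is \(\varphi_i\)-convex for each \(i\), since it is \(\varphi_j\)-convex for all \(j\). Theorem~\ref{teo_sharp_observable} (implication (1)\(\Rightarrow\)(2)) then gives \(\varphi_i(a\wedge b)=\min\{\varphi_i(a),\varphi_i(b)\}\), and in particular each \(\varphi_i\) is monotone. The monotonicity yields the forward half of separation: \(a\le b\) implies \(\varphi_i(a)\le\varphi_i(b)\) for all \(i\).

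For the reverse half, suppose \(\varphi_i(a)\le\varphi_i(b)\) for every \(i\). Choose \(p,r\) and \(q\in[p,r]_{\Gamma}\setminus\{p,r\}\), and use the same two-valued profile as before: \(A(q)=b\) and \(A=a\) elsewhere. The verification from Theorem~\ref{teo_single_observable_exact} shows that \(A\) is \(\varphi_i\)-convex for each \(i\). If \(A(x)=A(z)=b\), then \(x=z=q\) and point-convexity gives \(y=q\). Otherwise the minimum is at most \(\varphi_i(a)\le\varphi_i(A(y))\). By (1), \(A\) is intrinsically convex, and evaluating at \(p,r,q\) gives \(a=a\wedge a\le b\).

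I expect no serious obstacle. The only point needing care is that the hypothesis in (1) is a joint condition over the family, so the reduction to Theorem~\ref{teo_sharp_observable} for a single \(\varphi_i\) has to go through the one-directional implication "intrinsic implies \(\varphi_i\)-convex". It cannot use the full equivalence, because that equivalence need not hold for an individual observable.
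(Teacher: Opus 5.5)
Your proposal is correct and follows the paper's proof essentially step for step. The direction (2)$\Rightarrow$(1) uses Theorem~\ref{teo_sharp_observable} and the separating property. The direction (1)$\Rightarrow$(2) extracts the meet-minimum identity, hence monotonicity, from the one-directional implication ``intrinsic implies $\varphi_i$-convex'', and then uses the two-valued profile $A(q)=b$, $A=a$ elsewhere to obtain the reverse half of separation.
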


\begin{proof}
Assume first (2). If \(A\) is intrinsically convex, then, for every \(i\in I\), Theorem~\ref{teo_sharp_observable} implies that \(A\) is \(\varphi_i\)-convex. Conversely, assume that \(A\) is \(\varphi_i\)-convex for every \(i\in I\). Given \(x,z\in X\) and \(y\in[x,z]_{\Gamma}\), Theorem~\ref{teo_obs_local} gives \(\varphi_i(A(y))\geq \min\{\varphi_i(A(x)),\varphi_i(A(z))\}=\varphi_i(A(x)\wedge A(z))\) for every \(i\in I\). Since \(\Phi\) is separating, \(A(x)\wedge A(z)\leq A(y)\). Hence \(A\) is intrinsically convex.

Conversely, assume (1). Since every intrinsically convex \(L\)-fuzzy set is \(\varphi_i\)-convex for every \(i\in I\), Theorem~\ref{teo_sharp_observable} gives the meet-minimum identity for each \(\varphi_i\). In particular, every \(\varphi_i\) is monotone. It remains to show that \(\Phi\) is separating. The implication \(a\leq b\Rightarrow \varphi_i(a)\leq\varphi_i(b)\) for every \(i\in I\) now follows from the monotonicity of each \(\varphi_i\). For the converse, suppose that \(\varphi_i(a)\leq\varphi_i(b)\) for every \(i\in I\). Choose \(p,r\in X\) and \(q\in[p,r]_{\Gamma}\setminus\{p,r\}\), and define \(A:X\to L\) by \(A(q)=b\) and \(A(x)=a\) for every \(x\neq q\). For each \(i\in I\), the map \(A\) is \(\varphi_i\)-convex: if both endpoints have value \(b\), then they are both equal to \(q\) and point-convexity gives only the trivial segment; otherwise at least one endpoint has value \(a\), and the inequality follows from \(\varphi_i(a)\leq\varphi_i(b)\). By (1), \(A\) is intrinsically convex. Applying intrinsic convexity to \(p,r\) and \(q\in[p,r]_{\Gamma}\), we obtain \(a=A(p)\wedge A(r)\leq A(q)=b\). Thus \(\Phi\) is separating.
\end{proof}

Observe that Theorem~\ref{teo_exact_family} provides the bridge between observable convexity and order representation. It shows that reconstructing intrinsic convexity by a family of scalar observables necessarily requires the same family to separate the codomain order. Hence, any obstruction to a finite scalar representation of the codomain order becomes an obstruction to a finite scalar reconstruction of intrinsic convexity.

\begin{corollary}\label{cor_dimension_obstruction}
Let \((X,\Gamma)\) be a point-convex segment-generated abstract convexity space with a nontrivial segment, and let \((L,\leq,\wedge,\vee)\) be a lattice. Assume that, for every \(m\in\mathbb N\), the poset \(L\) contains a subposet of order dimension greater than \(m\). Then no finite family of observables from \(L\) to \([0,1]\) can reconstruct intrinsic convexity on all \(L\)-fuzzy sets.
\end{corollary}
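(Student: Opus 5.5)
Suppose, for contradiction, that a finite family \(\Phi=\{\varphi_1,\dots,\varphi_m\}\) of observables reconstructs intrinsic convexity, that is, for every \(A:X\to L\), \(A\) is intrinsically convex if and only if \(A\) is \(\varphi_i\)-convex for every \(i\). Since \((X,\Gamma)\) has a nontrivial segment, Theorem~\ref{teo_exact_family} applies. It gives that \(\Phi\) is separating, i.e.\ \(a\le b\iff \varphi_i(a)\le\varphi_i(b)\) for all \(i=1,\dots,m\). It also gives that each \(\varphi_i\) is monotone and meet-preserving, although only separation will be needed.

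The key step is to turn a finite separating family into a realizer of \(L\) by at most \(m\) linear orders. Separation says exactly that the map \(\Psi=(\varphi_1,\dots,\varphi_m):L\to[0,1]^m\) is an order embedding into the product order, meaning \(a\le b\iff\Psi(a)\le\Psi(b)\) componentwise. Each \(\varphi_i\) induces only a total preorder \(\preccurlyeq_{\varphi_i}\), not a linear order, so I would refine it on a subposet \(P\subseteq L\) into a linear extension \(\le_i\) of \(P\). To do this, order by \(\varphi_i\) first and break ties among elements with equal \(\varphi_i\)-value by a fixed linear extension \(\lhd\) of \(P\).

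Each \(\le_i\) extends \(\le\) on \(P\). If \(a<b\), then \(\varphi_i(a)\le\varphi_i(b)\) by monotonicity, and if equality holds then \(a\lhd b\). Moreover, \(\bigcap_i\le_i\;=\;\le\) on \(P\). If \(a\le_i b\) for all \(i\), then for each \(i\) we have either \(\varphi_i(a)<\varphi_i(b)\), or \(\varphi_i(a)=\varphi_i(b)\) with \(a\lhd b\). In both cases \(\varphi_i(a)\le\varphi_i(b)\) for all \(i\), so separation gives \(a\le b\). Hence \(\{\le_1,\dots,\le_m\}\) is a realizer of \(P\), and \(\dim P\le m\).

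Finally, apply the hypothesis to obtain a subposet \(P\subseteq L\) of order dimension greater than \(m\). The bound \(\dim P\le m\) then contradicts this, so no family of \(m\) observables reconstructs intrinsic convexity. Since \(m\) was arbitrary, no finite family does.

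The main obstacle is the tie-breaking step. Scalar observables give preorders, and a careless refinement of ties could make \(\bigcap_i\le_i\) strictly larger than \(\le\). I expect the fixed common linear extension \(\lhd\) to handle this uniformly, because separation is used only on the componentwise weak inequalities that survive in every coordinate. For an infinite \(P\), the existence of \(\lhd\) follows from the Szpilrajn extension theorem.
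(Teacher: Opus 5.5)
Your proposal is correct and follows the paper's proof: Theorem~\ref{teo_exact_family} forces the family to be separating, so \(a\mapsto(\varphi_1(a),\dots,\varphi_m(a))\) order-embeds \(L\), and hence every subposet of \(L\), into a product of \(m\) chains, which contradicts the dimension hypothesis. Your lexicographic tie-breaking with a common linear extension only makes explicit the standard step from such an embedding to a realizer of size \(m\); the paper omits that step because it defines order dimension directly through embeddings into products of chains.
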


\begin{proof}
If such a finite family reconstructed intrinsic convexity on all \(L\)-fuzzy sets, then Theorem~\ref{teo_exact_family} would imply that it separates the order of \(L\). Hence \(L\) would order-embed into a product of \(m\) chains through \(a\mapsto(\varphi_1(a),\ldots,\varphi_m(a))\). Therefore every subposet of \(L\) would have order dimension at most \(m\), contradicting the hypothesis.
\end{proof}

As shown in Section~\ref{sec:hesitant}, Corollary~\ref{cor_dimension_obstruction} provides the abstract criterion behind the main non-representability result of the article. Once the relevant codomain lattice contains finite subposets of arbitrarily large order dimension, no finite family of scalar observables can reconstruct the associated intrinsic convexity.


\smallskip

Despite the previous limitation, for finite domains, the convexity of a fixed \(L\)-fuzzy set admits a finite certificate. However, this does not represent the whole codomain order; it only provides a finite test for one profile.

\begin{theorem}\label{teo_finite}
Let \(X=\{x_1,\ldots,x_n\}\) be a finite set endowed with a point-convex segment-generated abstract convexity \(\Gamma\), let \((L,\leq,\wedge,\vee)\) be a lattice, and let \(A:X\to L\). Define
\begin{equation*}
M_A:=\{A(x_i)\wedge A(x_j): 1\le i\le j\le n\}.
\end{equation*}
Then the following statements are equivalent:
\begin{enumerate}
    \item \(A\) is intrinsically convex;
    \item for every \(a\in M_A\), the principal cut \([A]_a=\{x\in X:\ a\le A(x)\}\) is convex;
    \item for every \(a\in M_A\), the mapping \(A\) is \(\eta_a\)-convex, where \(\eta_a(u)=1\) if \(a\le u\) and \(\eta_a(u)=0\) otherwise.
\end{enumerate}
In particular, intrinsic convexity can be verified using at most \(n(n+1)/2\) levels.
\end{theorem}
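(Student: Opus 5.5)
We prove the cycle of implications (1)\(\Rightarrow\)(2)\(\Rightarrow\)(1) and (2)\(\Leftrightarrow\)(3), reusing the argument of Theorem~\ref{teo_princ_cuts} and the cut description of observable convexity.

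For (1)\(\Rightarrow\)(2), we apply Theorem~\ref{teo_princ_cuts} directly: intrinsic convexity makes every principal cut \([A]_a\), \(a\in L\), convex, in particular those with \(a\in M_A\).

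For (2)\(\Rightarrow\)(1), we copy the converse half of the proof of Theorem~\ref{teo_princ_cuts}, noting that it only ever uses levels of the form \(A(x)\wedge A(z)\). Take \(x,z\in X\) and \(y\in[x,z]_{\Gamma}\). Write \(x=x_i\) and \(z=x_j\), and after swapping if necessary assume \(i\le j\). This is harmless because \([x,z]_\Gamma=[z,x]_\Gamma\) (both are \(\operatorname{co}_\Gamma(\{x,z\})\)) and the meet is commutative. Then
\[
a:=A(x)\wedge A(z)\in M_A,
\]
and \(x,z\in[A]_a\). Convexity of this cut gives \([x,z]_\Gamma\subseteq[A]_a\), hence \(A(y)\ge a\). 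The case \(i=j\) is covered by including the diagonal in \(M_A\), so no appeal to point-convexity is needed.

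For (2)\(\Leftrightarrow\)(3), fix \(a\in M_A\) and compute the observable cuts of \(\eta_a\):
\[
[A]^{\eta_a}_\alpha=
\begin{cases}
X, & \alpha=0,\\
[A]_a, & 0<\alpha\le 1.
\end{cases}
\]
This holds because \(\eta_a\) takes only the values \(0\) and \(1\), and \(\eta_a(A(x))=1\) exactly when \(a\le A(x)\). Since \(X\in\Gamma\), Definition~\ref{def_observable_convex} says that \(A\) is \(\eta_a\)-convex if and only if \([A]_a\in\Gamma\). This gives the equivalence levelwise.

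Finally, \(M_A\) is indexed by pairs \(1\le i\le j\le n\), so \(|M_A|\le n(n+1)/2\), which is the stated count. There is no real obstacle here. The only points to check are the symmetry of segments, which justifies the ordering \(i\le j\), and the trivial \(\alpha=0\) cut in the \(\eta_a\) computation.
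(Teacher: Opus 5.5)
Your proof is correct and follows essentially the same route as the paper: (1)$\Rightarrow$(2) via Theorem~\ref{teo_princ_cuts}, (2)$\Rightarrow$(1) by rerunning its converse with the level $A(x_i)\wedge A(x_j)\in M_A$, and (2)$\Leftrightarrow$(3) by computing the cuts of the two-valued observable $\eta_a$, where you also make explicit that $X\in\Gamma$ handles the $\alpha=0$ cut. The swap to arrange $i\le j$ is harmless but unnecessary, since commutativity of $\wedge$ already places the level in $M_A$.
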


\begin{proof}
The implication (1)\(\Rightarrow\)(2) follows from Theorem~\ref{teo_princ_cuts}. Assume (2). Let \(x_i,x_j\in X\), let \(y\in[x_i,x_j]_{\Gamma}\), and set
\begin{equation*}
a:=A(x_i)\wedge A(x_j).
\end{equation*}
Then \(a\in M_A\), and \(a\le A(x_i)\), \(a\le A(x_j)\), so \(x_i,x_j\in[A]_a\). By convexity of \([A]_a\), we get \([x_i,x_j]_{\Gamma}\subseteq[A]_a\), and therefore \(y\in[A]_a\). Thus
\begin{equation*}
A(y)\ge a=A(x_i)\wedge A(x_j).
\end{equation*}
Hence \(A\) is intrinsically convex.

Finally, fix \(a\in M_A\). The map \(\eta_a\) only takes the values \(0\) and \(1\). Hence \([A]^{\eta_a}_0=X\), while \([A]^{\eta_a}_\alpha=[A]_a\) for every \(\alpha\in(0,1]\). Therefore \(A\) is \(\eta_a\)-convex if and only if \([A]_a\) is convex. This proves the equivalence between (2) and (3).

The set \(M_A\) contains at most one element for each unordered pair \(\{i,j\}\), with \(1\le i\le j\le n\). Since the number of such pairs is \(n(n+1)/2\), at most \(n(n+1)/2\) levels are needed.
\end{proof}

\subsection{Intrinsic convexity versus lattice-order convexity}

\smallskip

We now compare intrinsic convexity with the particular relational notion obtained from the lattice order itself. This comparison clarifies why the intrinsic and relational viewpoints coincide in chains, but not in general lattices.

\begin{theorem}\label{teo_order_intrinsic}
Let \((L,\leq,\wedge,\vee)\) be a lattice, and let \(\leq\) denote its lattice order. Then:
\begin{enumerate}
    \item for every point-convex segment-generated abstract convexity space \((X,\Gamma)\), every \(\leq\)-convex \(L\)-fuzzy set \(A:X\to L\) is intrinsically convex;
    \item if \(L\) is totally ordered, then for every point-convex segment-generated abstract convexity space \((X,\Gamma)\) and every \(L\)-fuzzy set \(A:X\to L\), \(A\) is intrinsically convex if and only if \(A\) is \(\leq\)-convex;
    \item conversely, if \((X,\Gamma)\) has a nontrivial segment and, for every \(L\)-fuzzy set \(A:X\to L\), \(A\) is intrinsically convex if and only if \(A\) is \(\leq\)-convex, then \(L\) is totally ordered.
\end{enumerate}
\end{theorem}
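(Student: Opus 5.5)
All three parts go through the local characterizations, Theorem~\ref{teo_orden_local} for \(\le\)-convexity and Definition~\ref{defintrinsic} for intrinsic convexity.

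For part (1), let \(A\) be \(\le\)-convex, and fix \(x,z\in X\) and \(y\in[x,z]_{\Gamma}\). By Theorem~\ref{teo_orden_local}, either \(A(x)\le A(y)\) or \(A(z)\le A(y)\). In both cases \(A(x)\wedge A(z)\le A(y)\), because the meet lies below each of its arguments. Hence \(A\) is intrinsically convex.

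For part (2), one direction is part (1). For the other, assume \(L\) is a chain and \(A\) is intrinsically convex. Then \(A(x)\wedge A(z)\) equals either \(A(x)\) or \(A(z)\). The inequality \(A(y)\ge A(x)\wedge A(z)\) therefore says exactly that \(A(x)\le A(y)\) or \(A(z)\le A(y)\). By Theorem~\ref{teo_orden_local}, \(A\) is \(\le\)-convex.

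For part (3), argue by contraposition. Suppose \(L\) is not totally ordered, and pick incomparable \(a,b\in L\). Choose \(p,r\in X\) and \(q\in[p,r]_{\Gamma}\setminus\{p,r\}\), and set \(m:=a\wedge b\). Mimicking the second construction in the proof of Theorem~\ref{teo_sharp_observable}, define
\[
A(p)=a,\qquad A(r)=b,\qquad A(x)=m\ \text{ for every } x\in X\setminus\{p,r\}.
\]
The case analysis already carried out there shows that \(A\) is intrinsically convex. The only delicate case is when the endpoints are \(p\) and \(r\) themselves; there the meet is \(m\), which lies below every value of \(A\). In the remaining cases either a degenerate segment arises, handled by point-convexity, or one endpoint takes the value \(m\).

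Now test the pair \(p,r\) with the intermediate point \(q\), where \(A(q)=m\). We need \(a\le m\) or \(b\le m\). If \(a\le m=a\wedge b\), then \(a\le b\), contradicting incomparability; symmetrically for \(b\). So \(A\) fails the local condition of Theorem~\ref{teo_orden_local} and is not \(\le\)-convex, which contradicts the assumed equivalence. Hence \(L\) must be a chain.

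The only step needing care is the verification in part (3) that \(A\) is intrinsically convex in every configuration of endpoints. I would simply cite the case analysis already given in the proof of Theorem~\ref{teo_sharp_observable}, since that argument nowhere used any property of \(a,b\) beyond \(m=a\wedge b\).
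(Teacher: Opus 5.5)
Your proposal is correct and follows the paper's proof essentially step for step. Part (1) goes through the local characterization of Theorem~\ref{teo_orden_local}, part (2) identifies \(A(x)\wedge A(z)\) with the smaller endpoint value in a chain, and part (3) uses the same profile \(A(p)=a\), \(A(r)=b\), \(A\equiv a\wedge b\) elsewhere, with its intrinsic convexity borrowed from the proof of Theorem~\ref{teo_sharp_observable}; the only cosmetic difference is that in part (2) you reuse part (1) for one direction, whereas the paper states both directions at once.
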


\begin{proof}
For (1), let \(A:X\to L\) be \(\leq\)-convex, let \(x,z\in X\), and let \(y\in[x,z]_{\Gamma}\). By Theorem~\ref{teo_orden_local}, either \(A(x)\le A(y)\) or \(A(z)\le A(y)\). In either case, \(A(x)\wedge A(z)\le A(y)\). Hence \(A\) is intrinsically convex.

For (2), assume that \(L\) is totally ordered. Then \(A(x)\wedge A(z)=\min_{\le}\{A(x),A(z)\}\), so \(A(y)\ge A(x)\wedge A(z)\) holds if and only if \(A(x)\le A(y)\) or \(A(z)\le A(y)\). By Theorem~\ref{teo_orden_local}, this is exactly \(\leq\)-convexity.

For (3), assume that intrinsic convexity and \(\leq\)-convexity coincide for all \(A:X\to L\), and suppose that \(L\) is not totally ordered. Then there exist incomparable elements \(a,b\in L\). Choose \(p,r\in X\) and \(q\in[p,r]_{\Gamma}\setminus\{p,r\}\). Set \(m:=a\wedge b\), and define \(A:X\to L\) by
\begin{equation*}
A(p)=a,\qquad A(r)=b,\qquad A(x)=m\quad\text{for every }x\in X\setminus\{p,r\}.
\end{equation*}
As in the proof of Theorem~\ref{teo_sharp_observable}, \(A\) is intrinsically convex. However, \(A\) is not \(\leq\)-convex, because applying Theorem~\ref{teo_orden_local} to \(p,r\) and \(q\in[p,r]_{\Gamma}\) would require
\begin{equation*}
a\le a\wedge b
\quad\text{or}\quad
b\le a\wedge b,
\end{equation*}
which is impossible for incomparable \(a,b\). Therefore \(L\) must be totally ordered.
\end{proof}

Thus, intrinsic convexity and lattice-order convexity coincide for every profile whenever the codomain lattice is totally ordered. Conversely, on domains with a nontrivial segment, this coincidence for all \(L\)-fuzzy sets characterizes totality of the codomain order. This explains why scalar or order-based formulations behave well in chain-like settings, but not necessarily in genuinely lattice-valued ones.

\begin{corollary}\label{cororlario_lattice_order_total}
Let \((X,\Gamma)\) be a point-convex segment-generated abstract convexity space, and let \((L,\leq,\wedge,\vee)\) be a totally ordered lattice. Then, for every \(L\)-fuzzy set \(A:X\to L\), the following statements are equivalent:
\begin{enumerate}
    \item \(A\) is intrinsically convex;
    \item \(A\) is \(\leq\)-convex.
\end{enumerate}
\end{corollary}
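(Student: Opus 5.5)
This corollary is the special case of Theorem~\ref{teo_order_intrinsic}(2), so the plan is simply to invoke that item, with a short self-contained check as backup. The corollary assumes no nontrivial segment, and item (2) of the theorem does not need one either. So no extra hypothesis on \((X,\Gamma)\) has to be verified.

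For a direct argument, I would work at the local level. First, the implication (2)\(\Rightarrow\)(1) holds for any lattice by Theorem~\ref{teo_order_intrinsic}(1). Indeed, Theorem~\ref{teo_orden_local} gives \(A(x)\le A(y)\) or \(A(z)\le A(y)\) for every \(x,z\in X\) and \(y\in[x,z]_{\Gamma}\). In either case \(A(x)\wedge A(z)\le A(y)\).

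For (1)\(\Rightarrow\)(2), I would use totality. Since \(L\) is a chain, \(A(x)\wedge A(z)\) equals one of \(A(x)\) or \(A(z)\). Intrinsic convexity gives \(A(y)\ge A(x)\wedge A(z)\), so either \(A(x)\le A(y)\) or \(A(z)\le A(y)\). This is exactly the local condition of Theorem~\ref{teo_orden_local}, which therefore yields \(\leq\)-convexity.

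There is no real obstacle here. The only point to watch is that the characterization in Theorem~\ref{teo_orden_local} uses only that \((X,\Gamma)\) is segment-generated, so the statement holds for every domain.
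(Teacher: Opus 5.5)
Your proposal is correct and follows the paper's route: the paper proves this corollary simply by citing Theorem~\ref{teo_order_intrinsic}. Your self-contained local check, via Theorem~\ref{teo_orden_local} and the fact that in a chain \(A(x)\wedge A(z)\) is one of \(A(x)\), \(A(z)\), is exactly the argument the paper gives for items (1) and (2) of that theorem.
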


\begin{proof}
This follows directly from Theorem~\ref{teo_order_intrinsic}.
\end{proof}

Combining this with Corollary~\ref{corolario_single_observable}, we obtain the scalar case in totally ordered codomains.

\begin{corollary}\label{corolario_single_score}
Let \((X,\Gamma)\) be a point-convex segment-generated abstract convexity space, let \((L,\leq,\wedge,\vee)\) be a totally ordered lattice, and let \(\varphi:L\to[0,1]\) satisfy
\begin{equation*}
a\le b
\quad\Longleftrightarrow\quad
\varphi(a)\le \varphi(b)
\end{equation*}
for all \(a,b\in L\). Then, for every \(L\)-fuzzy set \(A:X\to L\), the following statements are equivalent:
\begin{enumerate}
    \item \(A\) is intrinsically convex;
    \item \(A\) is \(\leq\)-convex;
    \item \(A\) is \(\varphi\)-convex.
\end{enumerate}
\end{corollary}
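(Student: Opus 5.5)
The equivalence of (1) and (2) is exactly Corollary~\ref{cororlario_lattice_order_total}, which holds because \(L\) is totally ordered. So the plan is to link (3) to either of these. I will establish (1)\(\Leftrightarrow\)(3) by invoking Corollary~\ref{corolario_single_observable}. This needs two hypotheses on \(\varphi\): the meet-minimum identity \(\varphi(a\wedge b)=\min\{\varphi(a),\varphi(b)\}\), and order reflection.

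Order reflection is literally the backward direction of the assumed biconditional \(a\le b\Leftrightarrow\varphi(a)\le\varphi(b)\). The forward direction gives monotonicity of \(\varphi\).

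For the meet-minimum identity I will use totality of \(L\). Given \(a,b\in L\), without loss of generality \(a\le b\), so \(a\wedge b=a\). Monotonicity gives \(\varphi(a)\le\varphi(b)\), hence \(\min\{\varphi(a),\varphi(b)\}=\varphi(a)=\varphi(a\wedge b)\). The symmetric case is identical.

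With both hypotheses verified, Corollary~\ref{corolario_single_observable} yields (1)\(\Leftrightarrow\)(3) for every \(A\), with no nontrivial-segment assumption needed. Combined with (1)\(\Leftrightarrow\)(2), this closes the cycle.

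As an alternative check, (2)\(\Leftrightarrow\)(3) can be seen directly from Theorem~\ref{teo_observable_induced_order}. The biconditional says that \(\preccurlyeq_\varphi\) coincides with \(\le\), so \(\varphi\)-convexity is \(\preccurlyeq_\varphi\)-convexity, which is \(\le\)-convexity. There is no real obstacle. The only point needing care is that the meet-minimum identity is not assumed in the statement and must be derived from totality, which the case split above handles.
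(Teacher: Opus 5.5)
Your proposal is correct and matches the paper's proof: it derives the meet-minimum identity from totality plus monotonicity, then applies Corollary~\ref{cororlario_lattice_order_total} for (1)\(\Leftrightarrow\)(2) and Corollary~\ref{corolario_single_observable} for (1)\(\Leftrightarrow\)(3). Your alternative direct check of (2)\(\Leftrightarrow\)(3) via Theorem~\ref{teo_observable_induced_order} is also valid, since the hypothesis makes \(\preccurlyeq_{\varphi}\) coincide with \(\le\).
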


\begin{proof}
Since \(L\) is totally ordered and \(\varphi\) preserves and reflects the order, one has
\begin{equation*}
\varphi(a\wedge b)=\min\{\varphi(a),\varphi(b)\}
\end{equation*}
for all \(a,b\in L\). The equivalence between (1) and (2) follows from Corollary~\ref{cororlario_lattice_order_total}, and the equivalence between (1) and (3) follows from Corollary~\ref{corolario_single_observable}.
\end{proof}

The following example illustrates the scalar reconstruction obtained in Corollary~\ref{corolario_single_score} for a totally ordered codomain.

\begin{example}\label{ex5}
Let \(L=\{0<u<v<1\}\) be the four-element chain, endowed with its usual lattice structure, and define an observable \(\varphi:L\to[0,1]\) by
\begin{equation*}
\varphi(0)=0,\qquad \varphi(u)=0.4,\qquad \varphi(v)=0.7,\qquad \varphi(1)=1.
\end{equation*}
Then \(\varphi\) preserves and reflects the order. Since \(L\) is totally ordered, Corollary~\ref{corolario_single_score} applies.

Let \(X=\{x_1<x_2<x_3<x_4<x_5\}\), endowed with its usual order convexity, and define \(A:X\to L\) by
\begin{equation*}
A(x_1)=0,\qquad A(x_2)=u,\qquad A(x_3)=v,\qquad A(x_4)=u,\qquad A(x_5)=0.
\end{equation*}
Then \(A\) is intrinsically convex, equivalently \(\leq\)-convex and \(\varphi\)-convex. Indeed, its principal cuts are convex; for instance,
\begin{equation*}
[A]_u=\{x_2,x_3,x_4\},
\qquad
[A]_v=\{x_3\}.
\end{equation*}
The observable profile is
\begin{equation*}
0,\quad 0.4,\quad 0.7,\quad 0.4,\quad 0,
\end{equation*}
which satisfies the usual min-based convexity condition.
\end{example}


\section{Pointwise operators and preservation of intrinsic convexity}\label{sec:pointwise}

The previous section identified the order-theoretic conditions under which scalar observables preserve or reconstruct intrinsic convexity. We now address the complementary preservation problem of whether intrinsic convexity is stable under pointwise operators acting directly on the lattice-valued codomain. This question is relevant since, in several fuzzy contexts, profiles are often combined through aggregation, conjunction, disjunction, or other codomain operations. Related preservation problems for aggregation operators and fuzzy quasiconvexity were previously studied by \citet{JanisKralRencova2013}. Following the approach to induced operators on lattices developed in \citet{LobilloMerinoNavarroSantos2021,MerinoNavarroSantos2022}, we characterize when pointwise combinations preserve intrinsic convexity in the present lattice-valued framework. The result below shows that preservation is governed by compatibility with the meet operation of the codomain lattice. In fact, this compatibility already forces monotonicity of the operator, while monotonicity alone is not sufficient. In any case, no completeness or distributivity assumption on \(L\) is required.

\begin{definition}\label{def_induced_operator}
Let \(m\geq 2\), let \(X\) be a nonempty set, let \((L,\leq,\wedge,\vee)\) be a lattice, and let \(T:L^m\to L\) be an \(m\)-ary operator. The \emph{operator induced by \(T\)} on \(L^X\) is the map
\[
T_*:\big(L{^X}\big)^{m} \to L^X
\]
defined by
\[
T_*(A_1,\dots,A_m)(x):=T(A_1(x),\dots,A_m(x)),
\qquad x\in X.
\]
\end{definition}

\begin{theorem}\label{teo_sharp_induced}
Let \(m\geq 2\), let \((X,\Gamma)\) be a point-convex segment-generated abstract convexity space with a nontrivial segment, let \((L,\leq,\wedge,\vee)\) be a lattice, and let \(T:L^m\to L\) be an \(m\)-ary operator. Then the following statements are equivalent:
\begin{enumerate}
    \item for every family of intrinsically convex \(L\)-fuzzy sets \(A_1,\dots,A_m:X\to L\), the induced \(L\)-fuzzy set \(T_*(A_1,\dots,A_m):X\to L\) is intrinsically convex;
    \item for all \(a_1,\dots,a_m,b_1,\dots,b_m\in L\), one has
    \[
    T(a_1\wedge b_1,\dots,a_m\wedge b_m)
    =
    T(a_1,\dots,a_m)\wedge T(b_1,\dots,b_m).
    \]
\end{enumerate}
\end{theorem}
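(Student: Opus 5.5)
The plan is to mirror the proof of Theorem~\ref{teo_sharp_observable}, with the scalar observable replaced by the operator \(T\). Throughout I write \(\mathbf a=(a_1,\dots,a_m)\) and \(\mathbf b=(b_1,\dots,b_m)\), and I use the componentwise meet \(\mathbf a\wedge\mathbf b=(a_1\wedge b_1,\dots,a_m\wedge b_m)\) and componentwise order on \(L^m\). The first step is an observation used in both directions: condition (2) forces \(T\) to be monotone. Indeed, if \(\mathbf c\le\mathbf d\) componentwise, then \(\mathbf c=\mathbf c\wedge\mathbf d\), so
\[
T(\mathbf c)=T(\mathbf c)\wedge T(\mathbf d)\le T(\mathbf d).
\]

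For (2)\(\Rightarrow\)(1), let \(A_1,\dots,A_m\) be intrinsically convex, let \(x,z\in X\) and \(y\in[x,z]_{\Gamma}\). Intrinsic convexity gives \(A_i(y)\ge A_i(x)\wedge A_i(z)\) for each \(i\). By monotonicity of \(T\) and then by (2),
\[
T_*(A_1,\dots,A_m)(y)\ge T\bigl(A_1(x)\wedge A_1(z),\dots\bigr)=T_*(A_1,\dots,A_m)(x)\wedge T_*(A_1,\dots,A_m)(z).
\]
This is exactly intrinsic convexity of \(T_*(A_1,\dots,A_m)\). No completeness or distributivity of \(L\) is used.

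For (1)\(\Rightarrow\)(2), I fix \(p,r\in X\) and \(q\in[p,r]_{\Gamma}\setminus\{p,r\}\). I first extract monotonicity of \(T\) from (1). Given \(\mathbf c\le\mathbf d\), define \(B_i(q)=d_i\) and \(B_i(x)=c_i\) for \(x\ne q\). Each \(B_i\) is intrinsically convex by the same case analysis as in the proof of Theorem~\ref{teo_sharp_observable}: if both endpoints equal \(q\), point-convexity makes the segment trivial; otherwise the meet of the endpoint values is at most \(c_i\), which is at most every value of \(B_i\). By (1), \(T_*(B_1,\dots,B_m)\) is intrinsically convex. Evaluating at \(q\in[p,r]_{\Gamma}\) gives
\[
T(\mathbf d)\ge T(\mathbf c)\wedge T(\mathbf c)=T(\mathbf c).
\]
Next, given \(\mathbf a,\mathbf b\), define \(A_i(p)=a_i\), \(A_i(r)=b_i\), and \(A_i(x)=a_i\wedge b_i\) elsewhere. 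These maps are intrinsically convex exactly as shown for the analogous map in Theorem~\ref{teo_sharp_observable}. Applying (1) at the segment \([p,r]_{\Gamma}\ni q\), and using \(A_i(q)=a_i\wedge b_i\) since \(q\ne p,r\), yields
\[
T(\mathbf a\wedge\mathbf b)\ge T(\mathbf a)\wedge T(\mathbf b).
\]
The reverse inequality comes from monotonicity: \(\mathbf a\wedge\mathbf b\le\mathbf a\) and \(\mathbf a\wedge\mathbf b\le\mathbf b\), so \(T(\mathbf a\wedge\mathbf b)\) is a lower bound of \(T(\mathbf a)\) and \(T(\mathbf b)\), hence below their meet. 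Equality follows.

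I do not expect a genuine obstacle here. The only point needing care is that the reverse inequality in (1)\(\Rightarrow\)(2) cannot be read off the single test profile, so monotonicity must be established first via the auxiliary profiles \(B_i\). The nontrivial-segment hypothesis is used precisely so that \(q\) is distinct from both \(p\) and \(r\) and therefore carries the prescribed intermediate values.
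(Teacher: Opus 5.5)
Your proposal is correct and follows the paper's proof essentially step for step. The forward direction uses monotonicity of $T$ derived from (2). The converse first uses the auxiliary profiles $B_i$ (value $d_i$ at $q$, $c_i$ elsewhere) to extract monotonicity, then the profiles $A_i$ (values $a_i$, $b_i$ at $p$, $r$ and $a_i\wedge b_i$ elsewhere) to get $T(\mathbf a\wedge\mathbf b)\ge T(\mathbf a)\wedge T(\mathbf b)$, with the reverse inequality from monotonicity. Your case split for the $B_i$ on positions (whether $u=v=q$) is slightly cleaner than the paper's split on values, which tacitly needs $a_s\neq b_s$.
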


\begin{proof}
Assume first (2). We first observe that \(T\) is monotone with respect to the product order on \(L^m\). Indeed, if \(c_s\leq d_s\) for every \(s=1,\dots,m\), then \(c_s=c_s\wedge d_s\), and therefore
\[
T(c_1,\dots,c_m)
=
T(c_1\wedge d_1,\dots,c_m\wedge d_m)
=
T(c_1,\dots,c_m)\wedge T(d_1,\dots,d_m).
\]
Hence \(T(c_1,\dots,c_m)\leq T(d_1,\dots,d_m)\).

Let \(A_1,\dots,A_m:X\to L\) be intrinsically convex, let \(x,z\in X\), and let \(y\in[x,z]_{\Gamma}\). For each \(s=1,\dots,m\), intrinsic convexity gives \(A_s(y)\geq A_s(x)\wedge A_s(z)\). By the monotonicity of \(T\),
\[
T(A_1(y),\dots,A_m(y))
\geq
T(A_1(x)\wedge A_1(z),\dots,A_m(x)\wedge A_m(z)).
\]
By (2), the right-hand side is equal to
\[
T(A_1(x),\dots,A_m(x))\wedge T(A_1(z),\dots,A_m(z)).
\]
Hence \(T_*(A_1,\dots,A_m)\) is intrinsically convex.

Conversely, assume (1). We first prove that \(T\) is monotone with respect to the product order on \(L^m\). Let \(a_s,b_s\in L\) satisfy \(a_s\leq b_s\) for every \(s=1,\dots,m\). Choose \(p,r\in X\) and \(q\in[p,r]_{\Gamma}\setminus\{p,r\}\). For each \(s=1,\dots,m\), define \(B_s:X\to L\) by
\[
B_s(q)=b_s,
\qquad
B_s(x)=a_s\quad\text{for every }x\in X\setminus\{q\}.
\]
We claim that each \(B_s\) is intrinsically convex. Let \(u,v\in X\) and \(w\in[u,v]_{\Gamma}\). If \(B_s(u)=B_s(v)=b_s\), then \(u=v=q\), and point-convexity gives \(w=q\), so the inequality is immediate. Otherwise, at least one of \(B_s(u)\) and \(B_s(v)\) is equal to \(a_s\), and therefore
\[
B_s(u)\wedge B_s(v)\leq a_s\leq B_s(w).
\]
Hence \(B_s\) is intrinsically convex. By (1), \(T_*(B_1,\dots,B_m)\) is intrinsically convex. Applying intrinsic convexity to \(p,r\) and \(q\in[p,r]_{\Gamma}\), we obtain
\[
T(a_1,\dots,a_m)
=
T_*(B_1,\dots,B_m)(p)
\wedge
T_*(B_1,\dots,B_m)(r)
\leq
T_*(B_1,\dots,B_m)(q)
=
T(b_1,\dots,b_m).
\]
Thus \(T\) is monotone.

Now choose \(p,r\in X\) and \(q\in[p,r]_{\Gamma}\setminus\{p,r\}\). Fix arbitrary elements \(a_1,\dots,a_m,b_1,\dots,b_m\in L\). For each \(s=1,\dots,m\), set \(c_s:=a_s\wedge b_s\), and define \(A_s:X\to L\) by \(A_s(p)=a_s\), \(A_s(r)=b_s\), and \(A_s(x)=c_s\) for every \(x\in X\setminus\{p,r\}\).

We claim that each \(A_s\) is intrinsically convex. Let \(u,v\in X\) and \(w\in[u,v]_{\Gamma}\). If \(u=v=p\) or \(u=v=r\), then point-convexity gives \(w=u\), and the inequality is immediate. If \(\{u,v\}=\{p,r\}\), then \(A_s(u)\wedge A_s(v)=c_s\le A_s(w)\). In all remaining cases, at least one of \(A_s(u)\) and \(A_s(v)\) is equal to \(c_s\), so \(A_s(u)\wedge A_s(v)\le c_s\le A_s(w)\). Hence \(A_s\) is intrinsically convex.

By (1), \(T_*(A_1,\dots,A_m)\) is intrinsically convex. Applying intrinsic convexity to \(p,r\) and \(q\in[p,r]_{\Gamma}\), we obtain
\[
T(a_1\wedge b_1,\dots,a_m\wedge b_m)
\geq
T(a_1,\dots,a_m)\wedge T(b_1,\dots,b_m).
\]
On the other hand, since \(a_s\wedge b_s\le a_s\) and \(a_s\wedge b_s\le b_s\) for every \(s=1,\dots,m\), the monotonicity of \(T\) gives
\[
T(a_1\wedge b_1,\dots,a_m\wedge b_m)
\leq
T(a_1,\dots,a_m)
\]
and
\[
T(a_1\wedge b_1,\dots,a_m\wedge b_m)
\leq
T(b_1,\dots,b_m).
\]
Therefore,
\[
T(a_1\wedge b_1,\dots,a_m\wedge b_m)
\leq
T(a_1,\dots,a_m)\wedge T(b_1,\dots,b_m),
\]
and equality follows.
\end{proof}

Thus, the preservation problem is controlled by the meet-compatibility of the codomain operator. The domain only provides the segments on which convexity is tested. In the classical fuzzy case \(L=[0,1]\), Theorem~\ref{teo_sharp_induced} says that an operator \(T:[0,1]^m\to[0,1]\) preserves min-based fuzzy convexity if and only if
\[
T(\min\{a_1,b_1\},\dots,\min\{a_m,b_m\})
=
\min\{T(a_1,\dots,a_m),T(b_1,\dots,b_m)\}.
\]

The condition is restrictive, but it is not vacuous. The following example shows that it is satisfied by operators other than the meet itself.

\begin{example}\label{ex:positive_operator}
In the classical fuzzy case \(L=[0,1]\), consider the binary operator \(T:[0,1]^2\to[0,1]\) defined by \(T(a,b):=\min\{a^2,b\}\). Then \(T\) is monotone in both variables and satisfies condition~\((2)\) of Theorem~\ref{teo_sharp_induced}. Indeed, for arbitrary \(a_1,a_2,b_1,b_2\in[0,1]\),
\[
\begin{aligned}
&T(\min\{a_1,b_1\},\min\{a_2,b_2\})\\
&\qquad =
\min\{\min\{a_1,b_1\}^2,\min\{a_2,b_2\}\}\\
&\qquad =
\min\{a_1^2,b_1^2,a_2,b_2\}\\
&\qquad =
\min\{T(a_1,a_2),T(b_1,b_2)\}.
\end{aligned}
\]
Hence, the pointwise operator induced by \(T\) preserves classical min-based fuzzy convexity.
\end{example}

Nevertheless, it should be noted that monotonicity alone does not guarantee this condition, even for standard aggregation operators.

\begin{example}\label{ex6}
Consider the arithmetic mean \(T:[0,1]^2\to[0,1]\), given by \(T(a,b)=(a+b)/2\). This operator is monotone in each variable. However, it does not satisfy condition (2) of Theorem~\ref{teo_sharp_induced}. Indeed, taking \((a_1,a_2)=(1,0)\) and \((b_1,b_2)=(0,1)\), we get
\(T(\min\{a_1,b_1\},\min\{a_2,b_2\})=T(0,0)=0\), whereas
\(T(a_1,a_2)\wedge T(b_1,b_2)=\frac12\wedge\frac12=\frac12\).

The failure can also be seen directly. Let \(X=\{x_1<x_2<x_3\}\), endowed with its usual order convexity, and define \(A_1,A_2:X\to[0,1]\) by
\(A_1(x_1)=1\), \(A_1(x_2)=0\), \(A_1(x_3)=0\), and
\(A_2(x_1)=0\), \(A_2(x_2)=0\), \(A_2(x_3)=1\).
Both \(A_1\) and \(A_2\) are intrinsically convex. However, their pointwise arithmetic mean satisfies
\(T_*(A_1,A_2)(x_1)=\frac12\), \(T_*(A_1,A_2)(x_2)=0\), and
\(T_*(A_1,A_2)(x_3)=\frac12\). Hence
\[
T_*(A_1,A_2)(x_2)
<
\min\{T_*(A_1,A_2)(x_1),T_*(A_1,A_2)(x_3)\}.
\]
Therefore the arithmetic mean does not preserve classical min-based fuzzy convexity.
\end{example}

The example shows that pointwise aggregation may destroy intrinsic convexity even in the classical fuzzy case. Thus, operations that are natural from a numerical or aggregative viewpoint need not be faithful to the order-theoretic structure that determines intrinsic convexity. 


\section{Symmetric convexity on hesitant fuzzy sets}\label{sec:hesitant}

In this section, we specialize the previous framework to hesitant fuzzy sets. This specialization is the point at which the distinction between intrinsic and scalarized convexity becomes most relevant: the symmetric order on hesitant fuzzy elements carries order-theoretic information that cannot be represented globally by finitely many scalar observables.

\smallskip

For consistency, we use the following notation, which is standard in the literature \citep{Alcantud,AlcantudTorra19,MerinoNavarroSalvatierraSantos2026}:
\begin{itemize}
    \item \(P^*([0,1])\) denotes the family of all nonempty subsets of \([0,1]\);
    \item \(F^*([0,1])\) denotes the family of all nonempty finite subsets of \([0,1]\).
\end{itemize}
Elements of \(P^*([0,1])\) will be called hesitant fuzzy elements (HFEs), while elements of \(F^*([0,1])\) are typical hesitant fuzzy elements (THFEs) \cite[Def.~3.1]{Bedregal2014}. In particular, \(F^*([0,1])\subseteq P^*([0,1])\).

\smallskip

We work with the symmetric lattice
\[
\bigl(P^*([0,1]),\leq^0,\wedge_0,\vee_0\bigr)
\]
introduced in \citep{JMNS22}. This lattice contains, as natural substructures, the usual fuzzy values and the interval-valued fuzzy values. Hence it allows us to treat classical fuzzy sets, interval-valued fuzzy sets, and HFSs within the same order-theoretic setting.

\smallskip

We recall that, for \(H,K\in P^*([0,1])\), one has \(H\leq^0 K\) if and only if
\[
H<K\setminus H
\quad\text{and}\quad
H\setminus K<K,
\]
where, for \(U,V\subseteq[0,1]\), \(U<V\) means that \(u<v\) for every \(u\in U\) and every \(v\in V\). As usual, this condition is interpreted vacuously when \(U=\varnothing\) or \(V=\varnothing\). Thus, if \(K\setminus H=\varnothing\), the first condition is automatically satisfied, and if \(H\setminus K=\varnothing\), the second one is automatically satisfied.

\subsection{Symmetric convexity}

\smallskip

We now specialize intrinsic lattice convexity to HFSs endowed with the symmetric lattice. Since this is the central convexity notion in the hesitant setting considered here, we refer to it as \emph{symmetric convexity}.

\begin{definition}\label{definition_symmetric-convex}
Let \((X,\Gamma)\) be a point-convex segment-generated abstract convexity space. An HFS \(A:X\to P^*([0,1])\) is said to be \emph{symmetrically convex} if
\[
A(y)\geq^0 A(x)\wedge_0 A(z)
\]
for all \(x,z\in X\) and every \(y\in[x,z]_{\Gamma}\).
\end{definition}

In the next example, we illustrate this notion.

\begin{example}\label{ex7}
Let \(X=\{x_1<x_2<x_3<x_4\}\), endowed with its usual order convexity, and define
\[
H_1=[0,0.10]\cup\{0.15\}\cup[0.34,0.40],
\]
\[
H_2=[0,0.10]\cup\{0.15\}\cup[0.20,0.27]\cup[0.58,0.63],
\]
\[
H_3=[0,0.10]\cup\{0.15\}\cup[0.20,0.40]\cup[0.71,0.75],
\]
and
\[
H_4=[0,0.10]\cup\{0.15\}\cup[0.46,0.52].
\]
Consider the HFS \(A:X\to P^*([0,1])\) given by
\[
A(x_1)=H_1,\qquad
A(x_2)=H_2,\qquad
A(x_3)=H_3,\qquad
A(x_4)=H_4.
\]

Applying the decision tree in \cite{JMNS22}, one obtains that the pair \(H_2,H_3\) has the infimum
\[
H_2\wedge_0 H_3
=
K_{23},
\qquad
K_{23}:=[0,0.10]\cup\{0.15\}\cup[0.20,0.27].
\]

Moreover, we have that
\[
H_1\wedge_0 H_3
=
H_1\wedge_0 H_4
=
H_2\wedge_0 H_4
=
K,
\qquad
K:=[0,0.10]\cup\{0.15\}.
\]
By construction, \(K\leq^0 H_\ell\) for every \(\ell=1,\ldots,4\). Hence, for each triple \(x_i<x_j<x_k\), one has
\[
A(x_i)\wedge_0 A(x_k)=K\leq^0 A(x_j).
\]
Indeed, it is satisfied that
\[
H_1\wedge_0 H_3\leq^0 H_2,\qquad
H_1\wedge_0 H_4\leq^0 H_2,\qquad
H_1\wedge_0 H_4\leq^0 H_3,\qquad
H_2\wedge_0 H_4\leq^0 H_3.
\]
If \(x_i=x_k\), then \([x_i,x_i]_{\Gamma}=\{x_i\}\) by point-convexity, and the condition is immediate. Therefore \(A\) is symmetrically convex.

\begin{figure}[htbp]
\centering
\begin{tikzpicture}[
    x=0.82cm,
    y=6.6cm,
    font=\footnotesize,
    interval/.style={line width=3pt,line cap=round},
    point/.style={circle,fill=black,inner sep=1.4pt},
    tick/.style={thin}
]

\draw[thick] (0.55,0) -- (0.55,1.00);

\foreach \t in {0,0.10,0.15,0.20,0.27,0.34,0.40,0.46,0.52,0.58,0.63,0.71,0.75,1}{
  \draw[tick] (0.50,\t) -- (0.60,\t);
}

\node[left=4pt] at (0.55,0.00) {$0$};
\node[left=4pt] at (0.55,0.10) {$0.10$};
\node[left=4pt] at (0.55,0.15) {$0.15$};
\node[left=4pt] at (0.55,0.20) {$0.20$};
\node[left=4pt] at (0.55,0.27) {$0.27$};
\node[left=4pt] at (0.55,1.00) {$1$};

\def\xa{1.55}
\def\xb{2.75}
\def\xc{3.95}
\def\xd{5.15}

\node[below] at (\xa,-0.03) {$x_1$};
\node[below] at (\xb,-0.03) {$x_2$};
\node[below] at (\xc,-0.03) {$x_3$};
\node[below] at (\xd,-0.03) {$x_4$};

\node[above] at (\xa,1.00) {$H_1$};
\node[above] at (\xb,1.00) {$H_2$};
\node[above] at (\xc,1.00) {$H_3$};
\node[above] at (\xd,1.00) {$H_4$};

\fill[gray!15] (1.28,0.00) rectangle (5.42,0.10);
\draw[dashed] (1.28,0.00) rectangle (5.42,0.10);
\foreach \x in {\xa,\xb,\xc,\xd}{
  \fill[gray!15] (\x,0.15) circle (4pt);
}

\fill[blue!12] (2.48,0.20) rectangle (4.22,0.27);
\draw[dashed,blue!70!black] (2.48,0.20) rectangle (4.22,0.27);

\draw[interval] (\xa,0.00) -- (\xa,0.10);
\node[point] at (\xa,0.15) {};
\draw[interval] (\xa,0.34) -- (\xa,0.40);

\draw[interval] (\xb,0.00) -- (\xb,0.10);
\node[point] at (\xb,0.15) {};
\draw[interval] (\xb,0.20) -- (\xb,0.27);
\draw[interval] (\xb,0.58) -- (\xb,0.63);

\draw[interval] (\xc,0.00) -- (\xc,0.10);
\node[point] at (\xc,0.15) {};
\draw[interval] (\xc,0.20) -- (\xc,0.40);
\draw[interval] (\xc,0.71) -- (\xc,0.75);

\draw[interval] (\xd,0.00) -- (\xd,0.10);
\node[point] at (\xd,0.15) {};
\draw[interval] (\xd,0.46) -- (\xd,0.52);

\node[align=left] at (7.75,0.07) {\(K=[0,0.10]\cup\{0.15\}\)};
\node[align=left] at (7.85,0.235) {\(K_{23}\setminus K=[0.20,0.27]\)};
\node[align=left] at (7.70,-0.01) {\(K\leq^0 H_1,H_2,H_3,H_4\)};

\end{tikzpicture}
\caption{A symmetrically convex hesitant fuzzy set over a finite chain.}
\label{fig:non-typical-symmetric-convexity}
\end{figure}
\end{example}

\begin{definition}\label{definition_simetrico_cuts}
Let \((X,\Gamma)\) be a point-convex segment-generated abstract convexity space, let \(A:X\to P^*([0,1])\), and let \(C\in P^*([0,1])\). The \emph{symmetric principal cut} of \(A\) at level \(C\) is
\[
[A]_C:=\{x\in X:\ C\leq^0 A(x)\}.
\]
\end{definition}

By Theorem~\ref{teo_princ_cuts}, it is clear that symmetric convexity is equivalent to the convexity of all symmetric principal cuts. This gives a direct cut-based interpretation, in which the levels are hesitant values ordered by \(\leq^0\).

\smallskip

The next result records how the symmetric setting restricts to two familiar cases.

\begin{theorem}\label{teo_simetrico_extension}
Let \((X,\Gamma)\) be a point-convex segment-generated abstract convexity space.
\begin{enumerate}
    \item For every \(f:X\to[0,1]\), the singleton-valued mapping \(\hat f:X\to P^*([0,1])\), given by \(\hat f(x)=\{f(x)\}\), is symmetrically convex if and only if \(f\) is min-based fuzzy convex with respect to \(\Gamma\), that is,
    \[
    f(y)\geq \min\{f(x),f(z)\}
    \]
    for all \(x,z\in X\) and every \(y\in[x,z]_{\Gamma}\).
    
    \item Let \(I([0,1])=\{[a,b]:0\le a\le b\le 1\}\). For every mapping \(A:X\to I([0,1])\), symmetric convexity is equivalent to intrinsic convexity with respect to the usual interval order, namely \([a,b]\leq_I[c,d]\) if and only if \(a\le c\) and \(b\le d\). Equivalently,
    \[
    A(y)\geq_I A(x)\wedge_I A(z)
    \]
    for all \(x,z\in X\) and every \(y\in[x,z]_{\Gamma}\), where \(\wedge_I\) denotes the componentwise meet.
\end{enumerate}
\end{theorem}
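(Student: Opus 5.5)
The plan is to reduce both statements to a single order-theoretic fact. The key step is that the two embeddings \(t\mapsto\{t\}\) and \([a,b]\mapsto[a,b]\) are lattice embeddings of \(([0,1],\le)\) and \((I([0,1]),\le_I)\) into \((P^*([0,1]),\le^0)\). Concretely, we must show that these maps preserve and reflect the order, and that the symmetric meet of two elements in the image is again in the image and coincides with \(\min\) or \(\wedge_I\), respectively. Once this is done, both equivalences follow directly from Definition~\ref{definition_symmetric-convex}. Indeed, \(A(y)\ge^0 A(x)\wedge_0 A(z)\) becomes \(A(y)\ge A(x)\wedge A(z)\) in the sublattice, which is intrinsic convexity there. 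For item (1) we then use that intrinsic convexity over the chain \([0,1]\) is exactly the min-based condition.

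For item (1), we compute \(\le^0\) on singletons. If \(s\neq t\), then \(\{s\}\le^0\{t\}\) means \(\{s\}<\{t\}\) and \(\{s\}<\{t\}\), that is, \(s<t\). If \(s=t\), both difference sets are empty. Hence \(\{s\}\le^0\{t\}\iff s\le t\). Since the singletons then form a chain under \(\le^0\), the meet \(\{s\}\wedge_0\{t\}\) is the smaller of the two, namely \(\{\min\{s,t\}\}\). The infimum of two comparable elements is the smaller one in any poset, so no appeal to the decision tree of \citep{JMNS22} is needed. Therefore \(\hat f(y)\ge^0\hat f(x)\wedge_0\hat f(z)\) holds iff \(\{f(y)\}\ge^0\{\min\{f(x),f(z)\}\}\), which holds iff \(f(y)\ge\min\{f(x),f(z)\}\).

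For item (2), we first show that \([a,b]\le^0[c,d]\iff a\le c\) and \(b\le d\), by unpacking \(H<K\setminus H\) and \(H\setminus K<K\) for \(H=[a,b]\) and \(K=[c,d]\).

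\emph{Forward direction.} If \([c,d]\setminus[a,b]\neq\varnothing\), every point of it exceeds every point of \([a,b]\). This excludes \(c<a\): otherwise there would be points of \(K\) below \(a\), or \(c\) itself would lie outside \(H\) and below it. The symmetric argument using \(H\setminus K<K\) excludes \(b>d\). The degenerate cases, where one of the difference sets is empty, give containment. Containment combined with the other condition still forces the inequalities. For example, if \(H\subseteq K\) then \(H<K\setminus H\) forces \(K\setminus H\) to lie entirely above \(b\), so \(c=a\) and \(b\le d\).

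\emph{Backward direction.} If \(a\le c\) and \(b\le d\), then \(K\setminus H\subseteq(b,d]\) and \(H\setminus K\subseteq[a,c)\). Each point of \((b,d]\) exceeds all of \(H\), and each point of \([a,c)\) lies below all of \(K\), so both conditions hold.

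Next, for the meet we show that \([\min\{a,c\},\min\{b,d\}]\) is the \(\le^0\)-infimum of \([a,b]\) and \([c,d]\) in the whole of \(P^*([0,1])\), not merely in \(I([0,1])\). This is the main obstacle: an interval lower bound is clear, but a non-interval lower bound \(G\) must be shown to satisfy \(G\le^0[\min\{a,c\},\min\{b,d\}]\). The first attempt will be to invoke the explicit description of \(\wedge_0\) from \citep{JMNS22}, which is known to restrict to the componentwise interval meet; that paper's point is precisely that the symmetric lattice contains \(I([0,1])\) as a sublattice. Failing that, one can argue directly: let \(G\le^0 H\) and \(G\le^0 K\), and put \(M=[\min\{a,c\},\min\{b,d\}]\). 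Points of \(M\setminus G\) lie in \(H\) or in \(K\), and points of \(G\setminus M\) lie below \(\min\{a,c\}\) or above \(\min\{b,d\}\). Using the two hypotheses, one rules out the bad configurations separately.

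With the order and meet identified, symmetric convexity of \(A:X\to I([0,1])\) reads \(A(y)\ge_I A(x)\wedge_I A(z)\) for all \(x,z\in X\) and every \(y\in[x,z]_{\Gamma}\), which is the claimed equivalence.
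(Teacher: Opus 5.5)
Your proposal is correct and has the same skeleton as the paper's proof: identify the restriction of \(\leq^0\) and \(\wedge_0\) to singletons and to closed intervals, then translate the convexity inequality. The difference is that the paper quotes both restriction facts from \citet{JMNS22}, whereas you derive them from the recalled definition of \(\leq^0\).

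For (1), this makes the argument fully self-contained. Once the singletons form a chain, their infimum in all of \(P^*([0,1])\) is automatic.

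For (2), you correctly isolate the point the citation hides: \(M=[\min\{a,c\},\min\{b,d\}]\) must be the infimum in the whole symmetric lattice, not merely in \(I([0,1])\). Your fallback sketch does close this, in a few lines. Let \(G\le^0 H\) and \(G\le^0 K\).
\begin{itemize}
\item Since \(M\subseteq H\cup K\), every point of \(M\setminus G\) lies in \(H\setminus G\) or in \(K\setminus G\). Hence \(G<M\setminus G\).
\item Take \(g\in G\setminus M\) with \(g>\min\{b,d\}\), say \(g>b\). Then \(g\in G\setminus H\), so \(G\le^0 H\) forces \(g<b\), a contradiction. Points of \(G\setminus M\) below \(\min\{a,c\}\) are harmless, so \(G\setminus M<M\).
\end{itemize}

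Your case split in the forward direction of the order characterization is also unnecessary. If \(c<a\), then \(c\in K\setminus H\) lies below \(a\in H\). If \(b>d\), then \(b\in H\setminus K\) lies above \(d\in K\).

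The paper's route is shorter but rests entirely on the external description of \(\wedge_0\). Yours shows that only the two defining inequalities of \(\leq^0\) are needed to identify both the order and the meet on these two layers.
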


\begin{proof}
\citet{JMNS22} demonstrates that the symmetric lattice restricts to the usual order on singletons and to the usual lattice structure on closed intervals of \([0,1]\). Therefore, on singleton-valued mappings, \(\{a\}\leq^0\{b\}\) is equivalent to \(a\le b\), and \(\{a\}\wedge_0\{b\}=\{\min\{a,b\}\}\). This yields (1). For (2), the argument is analogous: on closed intervals, the restriction of \(\leq^0\) is the usual interval order \(\leq_I\), and the restriction of \(\wedge_0\) is the corresponding interval meet.
\end{proof}

\begin{corollary}\label{cor_symmetric_induced_operators}
Let \(m\geq 2\), let \((X,\Gamma)\) be a point-convex segment-generated abstract convexity space with a nontrivial segment, and let \(T:\bigl(P^*([0,1])\bigr)^m\to P^*([0,1])\) be an \(m\)-ary operator. Then the following statements are equivalent:
\begin{enumerate}
    \item for every family of symmetrically convex HFSs \(A_1,\dots,A_m:X\to P^*([0,1])\), the induced HFS \(T_*(A_1,\dots,A_m)\) is symmetrically convex;
    \item for all \(H_1,\dots,H_m,K_1,\dots,K_m\in P^*([0,1])\), one has
    \[
    T(H_1\wedge_0 K_1,\dots,H_m\wedge_0 K_m)
    =
    T(H_1,\dots,H_m)\wedge_0 T(K_1,\dots,K_m).
    \]
\end{enumerate}
\end{corollary}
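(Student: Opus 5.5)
This corollary is a direct specialization of Theorem~\ref{teo_sharp_induced}. I will take \(L=P^*([0,1])\) with the symmetric lattice structure \((P^*([0,1]),\leq^0,\wedge_0,\vee_0)\) of \citet{JMNS22}. Since Theorem~\ref{teo_sharp_induced} requires only that \(L\) be a lattice, with no completeness or distributivity assumption, the first step is simply to recall that this structure is a lattice. This is exactly the result of \citet{JMNS22} already used in this section: every pair \(H,K\) has an infimum \(H\wedge_0 K\) and a supremum \(H\vee_0 K\) with respect to \(\leq^0\).

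Next, I will match the notions. By Definition~\ref{definition_symmetric-convex}, symmetric convexity of an HFS \(A:X\to P^*([0,1])\) is, word for word, intrinsic convexity (Definition~\ref{defintrinsic}) for the lattice \(L=P^*([0,1])\). The induced HFS \(T_*(A_1,\dots,A_m)\) is the operator induced by \(T\) in the sense of Definition~\ref{def_induced_operator}. So statement (1) of the corollary is literally statement (1) of Theorem~\ref{teo_sharp_induced} for this \(L\). Likewise, statement (2) of the corollary is statement (2) of that theorem with \(\wedge\) replaced by \(\wedge_0\), and with \(a_s,b_s\) renamed \(H_s,K_s\).

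Finally, the hypotheses on the domain carry over unchanged: \(m\geq 2\), and \((X,\Gamma)\) is point-convex, segment-generated, and has a nontrivial segment. Theorem~\ref{teo_sharp_induced} therefore yields the equivalence directly.

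There is no real obstacle here. The only point needing care is that the lattice property of \(\leq^0\) on all of \(P^*([0,1])\), and not merely on \(F^*([0,1])\), is supplied by \citet{JMNS22}, so that \(\wedge_0\) is genuinely the meet. This matters because the constructions in the proof of Theorem~\ref{teo_sharp_induced} take values \(a_s\wedge b_s\), and these must lie in \(L\). Since \(P^*([0,1])\) is closed under \(\wedge_0\), this holds.
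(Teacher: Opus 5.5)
Your proposal is correct and matches the paper's proof, which consists of the single observation that this is Theorem~\ref{teo_sharp_induced} applied to the symmetric lattice \(\bigl(P^*([0,1]),\leq^0,\wedge_0,\vee_0\bigr)\). Your remark that the lattice property on all of \(P^*([0,1])\) is needed, so that the values \(H_s\wedge_0 K_s\) used in the constructions remain in the codomain, is a reasonable point to make explicit.
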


\begin{proof}
This is Theorem~\ref{teo_sharp_induced} applied to the symmetric lattice \(\bigl(P^*([0,1]),\leq^0,\wedge_0,\vee_0\bigr)\).
\end{proof}

\begin{remark}\label{rem:scalar-reconstruction-layers}
Theorem~\ref{teo_simetrico_extension} clarifies the scalar complexity of the lower layers of the symmetric lattice. On singleton-valued profiles, symmetric convexity is completely reconstructed by the single observable \(\sigma(\{t\})=t\), \(t\in[0,1]\). Indeed, this observable separates the usual order on singleton values and preserves finite meets as minima. Likewise, on interval-valued profiles, symmetric convexity is completely reconstructed by the two endpoint observables \(\ell([a,b])=a\) and \(u([a,b])=b\). These observables separate the interval order \(\leq_I\) and satisfy
\(\ell(I\wedge_I J)=\min\{\ell(I),\ell(J)\}\) and
\(u(I\wedge_I J)=\min\{u(I),u(J)\}\).
Thus, finite scalar reconstruction is available on both the singleton-valued and interval-valued layers of the symmetric lattice.

\end{remark}

The next subsection shows that this finite scalar reconstruction does not extend to the genuinely hesitant regime.


\subsection{Finite scalar non-representability of the symmetric order}

\smallskip

The reconstruction results in Section~\ref{section:bridges} showed that scalar observables reconstruct intrinsic convexity only under strong order-separation and meet-compatibility requirements. We now show that, in the symmetric hesitant setting, such a reconstruction cannot be achieved globally by any finite family of scalar observables.

\smallskip

For \(n\ge 2\), let \(\mathcal I_n\) denote the set of all integer intervals \([i,j]\), with \(1\le i\le j\le n\), endowed with the order
\[
[i,j]\preceq_{\mathcal I}[k,\ell]
\Longleftrightarrow
[i,j]=[k,\ell]\ \text{or}\ j<k.
\]
This poset contains, as an induced subposet, the canonical interval order on the non-degenerate integer intervals \(1\le i<j\le n\).

\smallskip

Recall that an order-embedding of a poset \((P,\le_P)\) into a poset \((Q,\le_Q)\) is an injective map \(\Phi:P\to Q\) such that, for every \(x,y\in P\),
\[
x\le_P y
\Longleftrightarrow
\Phi(x)\le_Q\Phi(y).
\]

In view of Corollary~\ref{cor_dimension_obstruction}, it is enough to show that the symmetric lattice contains finite subposets of arbitrarily large order dimension. We do this by embedding canonical interval orders into \((F^*([0,1]),\leq^0)\). In fact, the construction below takes values in the fixed-cardinality subclass of two-point THFEs.

\begin{proposition}\label{prop_interval_embedding_final}
For every \(n\ge 2\), \((\mathcal I_n,\preceq_{\mathcal I})\) order-embeds into \(\bigl(F^*([0,1]),\leq^0\bigr)\).
\end{proposition}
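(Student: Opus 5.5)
The plan is to realise each integer interval \([i,j]\) as a two-point THFE whose smaller element encodes the left endpoint \(i\) and whose larger element encodes the right endpoint \(j\). The positions are interleaved so that strict precedence \(j<k\) becomes the statement that the whole of one set lies below the whole of the other. Concretely, put \(N:=2n+2\) and define
\[
\Phi([i,j]):=\Bigl\{\tfrac{2i}{N},\ \tfrac{2j+1}{N}\Bigr\}\in F^*([0,1]).
\]
Write \(p_i:=2i/N\) for the left points and \(q_j:=(2j+1)/N\) for the right points. Since \(i\le j\), we have \(p_i<q_j\), so \(\Phi([i,j])\) really has two points. Injectivity is immediate, because \(i\) and \(j\) can be read off from the even and odd numerators. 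The key arithmetic fact is
\[
q_j<p_k \iff 2j+1<2k \iff j<k .
\]

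First, I would record what \(\leq^0\) says on two-point sets. If \(H=\{h_1<h_2\}\) and \(K=\{k_1<k_2\}\) are disjoint, then \(H\setminus K=H\) and \(K\setminus H=K\). The two defining conditions \(H<K\setminus H\) and \(H\setminus K<K\) then both reduce to \(h_2<k_1\). Now take distinct intervals \([i,j]\ne[k,\ell]\) whose images are disjoint. Then \(\Phi([i,j])\leq^0\Phi([k,\ell])\) holds iff \(q_j<p_k\), which by the fact above holds iff \(j<k\), which is exactly \([i,j]\preceq_{\mathcal I}[k,\ell]\). Reflexivity takes care of equal intervals.

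The main obstacle is the case of distinct intervals whose images overlap. Since the left points (even numerators) and right points (odd numerators) never coincide, an overlap means the intervals share exactly one endpoint, either \(i=k\) or \(j=\ell\). In both situations \([i,j]\) and \([k,\ell]\) are incomparable in \(\preceq_{\mathcal I}\): for instance \(i=k\) together with \(j<k\) would contradict \(i\le j\). So I must show that the images are also \(\leq^0\)-incomparable.

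For the shared-left-point case, write \(H=\{a,b\}\) and \(K=\{a,c\}\) with \(a<b\), \(a<c\) and \(b\ne c\). Then \(H\setminus K=\{b\}\), and the condition \(\{b\}<K\) would force \(b<a\), which is impossible. By symmetry \(K\leq^0 H\) fails as well.

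For the shared-right-point case, write \(H=\{a,c\}\) and \(K=\{b,c\}\) with \(a,b<c\) and \(a\ne b\). Then \(K\setminus H=\{b\}\), and \(H<\{b\}\) would force \(c<b\), which is again impossible. The symmetric argument excludes \(K\leq^0 H\).

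Combining the disjoint case, the two overlap cases and reflexivity yields \(x\preceq_{\mathcal I}y\iff\Phi(x)\leq^0\Phi(y)\) for all \(x,y\in\mathcal I_n\). Hence \(\Phi\) is an order-embedding, and its values lie in the class of two-point THFEs, as announced before the proposition.
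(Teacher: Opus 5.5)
Your proof is correct. It shares the paper's basic layout: low points encode left endpoints, high points encode right endpoints, and they are interleaved so that $j<k$ becomes ``every element of one set lies below every element of the other''. It organizes the verification differently, though. The paper chooses a separate pair $p_{ij}\in L_i$, $q_{ij}\in R_j$ for each interval, with all points globally distinct. Distinct intervals then always have disjoint images, and $\leq^0$ collapses uniformly to $\Phi([i,j])<\Phi([k,\ell])$. No overlap analysis is needed, at the price of a non-canonical choice of $n(n+1)$ points. You instead use a canonical encoding that depends only on the endpoints, with $2n$ explicit rationals. This forces the images of intervals sharing an endpoint to overlap, and you check directly from the definition of $\leq^0$ that such images are incomparable. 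That check is sound, and your parity separation of left and right points is what makes it work. It excludes the overlap pattern $\max H=\min K$, e.g.\ $\{a,b\}$ and $\{b,c\}$ with $a<b<c$. In that pattern $H\leq^0 K$ does hold, so a coincidence between a right point and a left point could create comparabilities absent from $(\mathcal I_n,\preceq_{\mathcal I})$. A minor omission: you verify $\preceq_{\mathcal I}$-incomparability of intervals sharing an endpoint in only one of the four directions, though the other three are equally immediate.
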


\begin{proof}
Choose pairwise disjoint open intervals \(B_1,\dots,B_n\subset(0,1)\) such that every element of \(B_i\) is smaller than every element of \(B_j\) whenever \(i<j\). For each \(t\in\{1,\dots,n\}\), choose \(c_t\in B_t\), and define
\[
L_t:=B_t\cap(-\infty,c_t),
\qquad
R_t:=B_t\cap(c_t,\infty).
\]
Then \(L_t\neq\varnothing\), \(R_t\neq\varnothing\), and every element of \(L_t\) is smaller than every element of \(R_t\). Moreover, if \(i<j\), then every element of \(R_i\) is smaller than every element of \(L_j\).

Since \(\mathcal I_n\) is finite, for each \([i,j]\in\mathcal I_n\) we may choose points \(p_{ij}\in L_i\) and \(q_{ij}\in R_j\) in such a way that all chosen points are globally distinct. Define \(\Phi:\mathcal I_n\to F^*([0,1])\) by
\[
\Phi([i,j]):=\{p_{ij},q_{ij}\}.
\]
Then \(\Phi\) is injective, and distinct intervals have disjoint images.

We show that \([i,j]\preceq_{\mathcal I}[k,\ell]\) if and only if \(\Phi([i,j])\leq^0\Phi([k,\ell])\). The case \([i,j]=[k,\ell]\) is immediate. Assume then that \([i,j]\neq[k,\ell]\). Since the two images are disjoint, the defining conditions of \(\leq^0\) reduce to
\[
\Phi([i,j])\leq^0\Phi([k,\ell])
\quad\Longleftrightarrow\quad
\Phi([i,j])<\Phi([k,\ell]).
\]

If \([i,j]\preceq_{\mathcal I}[k,\ell]\), then \(j<k\). Since \(i\le j<k\le \ell\), every element of \(\Phi([i,j])\) is smaller than every element of \(\Phi([k,\ell])\). Hence \(\Phi([i,j])<\Phi([k,\ell])\), and therefore \(\Phi([i,j])\leq^0\Phi([k,\ell])\).

Conversely, assume that \(\Phi([i,j])\leq^0\Phi([k,\ell])\). Since the images are distinct and disjoint, we have \(\Phi([i,j])<\Phi([k,\ell])\). In particular, \(q_{ij}<p_{k\ell}\). We claim that \(j<k\). If \(j=k\), then \(q_{ij}\in R_j\) and \(p_{k\ell}=p_{j\ell}\in L_j\), contradicting that every element of \(L_j\) is smaller than every element of \(R_j\). If \(j>k\), then \(k<j\), and the ordering of the intervals \(B_1,\dots,B_n\) gives \(p_{k\ell}<q_{ij}\), again a contradiction. Therefore \(j<k\), and hence \([i,j]\preceq_{\mathcal I}[k,\ell]\).

Thus \(\Phi\) is an order-embedding.
\end{proof}

\begin{remark}\label{rem:two-point-obstruction}
The embedding constructed in Proposition~\ref{prop_interval_embedding_final} takes values in the subclass \(F^*_2([0,1]):=\{H\in F^*([0,1]) : \#H=2\}\). Hence the order-theoretic obstruction established below does not rely on infinite hesitant values or on unbounded local cardinalities. It already appears within pair-valued THFEs.
\end{remark}

Recall that the order dimension \(\dim(P)\) of a poset \(P\) is the least cardinal number \(d\) such that \(P\) can be order-embedded into a product of \(d\) chains.

\begin{lemma}\label{lema_finite_scores}
Let \(P\) be a poset. If there exist scalar observables \(\varphi_1,\dots,\varphi_m:P\to[0,1]\) such that
\[
x\le_P y
\Longleftrightarrow
\varphi_k(x)\le \varphi_k(y)
\quad\text{for all }k=1,\dots,m,
\]
then \(\dim(P)\le m\).
\end{lemma}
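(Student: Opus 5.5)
The plan is to write down the obvious candidate embedding and check it against the definition of order dimension given just before the statement. Define \(\Psi:P\to[0,1]^m\) by \(\Psi(x):=(\varphi_1(x),\dots,\varphi_m(x))\), and equip \([0,1]^m\) with the product order. This is the product of \(m\) copies of the chain \(([0,1],\le)\). The hypothesis says exactly that
\[
x\le_P y\iff \Psi(x)\le\Psi(y)
\]
in the product order. So \(\Psi\) preserves and reflects the order.

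Next I will check injectivity, which the paper's definition of order-embedding demands explicitly. Suppose \(\Psi(x)=\Psi(y)\). Then \(\Psi(x)\le\Psi(y)\) and \(\Psi(y)\le\Psi(x)\), so the hypothesis gives \(x\le_P y\) and \(y\le_P x\). Antisymmetry of \(\le_P\) then yields \(x=y\). Hence \(\Psi\) is an order-embedding of \(P\) into a product of \(m\) chains.

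Since \(\dim(P)\) is the least cardinal \(d\) for which such an embedding into \(d\) chains exists, it follows at once that \(\dim(P)\le m\).

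One point needs a remark. Some texts define dimension through realizers, i.e.\ families of linear extensions, rather than through embeddings into products of chains. The paper, however, defines it via products of chains, so nothing further is required here. If one wanted the realizer formulation, the standard equivalence would convert each chain coordinate into a linear extension by breaking ties lexicographically. The main obstacle is essentially nil; the only care needed is the injectivity check, and that follows from antisymmetry.
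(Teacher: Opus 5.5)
Your proof is correct and follows the same route as the paper: the map \(x\mapsto(\varphi_1(x),\dots,\varphi_m(x))\) is an order-embedding of \(P\) into the product of \(m\) copies of the chain \([0,1]\), so \(\dim(P)\le m\). Your explicit injectivity check via antisymmetry is a detail the paper leaves implicit, and you handle it correctly.
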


\begin{proof}
The map
\(
x\mapsto(\varphi_1(x),\dots,\varphi_m(x))
\)
is an order-embedding of \(P\) into a product of \(m\) chains. Hence \(\dim(P)\le m\).
\end{proof}

\begin{lemma}\label{cor_two_point_no_finite_representation}
The poset \(\bigl(F^*_2([0,1]),\leq^0\bigr)\) contains finite subposets of arbitrarily large order dimension. Consequently, there is no finite family of scalar observables \(\psi_1,\dots,\psi_m:F^*_2([0,1])\to[0,1]\) such that \(H\leq^0 K\) if and only if \(\psi_k(H)\leq\psi_k(K)\) for all \(k=1,\dots,m\).
\end{lemma}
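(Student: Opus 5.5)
The proof combines three ingredients: Proposition~\ref{prop_interval_embedding_final}, Remark~\ref{rem:two-point-obstruction}, and the classical fact that interval orders can have arbitrarily large order dimension. I would first restrict $\mathcal I_n$ to its induced subposet $\mathcal J_n$ of non-degenerate integer intervals $[i,j]$ with $1\le i<j\le n$, ordered by $[i,j]<[k,\ell]$ iff $j<k$. This is the canonical interval order, and the text above notes that it is an induced subposet of $(\mathcal I_n,\preceq_{\mathcal I})$. The embedding $\Phi$ of Proposition~\ref{prop_interval_embedding_final} takes values in $F^*_2([0,1])$. Its restriction to $\mathcal J_n$ is therefore an order-embedding of $\mathcal J_n$ into $(F^*_2([0,1]),\leq^0)$. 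The image $\Phi(\mathcal J_n)$, with the induced order, is a finite subposet isomorphic to $\mathcal J_n$, so $\dim\Phi(\mathcal J_n)=\dim\mathcal J_n$.

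The key step is to show $\dim\mathcal J_n\to\infty$. I would cite the known result (Bogart, Rabinovitch, Trotter; also Füredi--Hajnal--Rödl--Trotter) that the canonical interval order on $[n]$ has dimension of order $\lg\lg n$, in particular unbounded. If a self-contained argument is preferred, I would give the Ramsey-type lower bound. Suppose $\mathcal J_n$ embeds into a product of $d$ chains, so that $\mathcal J_n$ is realized by $d$ linear extensions $\le_1,\dots,\le_d$. Color each triple $a<b<c$ in $[n]$ by the vector recording, for each $k$, whether $[a,b]\le_k[b,c]$ holds reversed, that is, whether $[b,c]<_k[a,b]$. The intervals $[a,b]$ and $[b,c]$ are incomparable in $\mathcal J_n$ since $b\not<b$, so some extension must place $[b,c]$ below $[a,b]$.

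Applying the hypergraph Ramsey theorem for $d$ fixed and $n$ large, I obtain four points $a<b<c<e$ whose triples $(a,b,c)$ and $(b,c,e)$ receive the same color. Choose $k$ with $[b,c]<_k[a,b]$. Homogeneity then also gives $[c,e]<_k[b,c]$, hence $[c,e]<_k[a,b]$. But $b<c$ forces $[a,b]<[c,e]$ in $\mathcal J_n$, and $\le_k$ is a linear extension, which is a contradiction. Hence $\dim\mathcal J_n>d$ for large $n$. Getting this step cleanly is the only real obstacle, and I would present it as a citation with this sketch.

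For the second sentence, suppose $\psi_1,\dots,\psi_m$ separated $\leq^0$ on $F^*_2([0,1])$. Their restrictions would separate the induced order on every finite subposet. Lemma~\ref{lema_finite_scores} would then bound the dimension of every such subposet by $m$. Choosing $n$ with $\dim\mathcal J_n>m$ contradicts the first part.
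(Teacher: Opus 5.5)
Your proposal is correct and takes essentially the same route as the paper: restrict the two-point embedding of Proposition~\ref{prop_interval_embedding_final} to the canonical interval order $\mathcal J_n$, use that $\dim(\mathcal J_n)\to\infty$ (the paper simply cites \citet[Corollary~5.2]{FurediHajnalRodlTrotter1992}), and conclude with Lemma~\ref{lema_finite_scores}. Your optional Ramsey sketch is also sound: a monochromatic $4$-set $a<b<c<e$ for the $2^d$-colouring of triples gives $[c,e]<_k[b,c]<_k[a,b]$, which contradicts $[a,b]<[c,e]$. This makes the argument self-contained, at the cost of yielding only unboundedness rather than the $\lg\lg n$ asymptotics, which the lemma does not need.
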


\begin{proof}
By Proposition~\ref{prop_interval_embedding_final} and Remark~\ref{rem:two-point-obstruction}, for every \(n\ge 2\), the poset \((\mathcal I_n,\preceq_{\mathcal I})\) order-embeds into \(\bigl(F^*_2([0,1]),\leq^0\bigr)\).

Let \(\mathcal J_n=\{[i,j]\in\mathcal I_n: i<j\}\). This is the canonical interval order on the non-degenerate integer intervals. Since \(\mathcal J_n\) is an induced subposet of \(\mathcal I_n\), it also order-embeds into \(\bigl(F^*_2([0,1]),\leq^0\bigr)\).

By \citet[Corollary~5.2]{FurediHajnalRodlTrotter1992}, the order dimension of these canonical interval orders satisfies
\[
\dim(\mathcal J_n)
=
\lg\lg n
+
\left(\frac{1}{2}+o(1)\right)\lg\lg\lg n,
\]
where \(\lg\) denotes the binary logarithm. In particular, \(\dim(\mathcal J_n)\to\infty\) as \(n\to\infty\). Therefore \(\bigl(F^*_2([0,1]),\leq^0\bigr)\) contains finite subposets of arbitrarily large order dimension.

If a finite family \(\psi_1,\dots,\psi_m\) represented the restriction of \(\leq^0\) to \(F^*_2([0,1])\), then Lemma~\ref{lema_finite_scores} would imply that \(\dim\bigl(F^*_2([0,1]),\leq^0\bigr)\leq m\), and hence every finite subposet of \(F^*_2([0,1])\) would have order dimension at most \(m\). This contradicts the previous paragraph.
\end{proof}

\begin{theorem}\label{teo_no_global}
There is no finite family of scalar observables \(\varphi_1,\dots,\varphi_m:F^*([0,1])\to[0,1]\) such that
\[
H\leq^0 K
\Longleftrightarrow
\varphi_k(H)\leq \varphi_k(K)
\quad\text{for all }k=1,\dots,m.
\]
\end{theorem}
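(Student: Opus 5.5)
The plan is to argue by contradiction, restricting a hypothetical finite separating family from \(F^*([0,1])\) to the two-point subclass \(F^*_2([0,1])\) and then invoking Lemma~\ref{cor_two_point_no_finite_representation}. Suppose that observables \(\varphi_1,\dots,\varphi_m:F^*([0,1])\to[0,1]\) satisfy
\[
H\leq^0 K \Longleftrightarrow \varphi_k(H)\le\varphi_k(K)\ \text{for all }k=1,\dots,m
\]
for all \(H,K\in F^*([0,1])\).

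Since \(F^*_2([0,1])\subseteq F^*([0,1])\) and the order on \(F^*_2([0,1])\) is the restriction of \(\leq^0\), the equivalence holds in particular for all \(H,K\in F^*_2([0,1])\). Now set \(\psi_k:=\varphi_k|_{F^*_2([0,1])}\) for \(k=1,\dots,m\). This gives a finite family of scalar observables on \(F^*_2([0,1])\) that represents the restriction of \(\leq^0\), which is exactly what Lemma~\ref{cor_two_point_no_finite_representation} rules out. That contradiction proves the theorem.

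As an alternative route, one could apply Lemma~\ref{lema_finite_scores} directly to \(P=(F^*([0,1]),\leq^0)\), which would give \(\dim(P)\le m\). Since order dimension is monotone under passing to induced subposets, every finite subposet would then have dimension at most \(m\). This contradicts the interval-order embeddings of Proposition~\ref{prop_interval_embedding_final} combined with the growth \(\dim(\mathcal J_n)\to\infty\) used in the proof of Lemma~\ref{cor_two_point_no_finite_representation}.

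There is no real obstacle here: all the substantive work, namely the embedding and the dimension estimate, was done earlier. The only point to state explicitly is that restricting the observables preserves the biconditional, since the order on the subclass is the induced order.
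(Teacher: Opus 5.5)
Your proposal is correct and matches the paper's proof, which likewise restricts the hypothetical family to \(F^*_2([0,1])\) and derives a contradiction with Lemma~\ref{cor_two_point_no_finite_representation}. Your alternative route, applying Lemma~\ref{lema_finite_scores} to all of \(F^*([0,1])\), is also valid; it simply reproduces the argument inside that lemma's proof.
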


\begin{proof}
Otherwise, restricting such a family to \(F^*_2([0,1])\subseteq F^*([0,1])\) would contradict Lemma~\ref{cor_two_point_no_finite_representation}. 
\end{proof}

\begin{corollary}\label{corolario_no_global}
There is no finite family of observables \(\varphi_1,\dots,\varphi_m:P^*([0,1])\to[0,1]\) that represents the symmetric order \(\leq^0\) on all of \(P^*([0,1])\).
\end{corollary}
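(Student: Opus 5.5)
The plan is to argue by contradiction and reduce to Theorem~\ref{teo_no_global}, exactly as that theorem was reduced to Lemma~\ref{cor_two_point_no_finite_representation}. Suppose \(\varphi_1,\dots,\varphi_m:P^*([0,1])\to[0,1]\) represent \(\leq^0\) on all of \(P^*([0,1])\), that is,
\[
H\leq^0 K \iff \varphi_k(H)\le\varphi_k(K)\ \text{for all }k=1,\dots,m,
\]
for all \(H,K\in P^*([0,1])\). Since \(F^*([0,1])\subseteq P^*([0,1])\), I would consider the restrictions \(\varphi_k|_{F^*([0,1])}\).

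The key observation is that the relation \(\leq^0\) on \(F^*([0,1])\) is simply the restriction of the symmetric order on \(P^*([0,1])\). Indeed, the defining conditions \(H<K\setminus H\) and \(H\setminus K<K\) depend only on the two sets \(H,K\). Hence the displayed equivalence holds in particular for all \(H,K\in F^*([0,1])\). The restricted family is therefore a finite family of scalar observables representing \(\bigl(F^*([0,1]),\leq^0\bigr)\), which contradicts Theorem~\ref{teo_no_global}.

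Alternatively, one could go directly through Lemma~\ref{lema_finite_scores}: the map \(H\mapsto(\varphi_1(H),\dots,\varphi_m(H))\) would order-embed \(P^*([0,1])\) into \([0,1]^m\). This would bound by \(m\) the dimension of every subposet of \(P^*([0,1])\), including the images of \(\mathcal J_n\) from Proposition~\ref{prop_interval_embedding_final}, whose dimension is unbounded.

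There is no real obstacle here. The only point to state explicitly is that the order on the subclass is the induced one, so that representability passes to subsets. This is immediate from the set-theoretic definition of \(\leq^0\) and does not require that the lattice operations \(\wedge_0,\vee_0\) of \(P^*([0,1])\) preserve finiteness.
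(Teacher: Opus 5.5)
Your proposal is correct and is essentially the paper's proof: assume such a family exists, restrict it to \(F^*([0,1])\subseteq P^*([0,1])\), and contradict Theorem~\ref{teo_no_global}. The paper leaves implicit that the symmetric order on \(F^*([0,1])\) is the induced one. You state this explicitly, which is a sound addition, and your alternative route through Lemma~\ref{lema_finite_scores} and Proposition~\ref{prop_interval_embedding_final} is also valid.
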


\begin{proof}
Otherwise, restricting such a family to \(F^*([0,1])\) would contradict Theorem~\ref{teo_no_global}.
\end{proof}

\begin{corollary}\label{corolario_no_scalar_convexity_reconstruction}
Let \((X,\Gamma)\) be a point-convex segment-generated abstract convexity space with a nontrivial segment. There is no finite family of observables \(\varphi_1,\dots,\varphi_m:P^*([0,1])\to[0,1]\) such that, for every HFS \(A:X\to P^*([0,1])\), \(A\) is symmetrically convex if and only if \(A\) is \(\varphi_k\)-convex for every \(k=1,\dots,m\).
\end{corollary}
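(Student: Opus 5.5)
The argument is by contradiction, combining the reconstruction criterion of Theorem~\ref{teo_exact_family} with the order-theoretic obstruction of Corollary~\ref{corolario_no_global}. Suppose a finite family \(\varphi_1,\dots,\varphi_m:P^*([0,1])\to[0,1]\) satisfies the stated property: for every HFS \(A:X\to P^*([0,1])\), \(A\) is symmetrically convex if and only if it is \(\varphi_k\)-convex for all \(k\).

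First, observe that symmetric convexity is exactly intrinsic convexity for the lattice \(L=\bigl(P^*([0,1]),\leq^0,\wedge_0,\vee_0\bigr)\) (Definition~\ref{definition_symmetric-convex} is Definition~\ref{defintrinsic} with this choice of \(L\)). The assumed property is therefore statement (1) of Theorem~\ref{teo_exact_family}, applied to the family \(\Phi=\{\varphi_k\}_{k=1}^m\). Since \((X,\Gamma)\) has a nontrivial segment, that theorem yields statement (2). In particular, \(\Phi\) is separating: \(H\leq^0 K\) if and only if \(\varphi_k(H)\le\varphi_k(K)\) for every \(k\). This says precisely that \(\varphi_1,\dots,\varphi_m\) represent \(\leq^0\) on all of \(P^*([0,1])\), which contradicts Corollary~\ref{corolario_no_global}.

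Alternatively, the conclusion follows directly from Corollary~\ref{cor_dimension_obstruction}. Its hypothesis holds because Lemma~\ref{cor_two_point_no_finite_representation} provides finite subposets of \(F^*_2([0,1])\subseteq P^*([0,1])\) of arbitrarily large order dimension, and a subposet of a subposet is a subposet of \(P^*([0,1])\).

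There is no real obstacle. The only point to check is that the hypotheses of Theorem~\ref{teo_exact_family} are met: \(P^*([0,1])\) with \(\leq^0\) is genuinely a lattice, as established in \citep{JMNS22}, and the nontrivial segment is assumed in the statement. The meet-preservation half of statement (2) is not needed; separation alone gives the contradiction.
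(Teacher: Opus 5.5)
Your proposal is correct and follows the paper's proof: assuming such a family exists, Theorem~\ref{teo_exact_family} forces it to separate \(\leq^0\) on \(P^*([0,1])\), which contradicts Corollary~\ref{corolario_no_global}. Your alternative route through Corollary~\ref{cor_dimension_obstruction} is also valid; it is the same argument with the order-dimension step made explicit.
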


\begin{proof}
If such a finite family existed, then Theorem~\ref{teo_exact_family} would imply that it separates the symmetric order \(\leq^0\) on \(P^*([0,1])\). This contradicts Corollary~\ref{corolario_no_global}. Therefore symmetric convexity cannot be reconstructed, on all HFS profiles, by any finite family of scalar observables.
\end{proof}

The previous impossibility result shows that symmetric convexity cannot be reconstructed by a finite family of scalar observables, even without assuming monotonicity. When the prescribed observables are monotone with respect to the symmetric order, this obstruction can be exhibited through an explicit three-point THFS profile.

\begin{corollary}\label{cor_three_point_scalar_failure}
Let \(X=\{x_1<x_2<x_3\}\) be endowed with its usual order convexity, and let \(\varphi_1,\dots,\varphi_m:P^*([0,1])\to[0,1]\) be any finite family of observables that are monotone with respect to \(\leq^0\). Then there exist two-point THFEs \(H,K\in F^*_2([0,1])\) such that the THFS \(A:X\to F^*_2([0,1])\) defined by \(A(x_1)=H\), \(A(x_2)=K\), and \(A(x_3)=H\) is \(\varphi_k\)-convex for every \(k=1,\dots,m\), but it is not symmetrically convex.
\end{corollary}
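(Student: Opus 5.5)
The plan is to reduce the statement to finding a pair \(H,K\in F^*_2([0,1])\) with \(H\not\leq^0 K\) but \(\varphi_k(H)\le\varphi_k(K)\) for every \(k=1,\dots,m\). Given such a pair, consider the profile \(A(x_1)=H\), \(A(x_2)=K\), \(A(x_3)=H\).

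First we check that this profile has the required properties. The only nontrivial segment condition is for \(x_2\in[x_1,x_3]_\Gamma\). Since \(H\wedge_0 H=H\), symmetric convexity at this segment would require \(H\leq^0 K\), which fails. For the remaining segments, the two endpoints either coincide, so point-convexity makes the condition trivial, or are adjacent, so the segment contains only the endpoints. Hence \(A\) is not symmetrically convex.

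On the scalar side, Theorem~\ref{teo_obs_local} shows that \(\varphi_k\)-convexity needs only
\[
\varphi_k(K)\ge\min\{\varphi_k(H),\varphi_k(H)\}=\varphi_k(H)
\]
at the middle point, together with the trivial conditions elsewhere. So the required inequality \(\varphi_k(H)\le\varphi_k(K)\) for all \(k\) is exactly \(\varphi_k\)-convexity of \(A\) for every \(k\).

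The key step is to produce the pair \((H,K)\). We restrict the family to \(F^*_2([0,1])\) and argue by contradiction. Suppose that for all \(H,K\in F^*_2([0,1])\), the condition \(\varphi_k(H)\le\varphi_k(K)\) for all \(k\) implies \(H\leq^0 K\). Monotonicity of each \(\varphi_k\) with respect to \(\leq^0\) supplies the converse implication. The restricted family would then satisfy \(H\leq^0 K\iff \varphi_k(H)\le\varphi_k(K)\) for all \(k\), which contradicts Lemma~\ref{cor_two_point_no_finite_representation}. Hence some pair in \(F^*_2([0,1])\) satisfies all \(m\) scalar inequalities while \(H\not\leq^0 K\).

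The main point needing care is this use of monotonicity. Without it, a failure of separation could come from the other direction, namely \(H\leq^0 K\) with some \(\varphi_k(H)>\varphi_k(K)\), and that kind of pair does not give a scalar-convex profile. Monotonicity excludes this case, so the failure must be of the form we need. We also note that \(H\neq K\) automatically, since \(\leq^0\) is reflexive.
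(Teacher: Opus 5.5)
Your proposal is correct and follows essentially the same route as the paper. Monotonicity forces the representation failure guaranteed by Lemma~\ref{cor_two_point_no_finite_representation} to occur in the reverse direction. The profile \((H,K,H)\) then satisfies every \(\varphi_k\)-test by Theorem~\ref{teo_obs_local}, but fails symmetric convexity because \(H\wedge_0 H=H\not\leq^0 K\).
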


\begin{proof}
By Corollary~\ref{cor_two_point_no_finite_representation}, the restrictions of \(\varphi_1,\dots,\varphi_m\) to \(F^*_2([0,1])\) do not represent the order \(\leq^0\) on this subclass. Since each \(\varphi_k\) is monotone with respect to \(\leq^0\), the implication \(H\leq^0 K\Rightarrow \varphi_k(H)\leq\varphi_k(K)\) always holds for every \(k=1,\dots,m\). Therefore the failure of order representation must occur in the converse direction. Hence there exist \(H,K\in F^*_2([0,1])\) such that \(\varphi_k(H)\leq\varphi_k(K)\) for every \(k=1,\dots,m\), while \(H\not\leq^0 K\).

Define \(A:X\to F^*_2([0,1])\) by \(A(x_1)=H\), \(A(x_2)=K\), and \(A(x_3)=H\). The only nontrivial segment in \(X\) is \(x_2\in[x_1,x_3]_{\Gamma}\). For every \(k=1,\dots,m\), one has \(\varphi_k(A(x_2))=\varphi_k(K)\geq\varphi_k(H)=\min\{\varphi_k(A(x_1)),\varphi_k(A(x_3))\}\). Thus, by Theorem~\ref{teo_obs_local}, \(A\) is \(\varphi_k\)-convex for every \(k=1,\dots,m\).

However, symmetric convexity would require
\(
A(x_1)\wedge_0 A(x_3)
=
H\wedge_0 H
=
H
\leq^0
K
=
A(x_2),
\)
which is false. Therefore \(A\) is not symmetrically convex.
\end{proof}

Corollary~\ref{cor_three_point_scalar_failure} shows that the failure of finite scalar reconstruction can already be detected on a three-point domain and within the fixed-cardinality class of two-point THFEs. Hence the obstruction is not an artefact of infinite HFEs, large profiles, or varying hesitant cardinalities. This global obstruction should not be confused with the finite verifiability of a fixed profile. Indeed, if \(X=\{x_1,\ldots,x_n\}\) is finite, \((X,\Gamma)\) is a point-convex segment-generated abstract convexity space, and \(A:X\to P^*([0,1])\) is an HFS, then Theorem~\ref{teo_finite} shows that \(A\) is symmetrically convex if and only if, for every
\[
C\in\{A(x_i)\wedge_0 A(x_j):1\le i\le j\le n\},
\]
the symmetric principal cut
\[
[A]_C=\{x\in X:C\leq^0 A(x)\}
\]
is convex. Therefore, symmetric convexity of a finite HFS profile can always be verified through at most \(n(n+1)/2\) symmetric principal cuts, although no finite family of scalar observables represents the symmetric order globally.


\section{Conclusions and future research}\label{sec:conclusions}

In this article we have developed a lattice-theoretic framework for convexity on \(L\)-fuzzy sets over point-convex segment-generated abstract convexity spaces. The framework separates the segment structure of the domain from the order-theoretic structure of the codomain. Thus, linearly ordered universes are recovered by taking segments as order intervals, while ordinary segment convexity in vector spaces and non-linearly ordered betweenness structures can be treated within the same formal setting. Within this framework, two complementary perspectives have been considered: an operational perspective, formulated through \((L,\otimes)\)-convexity, and a relational perspective, formulated through preorder-based convexity.

\smallskip

The operational perspective leads naturally to intrinsic lattice convexity, obtained when the codomain operator is the meet. This notion recovers classical min-based fuzzy convexity in \eqref{eq:min-based-fuzzy-convexity}, the lattice-valued meet-based convexity considered by \citet{Maruyama2009} in \eqref{eq:maruyama-l-fuzzy-convexity}, and the meet-based instance of \(F\)-convexity in \eqref{eq:F-convexity} as particular cases. Inside this layer, we have characterized the codomain operators whose induced pointwise operators preserve intrinsic convexity. The resulting condition shows that preservation is governed by compatibility with the lattice meet; in particular, this compatibility already forces monotonicity, while monotonicity alone is not sufficient.

\smallskip

On the other hand, the relational perspective makes explicit the role of order comparisons and scalar observables. Preorder-based convexity was characterized locally, while observable convexity was shown to be a particular case induced by the total preorder associated with a numerical observable. The comparison between intrinsic and observable convexity gives the conditions under which scalar descriptions are faithful. In particular, we have shown that a single observable reconstructs intrinsic convexity only when it preserves finite meets as minima and reflects the underlying order, a requirement that is unavailable in non-chain lattices. More generally, we have demonstrated that the reconstruction by a family of observables requires both meet-compatibility and separation of the codomain order. However, for finite profiles, intrinsic convexity can still be checked through a finite family of principal cuts.

\smallskip

In the final part of the article, we have specialized the framework to HFSs. In this setting, symmetric convexity has been shown to recover the classical min-based fuzzy convexity on singleton-valued profiles and interval-valued convexity on interval-valued profiles. These two lower layers admit finite scalar reconstructions. Indeed, we have demonstrated that one observable suffices in the singleton-valued case, while the two endpoint observables reconstruct the interval-valued case. However, the genuinely hesitant regime behaves differently. By combining a reconstruction theorem for families of observables with an order-dimension obstruction, we have shown that the symmetric order cannot be globally represented by any finite family of scalar observables. This obstruction already appears within the class of two-point THFEs. Consequently, symmetric hesitant convexity cannot, in general, be reconstructed by a finite family of scalar observables. More concretely, every finite family of monotone scalar observables admits a three-point THFS profile that it classifies as convex although symmetric convexity fails. This obstruction may be relevant for the broader development of the theory of HFSs \citep{Rodriguez2016Position}. Indeed, it indicates that future research on this topic should take into account that finite scalar descriptions must be understood only as partial summaries, rather than as globally faithful substitutes for the underlying hesitant structure. This suggests that future work on HFSs should treat finite scalar 
summaries as partial descriptions rather than substitutes for the 
underlying order structure.

\smallskip

As an overview, in Table~\ref{tablageneral} we summarize how the main convexity viewpoints discussed throughout the article fit into the proposed framework and clarify the role played by each of them in the present study.

\begin{table}[htbp]
\centering
\scriptsize
\setlength{\tabcolsep}{3pt}
\renewcommand{\arraystretch}{1.18}
\caption{Main convexity viewpoints in fuzzy-set frameworks and their role in the present article.}
\label{tablageneral}
\begin{tabularx}{\textwidth}{@{}
L{0.17\textwidth}
L{0.29\textwidth}
C{0.15\textwidth}
X
@{}}
\toprule
\textbf{Convexity viewpoint} &
\textbf{Basic principle} &
\textbf{Refs.} &
\textbf{Role in the present article} \\
\midrule

Classical crisp convexity &
A set \(C\) is convex when the segment joining any two of its points remains in \(C\). &
\citep{Klee1971} &
It motivates the segment-based structure of the domain, abstracted here through point-convex segment-generated abstract convexity spaces. \\

\addlinespace

Jensen-type fuzzy convexity &
A fuzzy set \(A:X\to[0,1]\) satisfies a Jensen-type convexity condition when
\(A(\lambda x+(1-\lambda)y)\geq \lambda A(x)+(1-\lambda)A(y)\). &
\citep{AmmarMetz1992,SyauLee2003} &
It represents a stronger arithmetic formulation of fuzzy convexity, based on the concavity of the membership function, and is not generally available in arbitrary lattice-valued codomains. \\

\addlinespace

Classical min-based fuzzy convexity &
Intermediate points have membership at least the minimum of the endpoint values. &
\citep{Zadeh1965,Lowen1980} &
It is recovered when \(L=[0,1]\) and the codomain operation is \(\wedge=\min\). It is also recovered from symmetric convexity on singleton-valued HFSs. \\

\addlinespace

Lattice-valued meet-based convexity &
Membership degrees take values in a lattice, and convexity is expressed through the lattice meet and corresponding level-cut conditions. &
\citep{Maruyama2009,JanisSeseljaTepavcevic2017} &
It provides the direct background for intrinsic convexity. The article extends this viewpoint to segment-generated abstract convexity spaces and uses it as the reference notion for comparison, preservation, and scalar reconstruction. \\

\addlinespace

Operator-based convexity &
The minimum is replaced by a binary operator \(F\), leading to conditions of the form \(A(y)\geq F(A(x),A(z))\). &
\citep{Diaz2017} &
It is generalized through \((L,\otimes)\)-convexity, where the operator acts on values in a partially ordered set or lattice. \\

\addlinespace

Score-, max-, and aggregation-based convexity for THFSs &
Each typical hesitant value is reduced to a scalar summary, such as a score, a maximum, or an aggregation value. &
\citep{RashidBeg2016,JanisMontesRencova2018,Huidobro2021} &
It is reinterpreted as observable convexity. The article clarifies when such scalar descriptions preserve or fail to recover intrinsic lattice information. \\

\addlinespace

Order-based convexity &
Convexity is defined by comparing endpoint and intermediate values through a codomain order. &
\citep{Huidobro2022,Huidobro2025} &
It is recovered through preorder-based convexity and extended from linearly ordered domains to segment-generated abstract convexity spaces. \\

\addlinespace

Symmetric hesitant convexity &
HFSs are treated directly as maps into the symmetric lattice \((P^*([0,1]),\leq^0,\wedge_0,\vee_0)\). &
\citep{JMNS22} &
It is the main hesitant specialization of the framework. The article shows that no finite family of scalar observables can reconstruct symmetric convexity on all HFS profiles; this failure already appears for two-point THFEs. \\

\bottomrule
\end{tabularx}
\end{table}

\smallskip

The results raise several questions for further investigation. First, it would be interesting to further investigate codomain operators satisfying the meet-compatibility condition characterized here, especially for the concrete lattice structures that appear in fuzzy and hesitant fuzzy settings. This may clarify which natural aggregation, conjunction, or fusion mechanisms preserve intrinsic convexity and which ones necessarily destroy it. Another direction could be to relate the present framework with graded convexity structures, such as the general \((L,M)\)-fuzzy convex structures of \citet{ShiXiu2017} and the \(M\)-fuzzifying convexities induced by \(M\)-orders studied by \citet{LiShi2018}. It would also be useful to identify additional geometric assumptions on \((X,\Gamma)\) that lead to stronger preservation or reconstruction results. Finally, the symmetric hesitant setting suggests further questions, including more explicit procedures for handling symmetric principal cuts and the development of applications in decision-making models where preserving hesitant information is preferable to scalar aggregation.


\section*{Code availability}

Not applicable.

\section*{Declaration of competing interest}
 
The authors have no competing interests to declare that are relevant to the content of this article.

\section*{Acknowledgments}
   
Raquel Fernandez-Peralta was funded by the EU NextGenerationEU through the Recovery and Resilience Plan for Slovakia under the project No. 09I03-03-V04-00557. Pedro Huidobro was funded by the Spanish Ministry of Science and Innovation through project MCINN-23-PID2022-139886NB-I00, by the Asturian Agency for Science, Business Competitiveness and Innovation (SEKUENS) under Grant Agreement No. SEK-25-GRU-GIC-24-018, and by project PID2024-155289NB-I00 funded by MICIU/AEI/10.13039/501100011033 and ERDF/EU.



\bibliographystyle{plainnat}

\bibliography{references}

@ARTICLE{Alcantud,
  author  = {Alcantud, J. C. R. and Campi{\'o}n, M. J. and Indur{\'a}in, E. and Mun{\'a}rriz, A.},
  title   = {Scores of hesitant fuzzy elements revisited: `Was sind und was sollen'},
  journal = {Information Sciences},
  volume  = {648},
  year    = {2023},
  pages   = {119500},
  doi     = {10.1016/j.ins.2023.119500}
}

@ARTICLE{AlcantudTorra19,
  author  = {Alcantud, J. C. R. and Torra, V.},
  title   = {Decomposition theorems and extension principles for hesitant fuzzy sets},
  journal = {Information Fusion},
  volume  = {41},
  year    = {2018},
  pages   = {48--56},
  doi     = {10.1016/j.inffus.2017.08.005}
}

@ARTICLE{AndersonBankstonMcCluskey2021,
  author  = {Anderson, D. and Bankston, P. and McCluskey, A.},
  title   = {Convexity in topological betweenness structures},
  journal = {Topology and its Applications},
  volume  = {304},
  year    = {2021},
  pages   = {107783},
  doi     = {10.1016/j.topol.2021.107783}
}

@ARTICLE{Bedregal2014,
  author  = {Bedregal, B. and Reiser, R. H. S. and Bustince, H. and L{\'o}pez-Molina, C. and Torra, V.},
  title   = {Aggregation functions for typical hesitant fuzzy elements and the action of automorphisms},
  journal = {Information Sciences},
  volume  = {255},
  year    = {2014},
  pages   = {82--99},
  doi     = {10.1016/j.ins.2013.08.024}
}

@ARTICLE{Bustince2016,
  author  = {Bustince, H. and Barrenechea, E. and Pagola, M. and Fernandez, J. and Xu, Z. and Bedregal, B.},
  title   = {A historical account of types of fuzzy sets and their relationships},
  journal = {IEEE Transactions on Fuzzy Systems},
  volume  = {24},
  number  = {1},
  year    = {2016},
  pages   = {179--194},
  doi     = {10.1109/TFUZZ.2015.2451692}
}

@ARTICLE{Diaz2017,
  author  = {D{\'i}az, S. and Indur{\'a}in, E. and Jani{\v{s}}, V. and Llinares, J. V. and Montes, S.},
  title   = {Generalized convexities related to aggregation operators of fuzzy sets},
  journal = {Kybernetika},
  volume  = {53},
  number  = {3},
  year    = {2017},
  pages   = {383--393},
  doi     = {10.14736/kyb-2017-3-0383}
}

@ARTICLE{FarberJamison1987,
  author  = {Farber, M. and Jamison, R. E.},
  title   = {On local convexity in graphs},
  journal = {Discrete Mathematics},
  volume  = {66},
  number  = {3},
  year    = {1987},
  pages   = {231--247},
  doi     = {10.1016/0012-365X(87)90099-9}
}

@ARTICLE{Farhadinia2014,
  author  = {Farhadinia, B.},
  title   = {A series of score functions for hesitant fuzzy sets},
  journal = {Information Sciences},
  volume  = {277},
  year    = {2014},
  pages   = {102--110},
  doi     = {10.1016/j.ins.2014.02.009}
}

@INCOLLECTION{FurediHajnalRodlTrotter1992,
  author    = {F{\"u}redi, Z. and Hajnal, P. and R{\"o}dl, V. and Trotter, W. T.},
  title     = {Interval orders and shift graphs},
  booktitle = {Sets, Graphs and Numbers},
  series    = {Colloquia Mathematica Societatis J{\'a}nos Bolyai},
  volume    = {60},
  publisher = {North-Holland},
  address   = {Amsterdam},
  year      = {1992},
  pages     = {297--313}
}

@ARTICLE{Goguen1967,
  author  = {Goguen, J. A.},
  title   = {L-fuzzy sets},
  journal = {Journal of Mathematical Analysis and Applications},
  volume  = {18},
  number  = {1},
  year    = {1967},
  pages   = {145--174},
  doi     = {10.1016/0022-247X(67)90189-8}
}

@ARTICLE{GrattanGuinness1976,
  author  = {Grattan-Guinness, I.},
  title   = {Fuzzy membership mapped onto interval and many-valued quantities},
  journal = {Zeitschrift f{\"u}r Mathematische Logik und Grundlagen der Mathematik},
  volume  = {22},
  number  = {2},
  year    = {1976},
  pages   = {149--160},
  doi     = {10.1002/malq.19760220120}
}

@BOOK{Gratzer2011,
  author    = {Gr{\"a}tzer, G.},
  title     = {Lattice Theory: Foundation},
  publisher = {Birkh{\"a}user Basel},
  address   = {Basel},
  year      = {2011},
  doi       = {10.1007/978-3-0348-0018-1}
}

@ARTICLE{Huidobro2021,
  author  = {Huidobro, P. and Alonso, P. and Jani{\v{s}}, V. and Montes, S.},
  title   = {Convexity of hesitant fuzzy sets based on aggregation functions},
  journal = {Computer Science and Information Systems},
  volume  = {18},
  number  = {1},
  year    = {2021},
  pages   = {213--230},
  doi     = {10.2298/CSIS200428045H}
}

@ARTICLE{Huidobro2022,
  author  = {Huidobro, P. and Alonso, P. and Jani{\v{s}}, V. and Montes, S.},
  title   = {Convexity and level sets for interval-valued fuzzy sets},
  journal = {Fuzzy Optimization and Decision Making},
  volume  = {21},
  year    = {2022},
  pages   = {553--580},
  doi     = {10.1007/s10700-021-09376-7}
}

@ARTICLE{Huidobro2025,
  author  = {Huidobro, P. and Alonso, P. and Jani{\v{s}}, V. and Montes, S.},
  title   = {Convexity of typical hesitant fuzzy sets applied to decision-making problems},
  journal = {Computational and Applied Mathematics},
  volume  = {44},
  year    = {2025},
  pages   = {304},
  doi     = {10.1007/s40314-025-03266-z}
}

@ARTICLE{JanisMontesRencova2018,
  author  = {Jani{\v{s}}, V. and Montes, S. and Ren{\v{c}}ov{\'a}, M.},
  title   = {Convexity of hesitant fuzzy sets},
  journal = {Journal of Intelligent \& Fuzzy Systems},
  volume  = {34},
  number  = {4},
  year    = {2018},
  pages   = {2099--2112},
  doi     = {10.3233/JIFS-162204}
}

@ARTICLE{JMNS22,
  author  = {Jara, P. and Merino, L. and Navarro, G. and Santos, E.},
  title   = {A lattice structure on hesitant fuzzy sets},
  journal = {IEEE Transactions on Fuzzy Systems},
  volume  = {31},
  number  = {6},
  year    = {2023},
  pages   = {2018--2028},
  doi     = {10.1109/TFUZZ.2022.3217400}
}

@ARTICLE{Jahn1975,
  author  = {Jahn, K. U.},
  title   = {Intervall-wertige Mengen},
  journal = {Mathematische Nachrichten},
  volume  = {68},
  number  = {1},
  year    = {1975},
  pages   = {115--132},
  doi     = {10.1002/mana.19750680109}
}

@ARTICLE{LobilloMerinoNavarroSantos2021,
  author  = {Lobillo, F. J. and Merino, L. and Navarro, G. and Santos, E.},
  title   = {Induced triangular norms and negations on bounded lattices},
  journal = {IEEE Transactions on Fuzzy Systems},
  volume  = {29},
  number  = {7},
  year    = {2021},
  pages   = {1802--1814},
  doi     = {10.1109/TFUZZ.2020.2985337}
}

@ARTICLE{Maruyama2009,
  author  = {Maruyama, Y.},
  title   = {Lattice-valued fuzzy convex geometry},
  journal = {RIMS K{\^o}ky{\^u}roku},
  number  = {1641},
  year    = {2009},
  pages   = {22--37}
}

@MISC{MerinoNavarroSalvatierraSantos2026,
  author       = {Merino, L. and Navarro, G. and Salvatierra, C. and Santos, E.},
  title        = {An order-oriented approach to scoring hesitant fuzzy elements},
  howpublished = {arXiv preprint arXiv:2602.16827},
  year         = {2026},
  doi          = {10.48550/arXiv.2602.16827}
}

@ARTICLE{MerinoNavarroSantos2022,
  author  = {Merino, L. and Navarro, G. and Santos, E.},
  title   = {Induced operators on bounded lattices},
  journal = {Information Sciences},
  volume  = {608},
  year    = {2022},
  pages   = {114--136},
  doi     = {10.1016/j.ins.2022.06.033}
}

@ARTICLE{RashidBeg2016,
  author  = {Rashid, T. and Beg, I.},
  title   = {Convex hesitant fuzzy sets},
  journal = {Journal of Intelligent \& Fuzzy Systems},
  volume  = {30},
  number  = {5},
  year    = {2016},
  pages   = {2791--2796},
  doi     = {10.3233/IFS-152057}
}

@ARTICLE{Rodriguez2016Position,
  author  = {Rodr{\'i}guez, R. M. and Bedregal, B. and Bustince, H. and Dong, Y. C. and Farhadinia, B. and Kahraman, C. and Mart{\'i}nez, L. and Torra, V. and Xu, Y. J. and Xu, Z. S. and Herrera, F.},
  title   = {A position and perspective analysis of hesitant fuzzy sets on information fusion in decision making. Towards high quality progress},
  journal = {Information Fusion},
  volume  = {29},
  year    = {2016},
  pages   = {89--97},
  doi     = {10.1016/j.inffus.2015.11.004}
}

@PHDTHESIS{Sambuc75,
  author  = {Sambuc, R.},
  title   = {Fonctions \(\phi\)-floues: Application {\`a} l'aide au diagnostic en pathologie thyro{\"i}dienne},
  school  = {Universit{\'e} de Marseille},
  year    = {1975},
  type    = {Ph.D. Thesis},
  address = {France}
}

@ARTICLE{ShiXiu2017,
  author  = {Shi, F.-G. and Xiu, Z.-Y.},
  title   = {$(L,M)$-fuzzy convex structures},
  journal = {Journal of Nonlinear Sciences and Applications},
  volume  = {10},
  number  = {7},
  year    = {2017},
  pages   = {3655--3669},
  doi     = {10.22436/jnsa.010.07.25}
}

@ARTICLE{Torra2010,
  author  = {Torra, V.},
  title   = {Hesitant fuzzy sets},
  journal = {International Journal of Intelligent Systems},
  volume  = {25},
  number  = {6},
  year    = {2010},
  pages   = {529--539},
  doi     = {10.1002/int.20418}
}

@BOOK{vandeVel1993,
  author    = {van de Vel, M. L. J.},
  title     = {Theory of Convex Structures},
  series    = {North-Holland Mathematical Library},
  volume    = {50},
  publisher = {North-Holland},
  address   = {Amsterdam},
  year      = {1993}
}

@ARTICLE{XuXia2011,
  author  = {Xu, Z. and Xia, M.},
  title   = {Distance and similarity measures for hesitant fuzzy sets},
  journal = {Information Sciences},
  volume  = {181},
  number  = {11},
  year    = {2011},
  pages   = {2128--2138},
  doi     = {10.1016/j.ins.2011.01.028}
}

@ARTICLE{Zadeh1965,
  author  = {Zadeh, L. A.},
  title   = {Fuzzy sets},
  journal = {Information and Control},
  volume  = {8},
  number  = {3},
  year    = {1965},
  pages   = {338--353},
  doi     = {10.1016/S0019-9958(65)90241-X}
}

@ARTICLE{Zadeh1971,
  author  = {Zadeh, L. A.},
  title   = {Quantitative fuzzy semantics},
  journal = {Information Sciences},
  volume  = {3},
  number  = {2},
  year    = {1971},
  pages   = {159--176},
  doi     = {10.1016/S0020-0255(71)80004-X}
}

@ARTICLE{Klee1971,
  author  = {Klee, Victor},
  title   = {What Is a Convex Set?},
  journal = {The American Mathematical Monthly},
  volume  = {78},
  number  = {6},
  pages   = {616--631},
  year    = {1971},
  doi     = {10.1080/00029890.1971.11992815}
}

@ARTICLE{AmmarMetz1992,
  author  = {Ammar, E. E. and Metz, J.},
  title   = {On Fuzzy Convexity and Parametric Fuzzy Optimization},
  journal = {Fuzzy Sets and Systems},
  volume  = {49},
  number  = {2},
  pages   = {135--141},
  year    = {1992},
  doi     = {10.1016/0165-0114(92)90319-Y}
}

@INPROCEEDINGS{SyauLee2003,
  author    = {Syau, Yu-Ru and Lee, E. Stanley},
  title     = {Fuzzy Convexity with Application to Fuzzy Decision Making},
  booktitle = {Proceedings of the 42nd IEEE Conference on Decision and Control},
  pages     = {5221--5226},
  year      = {2003},
  doi       = {10.1109/CDC.2003.1272466}
}

@ARTICLE{JanisSeseljaTepavcevic2017,
  author  = {Jani{\v{s}}, Vladim{\'i}r and {\v{S}}e{\v{s}}elja, Branimir and Tepav{\v{c}}evi{\'c}, Andreja},
  title   = {Poset valued convexities},
  journal = {Information Sciences},
  volume  = {406--407},
  pages   = {208--215},
  year    = {2017},
  doi     = {10.1016/j.ins.2017.04.031}
}

@ARTICLE{Lowen1980,
  author  = {Lowen, Robert},
  title   = {Convex fuzzy sets},
  journal = {Fuzzy Sets and Systems},
  volume  = {3},
  pages   = {291--310},
  year    = {1980},
  doi     = {10.1016/0165-0114(80)90025-1}
}

@ARTICLE{PangShi2017,
  author  = {Pang, Bin and Shi, Fu-Gui},
  title   = {Subcategories of the category of \(L\)-convex spaces},
  journal = {Fuzzy Sets and Systems},
  volume  = {313},
  pages   = {61--74},
  year    = {2017},
  doi     = {10.1016/j.fss.2016.02.014}
}

@ARTICLE{LiShi2018,
  author  = {Li, Erqiang and Shi, Fu-Gui},
  title   = {Some properties of \(M\)-fuzzifying convexities induced by \(M\)-orders},
  journal = {Fuzzy Sets and Systems},
  volume  = {350},
  pages   = {41--54},
  year    = {2018},
  doi     = {10.1016/j.fss.2018.05.008}
}

@ARTICLE{JanisKralRencova2013,
  author  = {Jani{\v{s}}, V. and Kr{\'a}{\v{l}}, P. and Ren{\v{c}}ov{\'a}, M.},
  title   = {Aggregation operators preserving quasiconvexity},
  journal = {Information Sciences},
  volume  = {228},
  year    = {2013},
  pages   = {37--44},
  doi     = {10.1016/j.ins.2012.12.003}
}



\end{document}